 \newcommand{\ba}{\begin{align}}
 \newcommand{\ea}{\end{align}}
 \newcommand{\bal}{\begin{align*}}
 \newcommand{\eal}{\end{align*}}
 \DeclareMathOperator{\diam}{diam}
 \newcommand{\Rm}{\mathbf{Rm}}
 \newcommand{\Rc}{\mathbf{Rc}}
 \newcommand{\Sc}{\mathbf{R}}
 \newcommand{\K}{\mathbf{K}}
 \newcommand{\dvol}{\text{d}V}
\renewcommand{\epsilon}{\varepsilon}
\newcommand{\rank}{\textbf{rank}}
 \def\ExtendSymbol#1#2#3#4#5{\ext@arrow 0099{\arrowfill@#1#2#3}{#4}{#5}}
 \def\ExtendSymbol#1#2#3#4#5{\ext@arrow 0099{\arrowfill@#1#2#3}{#4}{#5}}
\newcommand{\leftrarrows}{\mathrel{\raise.75ex\hbox{\oalign{%
  $\scriptstyle\leftarrow$\cr
  \vrule width0pt height.5ex$\hfil\scriptstyle\relbar$\cr}}}}
\newcommand{\lrightarrows}{\mathrel{\raise.75ex\hbox{\oalign{%
  $\scriptstyle\relbar$\hfil\cr
  $\scriptstyle\vrule width0pt height.5ex\smash\rightarrow$\cr}}}}
\newcommand{\Rrelbar}{\mathrel{\raise.75ex\hbox{\oalign{%
  $\scriptstyle\relbar$\cr
  \vrule width0pt height.5ex$\scriptstyle\relbar$}}}}
\newcommand{\longleftrightarrows}{\leftrarrows\joinrel\Rrelbar\joinrel\lrightarrows}
\def\leftrightarrowsfill@{\arrowfill@\leftrarrows\Rrelbar\lrightarrows}
\newcommand{\xleftrightarrows}[2][]{\ext@arrow 3399\leftrightarrowsfill@{#1}{#2}}
 \definecolor{hao}{rgb}{1,0.5,0}
 \definecolor{miao}{cmyk}{0.5,0,0.2,0.2}
 \definecolor{qiao}{gray}{0.96}
\newtheorem{prop}{Proposition}[section]
\newtheorem{theorem}[prop]{Theorem}
\newtheorem{lemma}[prop]{Lemma}
\newtheorem{claim}[prop]{Claim}
\newtheorem{corollary}[prop]{Corollary}
\newtheorem{definition}[prop]{Definition}
\newtheorem*{theorem*}{Theorem}
\theoremstyle{remark}
\newtheorem{remark}{Remark}
\numberwithin{equation}{section}
\keywords{Collapsing, conformal transformation, infranil fiber bundle,
nilpotency rank, Ricci flow.}
\address{Shaosai Huang, Department of Mathematics, University of Wisconsin -
Madison, 480 Lincoln Drive, Madison, WI, 53706, U.S.A.}
\email{sshuang@math.wisc.edu}
\address{Bing Wang, Institute of Geometry and Physics, and School of
Mathematical Sciences, University of Science and Technology of China, 96
Jinzhai Road, Hefei, Anhui Province, 230026, China}
\email{topspin@ustc.edu.cn}
\title[Local smoothing by Ricci flows]{Ricci flow smoothing for
locally collapsing manifolds}
\author{Shaosai Huang}
\author{Bing Wang}
\date{\today}
\begin{document}
\maketitle

\begin{abstract}
We show that for certain locally collapsing initial data with
Ricci curvature bounded below, one could start the Ricci flow for a
definite period of time. This provides a Ricci flow smoothing tool, with which
we find topological conditions that detect the collapsing infranil fiber
bundles over controlled Riemannian orbifolds among those locally collapsing
regions with Ricci curvature bounded below. In the appendix, we also provide a
local distance distortion estimate for certain Ricci flows with collapsing
initial data.
\end{abstract}
\section{Introduction}

The Ricci flow with initial data $(M,g)$, first introduced by Hamilton
\cite{Hamilton} on $3$-manifolds to deform a given Riemannian metric $g$ with
positive Ricci curvature to a canonical one, is a smooth family of Riemannian
metrics $g(t)$ on $M$ solving the following initial value problem for $t\ge 0$: 
\begin{align}\label{eqn: RF_defn}
\begin{cases}
\partial_tg(t)\ &=\ -2\Rc_{g(t)};\\
g(0)\ &=\ g.
\end{cases}
\end{align} 
Hamilton \cite{Hamilton} shows that if $M$ is a closed manifold, this initial
value problem is always solvable up to a certain time, depending on the initial
data $(M,g)$. In harmonic coordinates, the Ricci flow becomes a non-linear
heat-type equation for the metric tensor, and by the nature of the heat flows,
notably Shi's estimates \cite{Shi}, a key effect of running Ricci flow is that
the evolved metric has much improved regularity:
\begin{align}\label{eqn: Shi_estimate}
\forall l\in \mathbb{N},\ \exists C_l>0,\ \sup_M\left|\nabla^l
\Rm_{g(t)}\right|_{g(t)}\ \le\ C_lt^{-1-l}.
\end{align}
Here the constants $C_l$ depend on the dimension of $M$, as well as 
$\left\|\Rm_g\right\|_{L^{\infty}(M,g)}$. In fact, the finiteness of
$\left\|\Rm_g\right\|_{L^{\infty}(M,g)}$ guarantees the Ricci flow solution to
(\ref{eqn: RF_defn}) to exist for a definite amount of time determined by
its value, even if $(M,g)$ is complete but non-compact.

In view of Shi's estimates (\ref{eqn: Shi_estimate}), the Ricci flow also
becomes a useful tool to smooth a given Riemannian metric by replacing the
initially given metric $g$ with the evolved metric $g(t)$, whose regularity is
controlled by (\ref{eqn: Shi_estimate}). In order to uniformly control the
evolved metric according to (\ref{eqn: Shi_estimate}), a uniform lower bound
of the Ricci flow existence time then becomes crucial.

When the initial data has bounded sectional curvature, notable applications of
the Ricci flow smoothing method include \cite{Fukaya88, Fukaya89, CFG92}, where
the regularity of the smoothing metric plays a key role in understanding the
fine structures of the collapsing geometry with bounded curvature. Imposing
uniform two-sided bounds on the sectional curvature is a rather strong
requirement, and a more general (and natural) situation is to only assume a
uniform Ricci curvature lower bound. In this case, the Ricci flow smoothing
technique usually enables one to obtain Cheeger-Gromov (smooth) convergence to
the regular part of the \emph{non-collapsing} Gromov-Hausdorff (rough) limit
spaces. Applications of such technique have been illustrated by the work
\cite{TianWang} on almost Einstein manifolds, by the work \cite{LS18} on the
Gromov-Hausdorff limits of K\"ahler manifolds with Ricci curvature bounded
below, and by the work \cite{ST17} where the Gromov-Hausdorff limits of
$3$-manifolds with Ricci curvature bounded below is shown to be topological
manifolds, confirming the $3$-dimensional case of a conjecture due to Cheeger
and Colding \cite{ChCoI}.

In all the above mentioned results, the existence time lower bound of Ricci
flows --- which is critical for the smoothing purpose as we have discussed --- 
depends on the uniform volume ratio lower bound of the initial metric:
when the initial volume ratio at some point becomes smaller, the existence time of
the Ricci flow becomes shorter. However, in many natural situations, especially
for the purpose of smoothing a given initial metric by running the Ricci flow,
there may be no uniform volume ratio lower bound to be assumed; and the
purpose of the current paper, sequential to the previous work \cite{Foxy1808},
is then to show that in certain cases when the initial data have a uniform
Ricci curvature lower bound but \emph{without} any uniform volume ratio lower
bound, the Ricci flow could still be started locally for a definite period of
time. Throughout the paper, for a compact subset $K\subset M$ and $R>0$, we
will let $B_g(K,R)$ denote the geodesic $R$-neighborhood of $K$, i.e. 
$B_g(K,R)\ :=\ \left\{x\in M:\ d_g(x,K)<R\right\}$. Our first result concerns
starting Ricci flows with possibly collapsing initial data locally modeled on
Euclidean spaces:
\begin{theorem}\label{thm: main1}
For any $\alpha\in (0,10^{-1})$ and $R\in (0,100)$, there are positive constants
$\delta_{E}(m,R,\alpha)<1$ and $\varepsilon_{E}(m,R,\alpha)<1$ to the following
effect: let $K$ be a compact and connected subset of $(M^m,g)$, an
$m$-dimensional complete Riemannian manifold with $\Rc_g\ge -(m-1)g$, if for
some $\delta\le\delta_{E}$ and any $p\in B_g(K,R)$ it satisfies
\begin{enumerate}
  \item $d_{GH}\left(B(p,10^{-1}R),\mathbb{B}^k(10^{-1}R)\right)<\delta$, and
  \item $\rank\ \tilde{\Gamma}_{\delta}(p)=m-k$,
\end{enumerate}
then there is a Ricci flow solution with initial data
$(B_g(K,\frac{R}{4}),g)$, existing for a period no shorter than
$\varepsilon_{E}^2$, and with curvature control
\begin{align}
\forall t\in (0,\varepsilon_{E}^2],\quad
\sup_{B_g(K,\frac{R}{4})}\left|\Rm_{g(t)}\right|_{g(t)}\ \le\ \alpha t^{-1}
+\varepsilon_{E}^{-2}.
\end{align}
\end{theorem}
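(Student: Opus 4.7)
The strategy is to reduce to the \emph{non-collapsed} local smoothing result already established in the authors' precursor \cite{Foxy1808}, by passing to local universal covers in which the nilpotency-rank hypothesis unfurls every collapsed direction.

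\emph{Step 1: unwrap the collapse.} Fix $p\in B_g(K,R)$. Condition (1) says that a definite-sized ball around $p$ is $\delta$-close in Gromov--Hausdorff distance to $\mathbb{B}^k(R/10)$, so $m-k$ directions are collapsed, while condition (2) requires the nilpotency rank of $\tilde\Gamma_\delta(p)$ (the image in a definite cover of the fundamental group of a tiny ball at $p$) to realize the maximum possible value $m-k$. Combining this with the Kapovitch--Wilking generalized Margulis lemma under $\Rc_g\ge -(m-1)g$, the maximal-rank hypothesis forces the local universal cover $\widetilde B(p,r)$, for some $r$ depending only on $m$ and $R$, to be $\delta'(\delta)$-GH close to the Euclidean ball $\mathbb{B}^m(r)$, with $\delta'(\delta)\to 0$ as $\delta\to 0$. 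The Ricci lower bound lifts.

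\emph{Step 2: smooth on the cover.} On $\widetilde B(p,r)$ the initial datum is non-collapsed, almost Euclidean, and satisfies $\Rc\ge -(m-1)$. For $\delta'$ sufficiently small (hence for $\delta\le\delta_E$ sufficiently small) this is exactly the setting treated in \cite{Foxy1808}, which produces a Ricci flow $\tilde g(t)$ on a slightly smaller cover, defined for $t\in(0,\varepsilon_E^2]$, with the curvature bound
\begin{align*}
\bigl|\Rm_{\tilde g(t)}\bigr|_{\tilde g(t)}\ \le\ \alpha t^{-1}+\varepsilon_E^{-2}.
\end{align*}

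\emph{Step 3: descend and patch.} The lifted initial metric is invariant under the deck group $\tilde\Gamma_\delta(p)$ on $\widetilde B(p,r)$. Because a Ricci flow starting from such invariant data with bounded curvature is unique, $\tilde g(t)$ inherits the deck-invariance and therefore descends to a Ricci flow $g_p(t)$ on a definite neighborhood $U_p\subset M$ of $p$, with the same curvature estimate. Covering the compact set $B_g(K,\tfrac R 4)$ by finitely many such $U_p$ and invoking uniqueness of Ricci flows with bounded curvature on overlaps, the local flows $g_p(t)$ glue into a single Ricci flow on $B_g(K,\tfrac R 4)$ for $t\in(0,\varepsilon_E^2]$.

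\emph{Main obstacle.} The delicate input is Step 1: with only a one-sided Ricci bound and no sectional curvature control, one must show quantitatively that the combination of $\delta$-Euclidean GH approximation and maximal local nilpotency rank forces the cover itself to be $\delta'$-Euclidean, uniformly in $p\in B_g(K,R)$ and with explicit dependence $\delta'(\delta)\to 0$. Once this quantitative unwrapping is in place, Step 2 is a direct citation of \cite{Foxy1808} and Step 3 is a standard equivariance-plus-uniqueness argument; the gluing in Step 3 is routine thanks to the uniform existence time $\varepsilon_E^2$ and the overlap uniqueness.
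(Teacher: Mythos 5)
Your Step 3 is where the argument breaks down. You propose to construct a flow $g_p(t)$ on a neighborhood $U_p$ of each $p$, descended from a flow on a local cover, and then glue these flows on overlaps $U_p\cap U_q$ ``by uniqueness.'' But Ricci flow is a quasi-parabolic PDE with infinite speed of propagation: local (restricted) solutions on incomplete domains are not unique, and the Chen--Zhu uniqueness theorem requires complete time slices with bounded curvature. Two flows $g_p(t)$, $g_q(t)$ on $U_p\cap U_q$ with the same initial data $g$ need not agree, because each one ``sees'' the conformal modification (or whatever boundary-treatment device produced it) outside the overlap. There is no patching lemma here, and no finite-speed-of-propagation principle to appeal to.

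The paper avoids this entirely by never producing more than one flow. It runs a \emph{single} flow on a \emph{single} complete manifold: the pull-back to the universal cover $X_0=\pi^{-1}(B_g(K,\tfrac{3R}{4}))$ of the conformally modified complete metric $h=e^{2f_\theta}g$ of Lemma~\ref{lem: start_RF}. Shi's existence theorem applied to $(X_0,\pi^*h)$ produces a complete Ricci flow $\tilde h(t)$ with bounded curvature on each slice; by the equivariance and Chen--Zhu uniqueness (which now does apply) $\tilde h(t)$ descends to a flow $h(t)$ on $B_g(K,\tfrac{3R}{4})$; and $h(t)$ restricted to $B_g(K,\tfrac R4)$, where $f_\theta\equiv 0$ so that $h\equiv g$, is the sought-after solution. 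No gluing occurs. Your Step 2 also has a gap: you invoke \cite{Foxy1808} for local smoothing of non-collapsed data, but that reference is a distance-distortion estimate, not an existence theorem, and regardless the local cover $\widetilde B(p,r)$ is incomplete so Shi's theorem cannot be applied to it directly. The paper's conformal factor does double duty here: it blows up the boundary to make the (lifted) metric complete, while being engineered (see (\ref{eqn: seeking_beta}), (\ref{eqn: tf_C2}), and the isoperimetric comparison via the metric-equivalence (\ref{eqn: metric_equivalence})) so that the scalar curvature lower bound and the almost-Euclidean isoperimetric constant on $X_0$ survive the modification, which is exactly what Perelman's pseudo-locality (Theorem~\ref{thm: CTY11}) requires. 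Your Step 1 intuition is correct and matches Lemma~\ref{lem: almost_Euclidean}, but the quantitative claim you flag as the ``main obstacle'' is actually only part of the difficulty; the harder engineering is making the complete-manifold setup of Shi and Perelman available without wrecking the isoperimetric estimate.
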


Here $\tilde{\Gamma}_{\delta}(p)$ denotes the
\emph{pseudo-local fundamental group}, and is defined for each $\delta\in (0,1)$
as
\begin{align*}
\tilde{\Gamma}_{\delta}(p)\ :=\ Image[\pi_1(B_g(p,\delta),p)\to
\pi_1(B_g(K,R),p)].
\end{align*}
This concept is introduced in \cite{HW20a} and is originated from the concept
of the \emph{fibered fundamental group} $\Gamma(p):=
Image[\pi_1(B_g(p,\delta),p) \to \pi_1(B_g(p,2),p)]$, which captures all those
loops based at $p$ and contained in $B_g(p,\delta)$, but are allowed to be
deformed (with fixed base point) within $B_g(p,2)$. The fibered fundamental
group has been a crucial concept for our understanding of the local structure
of manifolds with Ricci curvature bounded below. The important work of Kapovitch
and Wilking \cite{KW11} has shown that the fibered fundamental group is almost
nilpotent, and so is the pseudo-local fundamental group, according to the work
of Naber and Zhang \cite{NaberZhang}: see also \cite[Lemma 2.2]{HW20a}. In fact,
the alomst nilpotency of groups of these kinds has been a key property to
investigate even for manifolds with sectional curvature bounded below; see also
the previous works \cite{FY92, KPT10} for some remarkable results.


Our previous result \cite[Theorem 1.4]{HW20a} tells that if a closed
manifold $(M,g)$ with $\Rc_{g}\ge -(m-1)g$ is sufficiently
Gromov-Hausdorff close to some closed manifold $(N,h)$ with
bounded geometry, if $b_1(M)-b_1(N)=\dim M-\dim N$, then the Ricci flow
with initial data $(M,g)$ exists for a definite amount of time, independent of
the volume $|M|_g$. This theorem is global in nature, and sees limited
applications in general settings --- similar issues arise for the work
\cite{DWY96}, where the Ricci flow smoothing is applied to closed manifolds
with bounded Ricci curvature for the first time. 

In contrast, an important feature of Theorem~\ref{thm: main1} is that it is
purely local. Since the assumed Ricci curvature lower bound could be
directly obtained via rescaling, this theorem lends itself as an agile
smoothing tool in various contexts. Also notice that the Assumption (1) in
Theorem~\ref{thm: main1} can be replaced as $B_g(K,R)$ is
$\delta$-Gromov-Hausdorff close to a given lower dimensional smooth manifold
(not necessarily complete) with a uniform lower bound on the
$C^{1,\frac{1}{2}}$ harmonic radius. Moreover, this theorem can be generalized
to the setting where the initial data may locally collapse with only
\emph{scalar curvature} bounded below: see Theorem~\ref{thm: sc}.

If the collapsing limit is a controlled Riemannian orbifold, there are still
certain conditions that guarantee the Ricci flow to exist for a definite period of
time. Our second result concerns starting the Ricci flow on possibly collapsing
initial data locally modeled on flat orbifolds:
 \begin{theorem}\label{thm: main2}
For any $\alpha\in (0,10^{-1})$, $m, l\in \mathbb{N}$ and $R\in (0,100)$, there
are positive constants $\delta_{O}(m,l,R,\alpha)<1$ and
$\varepsilon_{O}(m,\alpha)<1$ to the following effect: let $K$ be a compact and
connected subset of $(M^m,g)$, an $m$-dimensional Riemannian manifold with
$\Rc_g\ge -(m-1)g$, suppose for some $k\le m$ and $\delta\le \delta_O$ it
satisfies for any $p\in B_g(K,R)$ the following assumptions:
\begin{enumerate}
   \item  there are a finite group $G_p<O(k)$ with $\left|G_p\right|\le l$ and a
surjective group homomorphism $\phi_p:\pi_1(B_g(K,R),p)\twoheadrightarrow G_p$,
   \item $d_{GH}\left( B_g(p,4^{-1}R), \mathbb{B}^k(4^{-1}R)\slash
   G_p\right)<\delta$, and
    \item $\rank\ \tilde{\Gamma}_{\delta}(p)=m-k$,
 \end{enumerate}
then there is a Ricci flow solution with initial data $(B_g(K,\frac{R}{4}),g)$,
existing for a period no shorter than $\varepsilon_{O}^2$, and with curvature control 
\begin{align}
\forall t\in (0,\varepsilon_{O}^2],\quad
\sup_{B_g(K,\frac{R}{4})}\left|\Rm_{g(t)}\right|_{g(t)}\ \le\ \alpha t^{-1}
+\varepsilon_{O}^{-2}.
\end{align}
\end{theorem}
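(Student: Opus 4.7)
The plan is to reduce Theorem~\ref{thm: main2} to Theorem~\ref{thm: main1} by passing to a local \emph{unwrapping cover} of uniformly bounded degree on which the flat orbifold model $\mathbb{B}^k(4^{-1}R)\slash G_p$ becomes the Euclidean ball $\mathbb{B}^k(4^{-1}R)$. Such a cover is extracted from the given surjections $\phi_p:\pi_1(B_g(K,R),p)\twoheadrightarrow G_p$: after basepoint transport to a fixed $p_0\in K$, the common core of $\{\ker\phi_p\}_{p\in B_g(K,R)}$ is a normal subgroup of index bounded purely in terms of $l$, and determines a regular covering $\pi:\widehat B\to B_g(K,R)$ on which every local orbifold action is simultaneously trivialised. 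I would first construct a Ricci flow equivariantly on $\widehat B$ via Theorem~\ref{thm: main1} and then descend it to $B_g(K,R/4)$ using uniqueness of Ricci flow with bounded curvature.

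Since $\pi$ is a local isometry, $\Rc_{\hat g}\ge -(m-1)\hat g$ on $\widehat B$ automatically, so it remains to verify the Gromov--Hausdorff and rank hypotheses of Theorem~\ref{thm: main1} at each lift $\hat p$ of each $p\in B_g(K,R)$. For the first, as $\delta\to 0$ the ball $B_g(p,4^{-1}R)$ converges in pointed Gromov--Hausdorff distance to $\mathbb{B}^k(4^{-1}R)\slash G_p$, and stability of this convergence under finite regular covers of bounded order (see \cite{FY92,KW11}) yields
\begin{align*}
d_{GH}\bigl(\widehat B(\hat p,\,4^{-1}R),\,\mathbb{B}^k(4^{-1}R)\bigr)\ <\ \delta'(\delta,l),
\end{align*}
with $\delta'(\delta,l)\to 0$ as $\delta\to 0$. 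For the second, the pseudo-local fundamental group on the cover, namely $\mathrm{Image}\bigl[\pi_1(\widehat B(\hat p,\delta),\hat p)\to\pi_1(\widehat B,\hat p)\bigr]$, equals the intersection of $\tilde\Gamma_\delta(p)$ with the deck subgroup upstairs; since this only removes torsion contributed by the finite orbifold quotient from the almost-nilpotent structure of $\tilde\Gamma_\delta(p)$ (see \cite{NaberZhang} and \cite[Lemma 2.2]{HW20a}), its rank remains $m-k$.

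With both hypotheses verified, Theorem~\ref{thm: main1} produces a Ricci flow $\hat g(t)$ on a neighbourhood of $\pi^{-1}(K)$ in $\widehat B$, defined on $[0,\varepsilon_E^2]$ with $|\Rm_{\hat g(t)}|_{\hat g(t)}\le\alpha t^{-1}+\varepsilon_E^{-2}$. Uniqueness of Ricci flow with bounded curvature forces $\hat g(t)$ to inherit the deck-group symmetry of $\hat g$, and the flow therefore descends to a solution $g(t)$ on $B_g(K,R/4)$ satisfying the claimed curvature estimate; the dependence $\varepsilon_O=\varepsilon_O(m,\alpha)$ independent of $R$ and $l$ is then secured by a preliminary rescaling fixing $R$ to a universal value, with the resulting change in the Ricci lower bound absorbed into the existence time. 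The main obstacle I anticipate is this rank verification on the cover: one must cleanly separate the orbifold torsion carried by the finite quotient from the nilpotent collapsing rank, which requires a quantitative use of Naber--Zhang almost-nilpotency together with a choice of $\delta_O$ so small relative to $\delta_E$ and $l$ that every $\delta_O$-small loop on the base lifts to a $\delta_E$-small loop on the cover; the bookkeeping around these dependencies is where I expect most of the work to reside.
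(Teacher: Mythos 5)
Your route is structurally different from the paper's --- a direct reduction to Theorem~\ref{thm: main1} on a fixed finite cover, versus the paper's contradiction argument with a separate intermediate cover at each contradicting basepoint --- but it conceals exactly where the real work lies, and the step you treat as routine is in fact the crux of the theorem.

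The claim that ``stability of this convergence under finite regular covers of bounded order (see \cite{FY92,KW11}) yields $d_{GH}\bigl(\widehat B(\hat p,4^{-1}R),\mathbb{B}^k(4^{-1}R)\bigr)<\delta'(\delta,l)$'' does not follow from the cited references, which concern fundamental groups and almost-nilpotency rather than Gromov--Hausdorff stability of covers, and it is precisely the difficulty the paper flags in Remark~\ref{rmk: general_orbifold}: the orbifold group $G_p<O(k)$ need not act freely on $\mathbb{S}^{k-1}$, so $\mathbb{B}^k(4^{-1}R)\slash G_p$ can have singular set of codimension one (e.g.\ a reflection quotient, i.e.\ a half-ball, which is simply connected). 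There is consequently no algebraic or covering-space mechanism forcing a finite cover of a space GH-close to $\mathbb{B}^k\slash G_p$ to be GH-close to $\mathbb{B}^k$; one must exploit the metric-measure structure. This is exactly the content of Lemma~\ref{lem: isometry}, whose proof occupies most of Section~3: it shows that a suitable intermediate cover limits onto a non-collapsed $RCD(0,k)$ space with the maximal $k$-dimensional Hausdorff measure, and then invokes the volume-rigidity theorem of De~Philippis--Gigli \cite[Theorem 1.5]{DPG18}, after proving an \emph{ad hoc} segment inequality (Claim~\ref{clm: segment}) and verifying the Bakry--\'Emery condition (Claim~\ref{clm: RCD}). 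Your proposal, as written, assumes the conclusion of Lemma~\ref{lem: isometry}.

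A secondary gap: the assertion that the ``common core of $\{\ker\phi_p\}_{p\in B_g(K,R)}$'' is normal of index bounded purely in terms of $l$ is unsubstantiated. Each individual $\ker\phi_p$ is normal of index at most $l$, but as $p$ varies the homomorphisms $\phi_p$ may differ and their common intersection has no \emph{a priori} index bound in terms of $l$ alone. The paper sidesteps this entirely by arguing by contradiction at a single sequence of basepoints $p_i$, each time passing to the cover determined by $\ker\phi_{p_i}$; no single global cover is ever constructed. Finally, even granting the global cover, $\widehat B$ is an incomplete manifold with no ambient complete space, so Theorem~\ref{thm: main1} cannot be applied as a black box to it --- you would have to re-run the conformal-completion argument upstairs, which is essentially what the paper's proof does directly on the base.
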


Again, here we may say that $B_g(K,R)$ is $\delta$-Gromov-Hausdorff
close to a $k$-dimensional controlled orbifold, in the sense that each
point has its orbifold group of size bounded by $l$ (regular points are thought
of as having the unit-size orbifold group), and that the orbifold covering
metric has a uniform $C^{1,\frac{1}{2}}$ harmonic radius lower bound.
\begin{remark}\label{rmk: general_orbifold}
Notice that the concept of obifold involved here is more general than the usual
definition (see e.g. \cite{BKN89}) in that the action of $G<O(k)$ may
\emph{not} be free on $\mathbb{S}^{k-1}$. By \cite[Theorem 0.5]{Fukaya88}, such
orbifold singularities may occure as collapsing limits. This has become the
major difficulty in our proof of the theorem, which relies on the recent
developments \cite{DPG18} in the theory of $RCD$ spaces introduced by Ambrosio,
Gigli and Savar\'e \cite{AGS14}: see Lemma~\ref{lem: isometry}.
\end{remark}

We also notice that if $l=1$, then Assumption (1) of this theorem is
automatically satisfied --- we are reduced to the situation considered in
Theorem~\ref{thm: main1}. While this theorem is more general than the case of
initial data locally collapsing to the Euclidean model space, we will still
begin with providing a detailed proof of Theorem~\ref{thm: main1} in the next
section. We then discuss the necessary alternations leading to a proof of
Theorem~\ref{thm: main2} in the section that follows. The final section
contains an application of our Ricci flow smoothing results: we will detect the
infranil fiber bundles over controlled Riemannian orbifolds among all those
collapsing manifolds with Ricci curvature bounded below. Here we specify the
concept of controlled orbifold in the following
\begin{definition}
 We say that a metric space $(Z,d_Z)$ is a locally
 $(l,\delta,\bar{\iota})$-controlled $k$-dimensional Riemannian orbifold if
\begin{enumerate}
  \item[(a).] $(Z,d_Z)$ is a $k$-dimensional Riemannian orbifold;
\item[(b).] $\forall z\in Z$, there is a finite group $G_z<O(k)$ of order
not exceeding $l$, such that 
\begin{align*}
d_{GH}\left(B_{d_Z}(z,\bar{\iota}), \mathbb{B}^k(\bar{\iota})\slash
G_z\right)\ <\ \delta.
\end{align*} 
\end{enumerate}
\end{definition} Again, here we notice
that each $G_z<O(k)$ may well have a non-empty fixed point set in
$\mathbb{S}^{k-1}$.
With such definition, we have
\begin{theorem}\label{thm: main3}
Given $m,l\in \mathbb{N}$ and $\bar{\iota}\in (0,1)$, there is a positive
constant $\delta_F(m,l,\bar{\iota})>0$ to the following effect: let $K$ be a
compact and connected subset of $(M^m,g)$, an $m$-dimensional Riemannian
manifold with $\Rc_g\ge -(m-1)g$, suppose for some $k\le m$ and $\delta\le
\delta_F$ it satisfies the following assumptions:
\begin{enumerate}
   \item  there is a locally $(l,\delta,\bar{\iota})$-controlled $k$-dimensional
   Riemannian orbifold $(Z^k,d_Z)$ such that $d_{GH}\left(B_g(K,4\bar{\iota}),
   Z\right)<\delta_F(m,l,\bar{\iota})$, and
   \item with $\Phi: B_g(K,4\bar{\iota})\to Z$ denoting a
   $\delta$-Gromov-Hausdorff approximation, 
   \begin{align*}
   \forall\ p\in B_g(K,R),\quad \rank\
   \tilde{\Gamma}_{\delta}(p)=m-k,\quad\text{and}\quad \exists\ \phi_p\in
   Hom\left(\pi_1(B_g(K,R),p),G_{\Phi(p)}\right) \end{align*}
   which is surjective, where $G_{\Phi(p)}$ is the orbifold group at
   $\Phi(p)\in Z$,
 \end{enumerate}
then there is an open subset $U\subset M$ with $K\Subset U \Subset
B_{g}(K,\bar{\iota})$, such that $U$ is an infranil fiber bundle over 
 $Z_{4\bar{\iota}}:=\left\{z\in Z:\ d_Z(z,\partial Z)>
4\bar{\iota}\right\}$.
\end{theorem}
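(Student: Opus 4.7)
The plan is to combine the Ricci flow smoothing result of Theorem~\ref{thm: main2} with the Cheeger--Fukaya--Gromov type fibration theory for collapsed manifolds with two-sided curvature bounds. The rough idea: after we run the Ricci flow for a short but definite time, the evolved metric acquires uniformly bounded curvature on the scale $\sqrt{t}$, so that the collapsing with orbifold base falls into the classical bounded-curvature regime, and the infranil fiber bundle structure can then be constructed and pulled back to the original metric via a distance distortion estimate.

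More concretely, first I would verify that the global hypotheses of Theorem~\ref{thm: main3} imply, at every $p\in B_g(K,R)$ for a suitable $R\sim \bar\iota$, the pointwise assumptions of Theorem~\ref{thm: main2}. The local closeness $d_{GH}(B_g(p,\tfrac{R}{4}),\mathbb{B}^k(\tfrac{R}{4})/G_{\Phi(p)})<\delta'$ follows from the global Gromov--Hausdorff approximation $\Phi$ and the local $(l,\delta,\bar\iota)$-controlled orbifold structure of $Z$ by combining two approximations; the surjective homomorphism onto $G_{\Phi(p)}$ is supplied by hypothesis (2), and the rank of $\tilde\Gamma_\delta(p)$ is exactly $m-k$ by the same hypothesis. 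After a harmless rescaling to arrange $\Rc_g\geq -(m-1)g$ on a ball of radius $\sim \bar\iota$, Theorem~\ref{thm: main2} then yields a Ricci flow $g(t)$ on $B_g(K,\tfrac{\bar\iota}{4})$ for $t\in(0,\varepsilon_O^2]$ with
\begin{align*}
\sup_{B_g(K,\bar\iota/4)}\bigl|\Rm_{g(t)}\bigr|_{g(t)}\ \le\ \alpha t^{-1}+\varepsilon_O^{-2}.
\end{align*}

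Next, choose a fixed small $t_0\in(0,\varepsilon_O^2]$ depending only on $(m,l,\bar\iota)$. At time $t_0$ the evolved metric has two-sided sectional curvature bounded by $C(\bar\iota,\alpha)$, and by Shi's estimates~(\ref{eqn: Shi_estimate}) all derivatives of curvature are controlled as well. The distance distortion estimate from the Appendix (alluded to in the abstract) ensures that $d_{g(t_0)}$ and $d_g$ are bi-Lipschitz close with constants tending to $1$, so $(B_g(K,\tfrac{\bar\iota}{4}),g(t_0))$ remains $\delta''$-Gromov--Hausdorff close to the same locally $(l,\delta,\bar\iota)$-controlled orbifold $Z_{4\bar\iota}$. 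This places us in the bounded curvature collapsing setting over an orbifold base, to which the equivariant fibration theorems of Fukaya \cite{Fukaya88, Fukaya89} and Cheeger--Fukaya--Gromov \cite{CFG92}, extended to orbifold bases (see also Remark~\ref{rmk: general_orbifold} and Lemma~\ref{lem: isometry}), apply: one obtains an open $U'\subset B_g(K,\tfrac{\bar\iota}{4})$ with $K\Subset U'$, fibered over $Z_{4\bar\iota}$ with infranilmanifold fibers, the nilpotent structure and dimension of the fiber being dictated by the pseudo-local fundamental group $\tilde\Gamma_\delta(p)$. The rank condition $\rank\tilde\Gamma_\delta(p)=m-k$ together with the Kapovitch--Wilking--Naber--Zhang almost-nilpotency discussed after Theorem~\ref{thm: main1} guarantees that the fibers have dimension exactly $m-k$ and are genuine infranilmanifolds rather than quotients by a lower rank lattice.

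Finally, the infranil fiber bundle structure built with respect to $g(t_0)$ transfers to the original metric $g$: the bi-Lipschitz control on distances shows that fibers of the $g(t_0)$-fibration remain close to fibers in the $g$-geometry, and a small isotopy (using the parabolic regularity of the flow to control how the projection to $Z_{4\bar\iota}$ evolves) produces the desired open set $U\subset M$ with $K\Subset U\Subset B_g(K,\bar\iota)$ fibering over $Z_{4\bar\iota}$ with infranil fiber. The main obstacle I anticipate is the handling of the orbifold singularities whose isotropy $G_z<O(k)$ need not act freely on $\mathbb S^{k-1}$: over such points the equivariant fibration must be constructed so that the $G_z$-action on the fiber is compatible with the surjective homomorphism $\phi_p$ and with the action on the universal cover of the orbifold neighborhood, which is precisely where the $RCD$-type rigidity from Lemma~\ref{lem: isometry} is needed to identify the local model and to promote the pointwise almost-isometries into a genuine fiber bundle projection.
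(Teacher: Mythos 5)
Your overall plan coincides with the paper's proof: verify Theorem~\ref{thm: main2}'s pointwise hypotheses from the global ones, run the Ricci flow, use the time-$T_F$ curvature bound and Shi's estimates, use a distance distortion estimate to conclude that the flowed metric still collapses to the orbifold, and apply bounded-curvature fibration theory. However, several details are off.

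First, the distance distortion estimate you invoke is the wrong one, and its statement is mischaracterized. The Appendix's Theorem~\ref{thm: dis_dis_2} gives a H\"older-type estimate with exponent $1\pm 4(m-1)\alpha$ and a constant $C_D\ge 1$ that does not tend to one; this is not ``bi-Lipschitz close with constants tending to $1$.'' The estimate the paper actually needs is Lemma~\ref{lem: dis_dis} in Section 4 (proved by lifting to the universal cover and invoking \cite[Lemma 1.11]{HKRX18}), which gives an \emph{additive} bound $\left|d_{g(t)}(x,y)-d_g(x,y)\right|\le \Psi_D(\alpha|m)\sqrt{t}$ for $d_g(x,y)\le\sqrt{t}$. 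After rescaling by $\sqrt{T_F}$, this is precisely the content of a $(1,\Psi_D)$-Gromov--Hausdorff approximation between $(2T_F)^{-1}g$ and $g(T_F)'$, which is a weaker and different notion than bi-Lipschitz closeness.

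Second, you have not identified the key technical step that makes the application of CFG theory legitimate. The flowed metric is only $(1,\Psi_D)$-GH close to the orbifold, not $\delta$-GH close in the classical sense, so one cannot cite \cite[Theorem 2.6]{CFG92} directly. The paper replaces it by \cite[Theorem 2.2]{HKRX18}, which shows that $(1,\Psi_D)$-GH collapsing already suffices to produce the same fibration conclusions. Without this substitution the CFG invocation does not go through.

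Third, the final isotopy step is superfluous. The fibration $f:U\to Z_{4\bar{\iota}}$ is constructed as a smooth map using the metric $g(T_F)'$, and the conclusion ``$U$ is an infranil fiber bundle over $Z_{4\bar{\iota}}$'' is a differential-topological statement about $U$; it is independent of the choice of Riemannian metric on $U$, so no transfer back to the $g$-geometry is required. The distance distortion is not used to ``transfer the fibration''; it is used to certify that $(B_g(K,\bar{\iota}),g(T_F)')$ still collapses to the orbifold at unit scale, which is needed to apply the fibration theorem in the first place.

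Finally, you misattribute the role of Lemma~\ref{lem: isometry} and Remark~\ref{rmk: general_orbifold}. These are ingredients in the \emph{proof of Theorem~\ref{thm: main2}} (they handle the almost-Euclidean isoperimetric estimate on a suitable finite covering space), not in extending CFG theory to singular orbifold bases. Once Theorem~\ref{thm: main2} has been established, the orbifold singularities — including those whose isotropy does not act freely on $\mathbb{S}^{k-1}$ — are already covered by the bounded-curvature collapsing theory of Fukaya and CFG; they cause no new difficulty in the fibration construction itself.
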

The emphasis of this theorem is of course the collapsing setting, i.e. the case 
$k<m$. When $k=m$ it can be seen that there cannot be a surjective group
homomorphism from the fundamental group to the orbifold group. Since otherwise,
there would be a sequence of $m$-dimensional Riemannian manifolds with
uniformly bounded curvature converging to an $m$-dimensional orbifold in the
pointed Gromov-Hausdorff sense, and this is impossible.

Theorem~\ref{thm: main3} generalizes \cite[Proposition 6.6]{NaberZhang} in two
fronts: it removes the Ricci curvature upper bound assumption when
$\left|G_{\Phi(z)} \right|=1$, and it allows the collapsing limit to be any
singular orbifold when $\left|G_{\Phi(z)}\right|>1$. While the removal of the
assumed Ricci curvature upper bound in \cite[Proposition 6.6]{NaberZhang} is
expected among experts (see \cite[Remark 6.5]{NaberZhang} and
Remark~\ref{rmk: HR20}), perhaps the more substantial contribution of
Theorem~\ref{thm: main3} is that it copes with singular collapsing limits (see 
Remark~\ref{rmk: Reifenberg}), giving a topological condition that detects the
nilpotent Killing structure \emph{a la} Cheeger, Fukaya and Gromov
\cite[Definition 1.5]{CFG92} in the context of Ricci curvature bounded from
below.
 
From the perspective of applications, Theorem~\ref{thm: main3} will provide a
technical tool for the possible generalization of our rigidity theorem on the
first Betti number (\cite[Theorem 1.1]{HW20a}) to the situation of controlled 
singular orbifolds as collapsing limits. See also \cite[Remarks 0.4 -
0.6]{Huang20} and the third last paragraph on \cite[Page 2]{HKRX18} 
for potential applications of Theorem~\ref{thm: main3} in the study of the
locally bounded Ricci covering geometry (see e.g. \cite{HRW20, Rong20}) ---
notice that the maximal nilpotency rank assumption on the fibered fundamental
groups guarantees the local universal covering to be uniformly non-collapsing,
as shown in \cite{NaberZhang}.
 

\section{Initial data locally collapsing to Euclidean model spaces} 
In this section we prove our first Ricci flow local existence theorem.
We fix a complete Riemannian manifold $(M,g)$ with
 Ricci curvature bounded below as $\Rc_{g}\ge -(m-1)g$. Given a compact
subset $K\subset M$, the neighborhood $B_g(K,R):=\{x\in M:\
d_g(x,K)<R\}$ has compact closure in $M$ for any $R>0$, thanks to the
completeness of $(M,g)$.

Fixing any $R>0$ and any compact subset $K\subset M$, we could already 
start a Ricci flow with initial data $(B_g(K,R),g)$, based on the conformal 
transformation technique due to Hochard \cite[\S 6]{Hochard} (see also
\cite{ST17, He16, Lai19} for other applications and refinements of
this technique) --- we will blow the boundary to infinity using a well-known
conformal factor, and apply Shi's short-time existence theorem to the newly
obtained metric:
 \begin{theorem}[Shi's short-time existence]\label{thm: Shi}
There are positive constants $C_S(m)$ and $T_S(m,K)$ such that if
$(M^m,g)$ is an $m$-dimensional complete Riemannian manifold with sectional
curvature uniformly bounded by $K>0$ in absolute value, then there exists a
complete Ricci flow solution $g(t)$ defined on $M\times [0,T_S]$, satisfying 
\begin{align}\label{eqn: Shi_Rm}
\forall t\in [0,T_S],\quad \sup_{M}\left|\Rm_{g(t)}\right|_{g(t)}\ \le\
C_St^{-1}.
\end{align}
\end{theorem}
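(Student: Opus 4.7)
The plan is the classical two-step strategy: convert the degenerate-parabolic Ricci flow system to a strictly parabolic one via DeTurck's trick, and then control the curvature with a maximum principle argument. For the background metric I would take $\tilde g := g$ and consider the $\tilde g$-DeTurck flow
\begin{align*}
\partial_t h(t) \;=\; -2\Rc_{h(t)} + \mathcal{L}_{W(t)} h(t), \qquad h(0) = g,
\end{align*}
where the vector field $W^k = h^{ij}\bigl(\Gamma^k_{ij}(h) - \Gamma^k_{ij}(\tilde g)\bigr)$ symmetrizes the principal symbol of $\Rc_h$, rendering the equation for $h$ strictly parabolic. Standard quasilinear parabolic theory then yields short-time existence on some interval $[0, T_1]$ in the compact case; in the complete noncompact case I would proceed by compact exhaustion $\Omega_j \Subset \Omega_{j+1} \Subset M$, solving the DeTurck equation on each $\Omega_j$ with suitable Dirichlet-type boundary data, deriving a priori interior estimates independent of $j$, and passing to a diagonal limit. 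Pulling back $h(t)$ by the flow of $-W(t)$ converts it into a Ricci flow solution $g(t)$ with $g(0) = g$.

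To establish the curvature bound, I would use Hamilton's evolution equation
\begin{align*}
(\partial_t - \Delta_{g(t)}) |\Rm_{g(t)}|^2 \;\le\; -2|\nabla \Rm_{g(t)}|^2 + c_m |\Rm_{g(t)}|^3,
\end{align*}
and, after dropping the good gradient term, an ODE comparison via the scalar maximum principle gives $\sup_M |\Rm_{g(t)}|_{g(t)} \le 2K$ on $[0, T_S]$ provided $T_S$ is chosen comparable to $(c_m K)^{-1}$. Fixing $T_S = c_0(m) K^{-1}$ then yields, for every $t \in (0, T_S]$,
\begin{align*}
|\Rm_{g(t)}|_{g(t)} \;\le\; 2K \;\le\; \frac{2 c_0(m)}{T_S} \;\le\; \frac{C_S(m)}{t},
\end{align*}
with $C_S(m) := 2 c_0(m)$; the inequality holds trivially at $t = 0$ under the convention that the right-hand side is $+\infty$.

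The main obstacle lies in the noncompact phase of the existence argument: one needs interior estimates on the DeTurck flow that are independent of the exhaustion parameter $j$ and that propagate the initial curvature bound despite the auxiliary boundary data. Shi's original argument handles this through a carefully constructed chain of maximum-principle estimates for auxiliary quantities such as $F = (A - |\Rm_{h(t)}|^2)^{-1}\, t\, |\nabla \Rm_{h(t)}|^2$ with $A$ a suitable upper bound, which simultaneously prevents $|\Rm|$ from escaping and produces the gradient estimate $|\nabla \Rm_{g(t)}| \le C t^{-1/2}$; higher-order bounds then follow inductively and ensure that the exhaustion limit is smooth and complete on $M \times [0, T_S]$ with the stated curvature control.
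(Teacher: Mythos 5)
The paper does not prove Theorem~\ref{thm: Shi}; it is stated as a citation to Shi's 1989 paper \cite{Shi} and used as a black box, so there is no ``paper's own proof'' to compare against. Your sketch is the standard argument from Shi's original paper --- DeTurck trick to obtain a strictly parabolic system, compact exhaustion with uniform interior estimates to pass to the noncompact case, and a maximum-principle bootstrap (using auxiliary quantities of the type $F = t\,|\nabla\Rm|^2/(A - |\Rm|^2)$) to establish the curvature bound --- and is essentially correct as a high-level outline. Your observation that the pointwise bound $|\Rm_{g(t)}| \le 2K$ on $[0,T_S]$ with $T_S \sim K^{-1}$ implies the $C_S t^{-1}$ form of \eqref{eqn: Shi_Rm} is sound, since $C_S t^{-1} \ge C_S T_S^{-1} \sim K$ throughout the interval. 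The one technical point worth flagging explicitly, which you gesture at but don't resolve, is that the maximum principle on a complete noncompact manifold requires justification (e.g., a cutoff argument or an Omori--Yau type principle) --- this is precisely where Shi's carefully designed auxiliary function $F$, which vanishes at $t=0$ and whose growth can be controlled, earns its keep; simply ``dropping the gradient term and comparing with an ODE'' is not automatic without that scaffolding. As a cited-theorem sketch, though, the proposal captures the correct structure of the argument.
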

 We now prove the following short-time existence result: 
 \begin{lemma}[Locally starting the Ricci flow]\label{lem: start_RF}
 Let $K\subset M$ be a compact subset of a complete Riemannian manifold
 $(M^m,g)$ with $\Rc_g\ge -(m-1)g$. For any $R>0$ there is a smooth family of
 Riemannian metrics $g(t)$ on $B_g(K,\frac{R}{4})$ satisfying
 \begin{align*}
 \begin{cases}
 \partial_tg(t)\ &=\ -2\Rc_{g(t)}\quad \text{on}\ B_g(K,\frac{R}{4})\times
 [0,T],\\
  g(0)\ &=\ g\quad \text{on}\ B_g(K,\frac{R}{4}),
 \end{cases}
 \end{align*}
 defined up to some time $T>0$ such that
 \begin{align*}
 \forall t\in (0,T],\quad  \sup_{B_g(K,\frac{R}{4})}
 \left|\Rm_{g(t)}\right|_{g(t)}\ \le\ Ct^{-1},
 \end{align*}
 where the positive constants $C$ and $T$ depend on $g$, $K$ and $R$.
  \end{lemma}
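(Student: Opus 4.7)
The plan is to apply the conformal transformation technique of Hochard \cite{Hochard}: blow up the boundary of a neighborhood of $K$ to infinity using a conformal factor that is the identity near $K$, pass to the resulting complete manifold with bounded sectional curvature, apply Shi's short-time existence (Theorem~\ref{thm: Shi}), and restrict the flow back to the inner neighborhood of $K$ to obtain the desired Ricci flow solution starting from $g$.

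I first set $U := B_g(K, R/2)$. Completeness of $(M,g)$ together with compactness of $K$ gives that $\overline{U}$ is compact, hence $\sup_U |\Rm_g|_g \le K_0$ for some finite $K_0 = K_0(g,K,R)$. Following \cite[\S 6]{Hochard}, I construct a smooth $u: U \to [1,\infty)$ with $u \equiv 1$ on $B_g(K, R/4)$ and $u(x) \to \infty$ as $x \to \partial U$, tuned so that the first and second derivatives of $\log u$ measured with respect to $g$ are pointwise controlled. A concrete example is $u = f(\rho)$, where $\rho$ is a smooth regularization of $d_g(\cdot,\partial U)$ and $f \in C^\infty(\mathbb{R}_{>0})$ satisfies $f \equiv 1$ on $[R/8, \infty)$ and $f(s) \sim s^{-1}$ as $s \to 0^+$, with higher derivatives arranged so that $f'/f$ and $f''/f$ stay bounded. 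The pointwise conformal formula
\begin{align*}
\Rm_{u^2 g}\ =\ u^{-2}\bigl(\Rm_g + g \KN T[\log u]\bigr),
\end{align*}
where $T[\log u]$ collects the Hessian and gradient-squared terms of $\log u$, then yields $|\Rm_{u^2 g}|_{u^2 g} \le K_1$ for some finite $K_1 = K_1(g,K,R)$. The blow-up rate of $u$ forces every divergent curve in $U$ to have infinite $u^2 g$-length, so $\tilde g := u^2 g$ is a complete Riemannian metric on $U$ with uniformly bounded sectional curvature.

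Theorem~\ref{thm: Shi} applied to $(U,\tilde g)$ then yields a complete Ricci flow $\tilde g(t)$ on $U \times [0, T_S(m, K_1)]$ with $\sup_U |\Rm_{\tilde g(t)}|_{\tilde g(t)} \le C_S(m)/t$. Because $u \equiv 1$ on $B_g(K, R/4)$, the initial value $\tilde g(0)$ coincides with $g$ throughout this inner neighborhood; consequently $g(t) := \tilde g(t)\big|_{B_g(K, R/4)}$, defined for $t \in [0,T]$ with $T := T_S(m, K_1)$ and $C := C_S(m)$, is a smooth family satisfying the Ricci flow equation pointwise on $B_g(K, R/4)$ with $g(0) = g$ and the claimed curvature bound, since these are all local properties inherited from $\tilde g(t)$.

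The main obstacle is the construction of $u$ and the verification of the sectional curvature bound for $\tilde g$: while the formal setup is straightforward, the quantitative choice of the blow-up profile $f$ together with the accompanying pointwise conformal curvature estimate is the technical heart of the argument. This construction is by now well-documented in the literature (see, e.g., \cite{Hochard, ST17, He16, Lai19}), and no new idea is required for this particular lemma beyond importing it; the constants $C$ and $T$ are explicit functions of $m$ and of $K_1(g,K,R)$, which in turn depend on $g$, $K$, and $R$ as asserted.
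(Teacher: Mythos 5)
Your proposal is correct and follows essentially the same route as the paper: blow up the boundary of a collar neighborhood of $K$ to infinity by a conformal factor that is identically $1$ on $B_g(K,R/4)$, verify that the new metric is complete with bounded sectional curvature (curvature depending on $g$, $K$, $R$), apply Shi's short-time existence theorem, and restrict the flow back to $B_g(K,R/4)$.

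The one notable difference is in how the conformal factor is manufactured. You take $u=f(\rho)$ with $\rho$ a smooth regularization of the distance to $\partial U$, whereas the paper builds the cut-off $\psi$ as a finite sum of Cheeger--Colding cut-off functions $\psi_j$ (each with the dimensional bound $R\|\nabla\psi_j\|+R^2\|\Delta_g\psi_j\|\le C(m)$) and then composes with an explicit profile $w_\theta$ depending on a free parameter $\theta$. For the lemma as stated this is immaterial, since $C$ and $T$ are allowed to depend on $g,K,R$; a generic mollification of the distance function is fine. But the paper's specific construction is a deliberate setup for the proof of Theorem~\ref{thm: main1}: the $C^0$ gradient and Laplacian of $\psi$ (hence of $\rho$) must carry uniform dimensional constants $C_0(m,R)$ so that the scalar curvature of $\pi^*h$ (via (\ref{eqn: Sc_pi_h_lb})) and the isoperimetric constant (via (\ref{eqn: h_isoperimetric})) can be controlled uniformly on the covering space, and the parameter $\theta$ is later fixed in terms of $\delta_P(m,\alpha)$. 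A bare smoothing of the distance function would give no control on $\|\nabla^2\psi\|_{C^0}$ (cf.\ Remark~\ref{rmk: TianWang}), which the paper avoids precisely by using a Laplacian-controlled cut-off rather than a Hessian-controlled one. So your construction proves the lemma, but if you are tracking the rest of the section you should swap in the Cheeger--Colding cut-off (or the smoothed $d^2$ construction the paper mentions in the remark following the lemma) to get the $R$-dependent constants that feed into the pseudo-locality argument.
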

 \begin{proof}
Let $\{x_j\}\subset B_g(K,\frac{R}{4})$ be a maximal collection of points such
that $B_g(x_j,\frac{R}{8})\cap B_g(x_{j'},\frac{R}{8})=\emptyset$ whenever
$j\not= j'$. It is clear that $\left\{B_g(x_j,\frac{R}{4})\right\}$ covers
$B_g(K,\frac{R}{4})$ and the multiplicity of this covering is bounded above, at
each point of $B_g(K,\frac{R}{4})$, by
$V_{-1}^m(\frac{5R}{8})V_{-1}^m(\frac{R}{8})^{-1}$, where 
$V_{-1}^m(r)$ denotes, for any $r>0$, the volume of a geodesic $r$-ball in the
space form of sectional curvature equal to $-1$.

We now let $\psi_j:M\to [0,1]$ be the cut-off function constructed in
\cite{ChCo0}, supported in $B_g(x_j,\frac{R}{2})$ with  
$\psi_j|_{B_g(x_j,\frac{R}{4})}\equiv 1$, and satisfying the control $R\|\nabla
\psi_j\|_{C^0(M)}+R^2\|\Delta_g\psi_j\|_{C^0(M)}\le C(m)$. We then have
$\psi:=\sum_j\psi_j$ supported in $B_g(K,R)$, $\psi(x)\ge 1$
whenever $x\in B_g(K,\frac{R}{4})$, and
\begin{align*} 
R\|\nabla\psi\|_{C^0(M)} +R^2\|\Delta_g\psi\|_{C^0(M)}\ \le\
\frac{C(m)V_{-1}^m(\frac{5R}{8})}{V_{-1}(\frac{R}{8})}\ =:\ C_0(m,R).
\end{align*}

 We could find a smooth cut-off function
$u:[0,\infty)\to [0,1]$, such that $u|_{[1,\infty)}\equiv 0$, $u(0)=1$,
$-10<u'<0$ and $|u''|<10$.
We then define the function 
$\rho:=u\circ \psi: B_g(K,\frac{3R}{4})\to [0,1]$, which clearly satisfies the
gradient bound
\begin{align}\label{eqn: rho_C2}
R\|\nabla \rho\|_{C^0(B_g(K,\frac{3R}{4}))}
+R^2\|\Delta_g\rho\|_{C^0(B_g(K,\frac{3R}{4}))}\ \le\ 100C_0(m,R).
\end{align}

For any $\theta \in (0,\frac{1}{2})$ to be fixed later, we define 
the function $w_{\theta}:[0,1)\to [0,\infty)$ as
\begin{align}\label{eqn: f_defn}
w_{\theta}(s)\ :=\ \begin{cases}
\ 0\quad &\text{for}\ s\in [0,1-\theta];\\
\ -\ln (1-(s-1+\theta)^2\theta^{-2})\quad &\text{for}\ s\in [1-\theta,1).
\end{cases}
\end{align}
Notice that for each $s \in (1-\frac{3}{2}\theta,1)$, with 
$\zeta=\zeta(\theta,s):=\frac{1}{4}\theta(1-s)>0$, we have
 \begin{align}\label{eqn: seeking_beta}
0 < s-\zeta < s+\zeta < 1,\quad e^{w_{\theta}(s+\zeta)-w_{\theta}(s-\zeta)}
\le 1+2\theta,\quad \text{and}\quad 10^{-2}\theta^2 \le \zeta
e^{w_{\theta}(s-\zeta)} \le 1.
 \end{align}
In fact, we could perturb $w_{\theta}$ slightly to make it smooth,
still vanishing identically on $[0,1-\frac{3}{2}\theta)$, while keeping
(\ref{eqn: seeking_beta}) and the following derivative control true 
for $s\in [1-\frac{3}{2}\theta,1)$:
\begin{align}\label{eqn: tf_C2}
0\ <w_{\theta}'(s)\ \le\ \frac{2\theta}{\theta^2-(s-1+\theta)^2}\quad 
\text{and}\quad 0\ <\ w_{\theta}''(s)\ \le\ 
\frac{4\theta^2}{(\theta^2-(s-1+\theta)^2)^2}.
\end{align}

Now we define the conformal factor $f_{\theta}=w_{\theta}\circ
\rho:B_g(K,\frac{3R}{4})\to [0,\infty)$, and consider the new metric
$h_{\theta}=e^{2f_{\theta}}g$, defined on $B_g(K,\frac{3R}{4})$. The
function $f_{\theta}$ blows to infinity the boundary points of $\partial
B_g(K,\frac{3R}{4})$, and $h$ becomes a complete Riemannian metric on
$B_g(K,\frac{3R}{4})$.
Since $f_{\theta}(x)=0$ whenever $x\in B_g(K,\frac{R}{4})$, we have
$h_{\theta}\equiv g$ on $B_g(K,\frac{R}{4})$, which contains $K$. Moreover,
since $\overline{B_g(K,\frac{3R}{4})}$ is compact in the complete Riemannian
manifold $M$, we have
\begin{align*}
\sup_{\wedge^2TB_g(K,\frac{3R}{4})}|\K_g|\ =:\ \kappa(K,g,R)\ <\ \infty,
\end{align*}
with $\K_g$ denoting the sectional curvature evaluated at some tangent plane,
viewed as a point in $\wedge^2TB_g(K,\frac{3R}{4})$. We could now compute
the corresponding sectional curvature of $h$ under an orthonormal frame
$\{e_a\}$ as following:
\begin{align*}
(\K_{h_{\theta}})_{ab}\ =\ e^{-2f_{\theta}}\left((\K_g)_{ab}-\sum_{c\not=
a,b}|\nabla_cf_{\theta}|^2 +\nabla^2_{aa}f_{\theta}+\nabla^2_{bb}f_{\theta} 
\right).
\end{align*}
By the estimates (\ref{eqn: rho_C2}) on $\rho$ and (\ref{eqn: tf_C2}) on
$u_{\theta}$, we have the bounds 
\begin{align}\label{eqn: f_C2}
\begin{split}
e^{-f_{\theta}}\|f_{\theta}\|_{C^1(B_g(K,\frac{3R}{4}))}\ &\le\
10^7kC_{0}(m,R)^2\theta^{-1}R^{-2},\\
e^{-2f_{\theta}}\|f_{\theta}\|_{C^2(B_g(K,\frac{3R}{4}))}\ &\le\
10^9k^2C_{0}(m,R)^2\theta^{-2}R^{-2} +10^3k\|\psi\|_{C^2(M)}
\theta^{-3}\ <\ \infty,
\end{split}
\end{align} 
 and since $\kappa(g)<\infty$, while $f_{\theta}(x)\to
\infty$ as $x\to \partial B_g(K,\frac{3R}{4})$, we see that 
\begin{align}\label{eqn: kappa_C2}
\sup_{\wedge^2TB_g(K,\frac{3R}{4})}|\mathbf{K}_h|\ \le\
10^9k^2\left(C_0(m,R)^2R^{-2}+\kappa(K,g,R)
+\|\psi\|_{C^2(M)}\right) \theta^{-4}\ <\ \infty.
\end{align}
We could therefore appeal to Shi's existence theorem (Theorem~\ref{thm: Shi})
to start a Ricci flow $h(t)$ on the complete non-compact Riemannian manifold
$(B_g(K,\frac{3R}{4}),h)$, which has a global curvature bound depending on
$\kappa(K,g,R)$, $\|\psi\|_{C^2(M)}$ and $\theta$. The maximal
existence time of the Ricci flow $h(t)$ is therefore bounded below by some
$T>0$ determined by $m, k,\theta,\kappa(K,g,R)$ and $\|\psi\|_{C^2(M)}$.
Shi's existence theorem also provides the following curvature bound:
\begin{align*}
\forall t\in (0,T],\quad
\sup_{B_g(K,\frac{R}{4})}\left|\Rm_{h(t)}\right|_{h(t)}\ \le\ C_St^{-1},
\end{align*} 
where $C_S>0$ depends on $m,k,\kappa(g)$ and
$\|\psi\|_{C^2(M)}$. Now restricting the flow $h(t)$ to
$K$, where $g\equiv h=h(0)$, we obtain the desired Ricci flow
with initial data $(K,g)$ and curvature bound.
 \end{proof}
 
 \begin{remark}
 In fact, we can also let $\frac{9R^2}{16}(1-\psi)$ be obtained by slightly
 smoothing the square of the distance to $B_g(K,\frac{R}{4})$. The key
 Laplacian upper bound of this cut-off function is then a consequence of the
 Laplacian comparison: $\Delta_gd^2\le 2n$. Such cut-off function has been
 discussed in \cite{YauLecture}.
 \end{remark}
 
Although this lemma enables us to start a Ricci flow locally, we have no
uniform control on the flow, as both the maximal existence time and the
curvature bound in (\ref{eqn: Shi_Rm}) depend on the specific geometric
structure of the space in consideration, encoded in $\kappa(K,g,R)$ and
$\|\psi\|_{C^2(M)}$ as shown by (\ref{eqn: kappa_C2}).
To prove Theorem~\ref{thm: main1}, we will need to reduce their
depence to only on the Ricci curvature. 

A typical approach to obtain a uniform lower bound on the maximal existence
time of a Ricci flow is to invoke Perelman's pseudo-locality theorem (see
\cite[Theorem 10.1]{Perelman}, or another version \cite[Proposition
3.1]{TianWang}). In the complete non-compact setting, a detailed proof of the
pseudo-locality theorem could be found in \cite[Theorem 8.1]{CTY11}. We will
let $I_g(U)$ denote the isoperimetric constant of the domain $U$ equipped with
the metric $g$, while letting $I_m=m^m\omega_m$ denote the $m$-dimensional
Euclidean isoperimetric constant, and state the pseudo-locality theorem as the
following:
\begin{theorem}[Perelman's pseudo-locality for Ricci flows]
\label{thm: CTY11} 
For any $\alpha\in (0,10^{-1})$, there are positive constants
$\delta_{P}(m,\alpha)<1$ and $\varepsilon_P(m,\alpha)<1$, such that for any
$m$-dimensional complete Ricci flow solution $(M,g(t))$ defined on $t\in
[0,T)$, if each time slice has bounded curvature, then for any $x\in M$
satisfying
\begin{align}
 \inf_{B_{g(0)}(x,1)} \Sc_{g(0)}\ &\ge\ -1,
\label{eqn: Perelman_Sc_lb}\\
\text{and}\quad I_{g(0)}\left(B_{g(0)}(x,1)\right)\ &\ge\
(1-\delta_{P})I_m,
\label{eqn: Perelman_isoperimetric}
\end{align}
we have the following curvature bound for any $t\in [0,T)\cap
(0,\epsilon_P^2]$:
\begin{align}\label{eqn: pseudolocality_Rm_bound}
 \left|\Rm_{g(t)}\right|_{g(t)}(x_0)\ \le\ \alpha t^{-1}+\varepsilon_P^{-2}.
\end{align}
\end{theorem}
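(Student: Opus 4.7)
\medskip

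\noindent\textbf{Proof plan for Theorem~\ref{thm: CTY11}.} The plan is the standard contradiction/blow-up argument using Perelman's $\mathcal{W}$-entropy, adapted to the complete non-compact setting. Suppose the conclusion fails. Then for a fixed $\alpha\in(0,10^{-1})$ we could find sequences $\delta_i\to 0$ and $\varepsilon_i\to 0$, complete Ricci flows $(M_i,g_i(t))$ on $[0,T_i)$ with bounded curvature at each slice, and points $x_i\in M_i$ satisfying the hypotheses (\ref{eqn: Perelman_Sc_lb})--(\ref{eqn: Perelman_isoperimetric}) with $\delta_P$ replaced by $\delta_i$, such that nevertheless there is a first time $t_i\in(0,\varepsilon_i^2]$ at which $|\Rm_{g_i(t_i)}|_{g_i(t_i)}(x_i)>\alpha t_i^{-1}+\varepsilon_i^{-2}$. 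The idea is to rescale so that curvature becomes order one, extract a limiting ancient Ricci flow, and reach a contradiction by showing the limit must be Euclidean $\mathbb{R}^m$.

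First I would convert the isoperimetric inequality (\ref{eqn: Perelman_isoperimetric}) into a lower bound on Perelman's $\mu$-functional. By the Bobkov--Ledoux type correspondence between the sharp Euclidean isoperimetric constant $I_m$ and the sharp Euclidean log-Sobolev inequality, the assumption that $I_{g_i(0)}(B_{g_i(0)}(x_i,1))\ge(1-\delta_i)I_m$, together with the scalar curvature lower bound, yields
\begin{align*}
\mu\bigl(g_i(0),\tau\bigr)\ \ge\ -\eta(\delta_i)\qquad\text{for all }\tau\in(0,1],
\end{align*}
where $\eta(\delta_i)\to 0$ as $\delta_i\to 0$, provided the test functions in the definition of $\mu$ are supported near $x_i$. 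This localization is the technical heart of \cite{CTY11}; one cuts off the minimizers with Perelman's logarithmic cut-off trick and absorbs the error using the local scalar curvature bound.

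Next I would carry out a point-selection: choose $(\bar x_i,\bar t_i)$ close to $(x_i,t_i)$ at which $Q_i:=|\Rm_{g_i(\bar t_i)}|_{g_i(\bar t_i)}(\bar x_i)$ is essentially maximal in a controlled parabolic neighborhood (this is the standard Perelman point-picking lemma, see \cite{Perelman}). Rescale by $Q_i\to\infty$: set $\tilde g_i(s):=Q_i\,g_i(\bar t_i+Q_i^{-1}s)$. Then $\tilde g_i$ is defined on a parabolic neighborhood of $(\bar x_i,0)$ of size tending to infinity, with curvature bounded by $2$ say, and $|\Rm_{\tilde g_i(0)}|(\bar x_i)=1$. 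By Perelman's no-local-collapsing theorem, the $\mu$-lower bound propagates under the flow and rules out collapsing at all scales below $1$ around $(\bar x_i,0)$; combined with Shi's derivative estimates and Hamilton's compactness theorem, a subsequence converges in the pointed Cheeger--Gromov sense to an ancient solution $(M_\infty,\tilde g_\infty(s),\bar x_\infty)$ with bounded curvature and a point of curvature $1$.

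Finally, I would test the conjugate heat kernel based at $(\bar x_i,\bar t_i)$ against the $\mathcal{W}$-functional: monotonicity of $\mathcal{W}$ along the backward heat flow, combined with the $\mu$-lower bound $-\eta(\delta_i)$ at $t=0$, forces Perelman's reduced volume of the rescaled flow to be within $\eta(\delta_i)+o(1)$ of the Euclidean value $(4\pi)^{-m/2}e^{-|\cdot|^2/4}$ at scale $Q_i(\bar t_i)\to\infty$. Passing to the limit, the reduced volume on $(M_\infty,\tilde g_\infty)$ equals the Euclidean value identically, and by Perelman's rigidity this forces $(M_\infty,\tilde g_\infty)$ to be flat Euclidean space $\mathbb{R}^m$ — contradicting the existence of a point with $|\Rm|=1$. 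The principal obstacle is the localization step: unlike the closed case, one must carefully quantify how (\ref{eqn: Perelman_isoperimetric}) descends to a log-Sobolev inequality only on a ball, propagate this bound forward in time using only the local Ricci flow equation, and justify that the rescaled conjugate heat kernels concentrate inside the region where the estimate holds, despite the non-compactness of $M_i$.
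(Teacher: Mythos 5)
The paper does not prove Theorem~\ref{thm: CTY11}; it is a black box imported from the literature, with \cite[Theorem~8.1]{CTY11} cited for the complete non-compact case and \cite[Theorem~10.1]{Perelman} for the original compact statement. There is therefore no internal proof to compare your sketch against.

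As a free-standing outline of the pseudo-locality argument, your plan has the right skeleton (contradiction, point selection, blow-up, entropy monotonicity, near-zero initial entropy from the almost-Euclidean isoperimetric constant and scalar curvature bound), but three points should be corrected. First, the contradiction is produced through Perelman's $\mathcal{W}$-entropy evaluated on a cut-off conjugate heat kernel, not through the reduced volume; the rigidity one uses is that equality in the conjugate-heat Harnack inequality forces a gradient shrinking soliton, which is flat by non-collapsing, contradicting unit curvature at the blow-up base point. Second, invoking the global no-local-collapsing theorem to control the blow-up limit is circular here, since that theorem itself rests on the same $\mathcal{W}$-functional; in the localized setting one instead propagates the cut-off $\mathcal{W}$-integrand forward in time to obtain non-collapsing near the selected space-time point. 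Third, the localization step that you flag as the chief difficulty really is: in \cite{CTY11} the cut-off error estimates, the propagation of the local entropy bound, and the control of escaping conjugate-heat-kernel mass in the non-compact manifold occupy essentially all of the relevant section, so a proof of this theorem cannot simply name the obstacle and move on --- it is the theorem.
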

For those initial data satisfying the assumptions of Theorem~\ref{thm:
CTY11} at every point, we could obtain the uniform existence time lower bound
by a contradiction argument: if the existence time $T$ of the Ricci
flow is shorter than $\varepsilon_P^2$, then for some sequence
$t_i\nearrow T$ we could observe points $x_i\in M$ such that
$\lim_{t_i\to T}\left|\Rm_{g(t_i)}\right|_{g(t_i)}(x_i)= \infty$; especially,
we will get $\left|\Rm_{g(t_i)}\right|_{g(t_i)}(x_i)>2\alpha
T^{-1}+\varepsilon_P^{-2}$ for all $i$ large enough, contradicting the
conclusion (\ref{eqn: pseudolocality_Rm_bound}) since $T>0$ is fixed.

In the setting of Theorem~\ref{thm: main1}, however, we could not
directly apply the pseudo-locality theorem to the Ricci flow obtained from 
Lemma~\ref{lem: start_RF}, since the almost Euclidean isoperimetric constant
assumption (\ref{eqn: Perelman_isoperimetric}) fails drastically for
the initial data in our consideration. In order to overcome this difficulty, we
will pull the conformally transformed initial metric back to the local
universal covering space, which, under the maximal rank assumption of
Theorem~\ref{thm: main1} (Item (2)), is non-collapsing (see
\cite[Proposition 5.9]{NaberZhang}). By the known connection between the
isoperimetric constant and the volume ratio lower bound in the setting of Ricci
curvature bounded below (see e.g. \cite{Gromov}),
we then expect to improve the isoperimetric constant lower bound on the
covering space. We begin with the following
\begin{lemma}[Almost Euclidean condition for local normal covering spaces]
\label{lem: almost_Euclidean}
For any sufficiently small $\varepsilon>0$ fixed, there are positive
constants $\delta_{AE}\le 1$ and $r_{AE}\le 1$, solely determined by
$\varepsilon$ and $m$, to the following effect: let
$B_g(p,10)$ be a geodesic ball in a complete Riemannian manifold $(M^m,g)$
with $\Rc_g\ge -(m-1)g$, let $\pi: Y\to B_g(p,10)$ be any normal
covering with deck transformation group $G$; suppose for some
$\delta\le \delta_{AE}$ it holds
\begin{enumerate}
  \item $d_{GH}\left(B_g(p,10),\mathbb{B}^k(10)\right)<\delta $, and
  \item the group $\widehat{G}_{\delta}(p):=\left\langle \gamma\in G:\
  d_{\pi^{\ast}g}(\gamma.\tilde{p},\tilde{p})<2\delta\right\rangle$ has
  nilpotency rank equal to $m-k$,
 \end{enumerate}
then for any $r\in (0,r_{AE}]$ and any $\tilde{x}\in \pi^{-1}(B_g(p,7))\subset
Y$ we have
\begin{align}
\left|B_{\pi^{\ast}g}(\tilde{x},r)\right|_{\pi^{\ast}g}\ &\ge\
(1-\varepsilon)\omega_mr^m,\label{eqn: almost_max_vol}\\
\quad \text{and}\quad I_{\pi^{\ast}g}\left(B_{\pi^{\ast}g}(\tilde{x},r)\right)\
&\ge\ (1-\varepsilon)I_m. \label{eqn: AE_isoperimetric}
\end{align}
\end{lemma}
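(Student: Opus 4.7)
The plan is a contradiction-plus-rescaling argument. Supposing the conclusion fails, I extract sequences $\delta_i \to 0$, $r_i \to 0$, complete manifolds $(M_i^m, g_i)$ with $\Rc_{g_i} \geq -(m-1)g_i$, normal coverings $\pi_i : Y_i \to B_{g_i}(p_i, 10)$, and basepoints $\tilde{x}_i \in \pi_i^{-1}(B_{g_i}(p_i, 7))$ satisfying assumptions (1) and (2) with $\delta = \delta_i$, but for which either (\ref{eqn: almost_max_vol}) or (\ref{eqn: AE_isoperimetric}) fails at scale $r_i$. By passing to a diagonal I may further arrange $\delta_i / r_i \to 0$. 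Rescale the covering metric to $\tilde{g}_i := r_i^{-2} \pi_i^\ast g_i$; then $\Rc_{\tilde{g}_i} \geq -(m-1)r_i^2 \tilde{g}_i \to 0$, and the base, rescaled by $r_i^{-1}$, tends to flat $\R^k$ on compact sets.

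The heart of the argument is to identify the pointed Gromov-Hausdorff limit of $(Y_i, \tilde{g}_i, \tilde{x}_i)$ as $(\R^m, g_{\text{Euc}}, 0)$. Two ingredients combine for this. First, the maximal nilpotency rank assumption invokes \cite[Proposition 5.9]{NaberZhang}, which supplies a uniform lower bound on the volume ratio at $\tilde{x}_i$ in $\tilde{g}_i$, so any pointed Gromov-Hausdorff limit is a non-collapsed Ricci limit space of dimension $m$. Second, the base contributes $k$ almost-split $\R$-directions via the $\R^k$-closeness of $B_{g_i}(p_i,10)$ and the lifting of geodesic rays through the local isometry $\pi_i$, while the nilpotent subgroup $\widehat{G}_{\delta_i}(p_i)$ of rank $m-k$ acts isometrically on $(Y_i, \tilde{g}_i)$ and, through iterated commutator-controlled orbit displacements of $\tilde{p}_i$, produces $m-k$ further almost-translations in the rescaled limit. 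Applying the Cheeger-Colding almost splitting theorem iteratively then forces the limit to split off a factor $\R^m$; combined with the $m$-dimensionality from non-collapsing, the limit is exactly $(\R^m, g_{\text{Euc}}, 0)$.

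Once the limit is pinned down, Colding's volume convergence yields $|B_{\tilde{g}_i}(\tilde{x}_i, 1)|_{\tilde{g}_i} \to \omega_m$, which rescales back to contradict the failure of (\ref{eqn: almost_max_vol}). The isoperimetric bound (\ref{eqn: AE_isoperimetric}) then follows from stability of the isoperimetric profile under Gromov-Hausdorff convergence to Euclidean space in the Ricci-bounded setting --- a consequence of the Levy-Gromov inequality together with Bishop-Gromov volume comparison (as in the reference \cite{Gromov} already cited in the text), or equivalently of recent $RCD$-stability results. I expect the main obstacle to be the extraction of the $m-k$ splitting directions from the nilpotent generators of $\widehat{G}_{\delta_i}(p_i)$: since the short generators have displacement $\ll r_i$ in $\pi_i^\ast g_i$, they shrink to the identity under rescaling, so one must iterate them while carefully controlling commutator growth in the nilpotent group in order to recover unit-scale translations in the limit. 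Kapovitch-Wilking's fibration theory, as adapted by Naber-Zhang, is precisely the tool that effects this step, and I would invoke it to justify that the $m-k$ nilpotent generators produce $m-k$ independent $\R$-splittings compatible with the $k$ directions inherited from the base.
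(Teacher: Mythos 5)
The proposal takes a genuinely different route from the paper. The paper's proof is \emph{direct}: it first propagates the nilpotency-rank hypothesis from $p$ to every $x\in B_g(p,7)$ via Naber--Zhang's ``non-localness'' lemma (\cite[Lemma 5.3(ii)]{NaberZhang}, adapted to arbitrary normal covers as explained in Remark~\ref{rmk: nonlocal_rank}), then applies \cite[Proposition 5.4]{NaberZhang} as a black box to conclude that the covering is Gromov--Hausdorff close to $\mathbb{B}^m$ at a definite scale $r'=r_{NZ}$, rescales, and finishes with Colding's volume theorem and Cavalletti--Mondino's almost-Euclidean isoperimetric inequality. Your contradiction-plus-rescaling argument starts instead from the volume non-collapsing supplied by \cite[Proposition 5.9]{NaberZhang} and tries to re-derive the covering's Euclidean closeness via iterated almost-splitting.

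Beyond the stylistic difference, there are gaps in the proposal that need to be filled.

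First, the assertion that one may ``further arrange $\delta_i/r_i\to 0$'' by a diagonal is not automatic: the scale $r_i$ at which the conclusion fails is produced by the counterexample, not chosen by you, and could be far smaller than $\delta_i$. The reduction is fixable --- by Bishop--Gromov monotonicity of $r\mapsto |B(r)|/V^m_{-1}(r)$, failure of the volume bound at $r_i\le r_{AE,i}$ propagates up to $r_{AE,i}$ modulo a $(1+O(r_{AE,i}^2))$ factor, so one may take $r_i:=r_{AE,i}$ and pick $r_{AE,i}$ at one's leisure --- but this observation is missing, and without it the rescaled base need not approach $\mathbb{R}^k$.

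Second, and more seriously, the nilpotency-rank hypothesis lives at $p$, but you blow up at $\tilde x_i\in\pi^{-1}(B_g(p,7))$. After rescaling by $r_i^{-1}$ with $r_i\to 0$, the lift $\tilde p_i$ escapes to infinity, so ``iterated commutator-controlled orbit displacements of $\tilde p_i$'' contribute nothing to the pointed limit at $\tilde x_i$. The group elements that displace $\tilde p_i$ by $<2\delta_i$ may displace $\tilde x_i$ by an amount of order $d(\tilde p_i,\tilde x_i)$, which is bounded but not small. The paper's proof handles exactly this with the non-localness property (\ref{eqn: nonlocal_rank}), which transfers the rank-$(m-k)$ condition to the short generators $\widehat{G}_{\delta_{NZ}}(x)$ at $x$ itself. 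Without such a step, the splitting argument cannot even get started near $\tilde x_i$.

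Third, even granting the rank condition at $\tilde x_i$, extracting $m-k$ unit-scale $\mathbb{R}$-splittings from generators whose rescaled displacements vanish is precisely the content of \cite[Proposition 5.4]{NaberZhang}. Your appeal to ``Kapovitch--Wilking's fibration theory, as adapted by Naber--Zhang'' amounts to re-citing that result without supplying the argument, and the cleaner course --- the one the paper takes --- is simply to cite Proposition 5.4 directly once the rank has been propagated.
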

 
\begin{remark}\label{rmk: nonlocal_rank}
We will rely on \cite[Lemma 5.3 (ii)]{NaberZhang}, the ``non-localness''
property of the nilpotency rank, whose proof is based on
\cite[Theorem 2.26]{NaberZhang} and \cite[Lemma 5.2]{NaberZhang}. Here we
notice that \cite[Lemma 5.2]{NaberZhang} applies to any discrete isometric
group action, and if we replace \cite[Theorem 2.26]{NaberZhang} by
\cite[Theorem 4.25]{NaberZhang}, then the conclusion of \cite[Lemma 5.3
(ii)]{NaberZhang} holds for any normal covering: if $\pi: X\to B_g(p,10)$ is a
normal covering with deck transformation group $G$, then for any
$\varepsilon>0$ sufficiently small, there is a constant
$\Psi_{NZ}(\varepsilon|m)\in (0,\varepsilon)$, such that
\begin{align}\label{eqn: nonlocal_rank}
\forall x\in B_g(p,7),\quad \rank\ \widehat{G}_{\varepsilon}(x)\
\ge\ \rank\ \widehat{G}_{\Psi_{NZ}(\varepsilon|m)}(p),
\end{align}
where we recall that $\widehat{G}_{\delta}(x)=\left\langle \gamma\in G:\
d_{\pi^{\ast}g}(\gamma.\tilde{x},\tilde{x})<2\delta\right\rangle$, for any
$\tilde{x}\in \pi^{-1}(x)\subset X$.
\end{remark}

\begin{proof}
By \cite[Theorem 1.1]{CM18}, we obtain dimensional constants $C_{0,m}>0$, 
$\bar{\delta}_{0,m}>0$, $\bar{\epsilon}_{0,m}>0$ and $\bar{\eta}_{0,m}>0$. We
require that $\varepsilon \le
\min\{\bar{\delta}_{0,m},\bar{\varepsilon}_{0,m},\bar{\eta}_{0,m},10^{-1}\}$ and 
put $\varepsilon':=\frac{\varepsilon}{16} \min\left\{1,C_{0,m}^{-1}\right\}$ for
all such $\varepsilon$. We also let $r_0=r_0(\varepsilon')\in (0,1)$ be the
constant such that
\begin{align}\label{eqn: choosing_r_0}
\forall r\in (0,r_0],\quad (1-\varepsilon')\omega_mr^m\ \le\
V_{-1}^m(r)\ \le\ (1+\varepsilon')\omega_mr^m,
\end{align}
where $V_{-1}^m(r)$ is the volume of geodesic $r$-ball in the space form of
sectional curvature equal to $-1$. 

By Colding's volume continuity theorem, \cite[Main Lemma 2.1]{Colding97}, we
obtain for $\varepsilon'$ the corresponding positive constants
$\delta_C=\delta_C(\varepsilon')<1$, $\Lambda_C=\Lambda_C(\varepsilon')<1$ and
$R_C=R_C(\varepsilon')>1$. We then put $\varepsilon'':= r_0\Lambda_C\delta_C
R_C^{-1}\sqrt{\varepsilon'\slash (m-1)}$ in \cite[Proposition 5.4]{NaberZhang}
to obtain positive constants $\delta_{NZ}(\varepsilon'')<1$ and
$r':=r_{NZ}(\varepsilon'')\in (\delta_{NZ}(\varepsilon''),1)$. Finally we put
$\delta_{AE}(\varepsilon):=\Psi_{NZ}(\delta_{NZ}|m)<<
\delta_{NZ}(\varepsilon'')$ --- here $\Psi_{NZ}>0$ is the uniform constant
obtained in \cite[Lemma 5.3]{NaberZhang} (see also Remark~\ref{rmk:
nonlocal_rank}) --- we point out that $\Psi_{NZ}(\delta|m)$ is monotone
increasing in $\delta$, as readily checked from the proof of \cite[Lemma
5.3]{NaberZhang}.

Now suppose $B_g(p,10)\subset M$ satisfies the assumptions (1) and (2) with
$\delta<\delta_{AE}$. For any $x\in B_g(p,7)$, since $B_g(x,2)\subset
B_g(x,10)$, by $d_{GH}\left(B_g(p,10),\mathbb{B}^k(10)\right)<\delta$ we have
\begin{align}\label{eqn: B2_GH}
d_{GH}\left(B_g(x,2),\mathbb{B}^k(2) \right)\ <\ \delta.
\end{align}

On the other hand, the given normal covering $\pi:X\to B_g(p,10)$ restricts to
a normal covering $\pi^{-1}(B_g(x,2))\to B_g(x,2)$ with deck transformation
group $G$. Therefore, with some $\tilde{x}\in \pi^{-1}(x)\subset X$ fixed, we
have
\begin{align*}
\widehat{G}_{\delta_{NZ}}(x)\ :=\ &\left\langle \gamma\in G:\
d_{\pi^{\ast}g}(\gamma.\tilde{x},\tilde{x})<2\delta_{NZ}\right\rangle.
\end{align*}
By \cite[Lemma 5.3 (ii)]{NaberZhang}, especially (\ref{eqn: nonlocal_rank}), 
we have $\rank\ \widehat{G}_{\delta_{NZ}}(x)\ge \rank\
\widehat{G}_{\Psi_{NZ}(\delta_{NZ}|m)}(p)$. Moreover, since 
$\delta<\delta_{AE}=\Psi_{NZ}(\delta_{NZ}|m)$, we have
$\widehat{G}_{\delta}(p)\le \widehat{G}_{\delta_{AE}}(p)$, whence the lower
bound
\begin{align}\label{eqn: B2_rank}
\rank\ \widehat{G}_{\delta_{NZ}}(x)\ \ge\ \rank\ \widehat{G}_{\delta}(p)\ =\
m-k.
\end{align} 
Recalling that $\delta_{NZ}=\delta_{NZ}(\varepsilon'')$, we could apply
\cite[Proposition 5.4]{NaberZhang} with (\ref{eqn: B2_GH}) and (\ref{eqn:
B2_rank}) to see 
\begin{align*}
d_{GH}\left(B_{\pi^{\ast}g}(\tilde{x},r'),\mathbb{B}^m(r')\right)\ \le\
\varepsilon''r'.
\end{align*}
Now consider the rescaled metric $\bar{g}:=\lambda^{-2}\pi^{\ast}g$ with 
\begin{align}\label{eqn: choosing_lambda}
 \lambda(\varepsilon)\ :=\ \min\left\{r_0, \Lambda_C, 
 \sqrt{\varepsilon'\slash (m-1)}, r'R_C^{-1}\right\},
\end{align}
we have for any $\tilde{x}\in \pi^{-1}(B_g(p,7))$ that 
\begin{align*}
d_{GH}\left(B_{\bar{g}}(\tilde{x},R_C),\mathbb{B}^m(R_C)\right)\ <\ \delta_C,
\end{align*}
and we have the Ricci curvature lower bound 
\begin{align}\label{eqn: gbar_Ricci}
\Rc_{\bar{g}}\ \ge\ -\min\left\{(m-1)\Lambda_C^2,\varepsilon'\right\}\bar{g}.
\end{align} 
Consequently, applying \cite[Main Lemma 2.1]{Colding97} we have for any
$\tilde{x}\in \pi^{-1}(B_g(p,7))$,
\begin{align}\label{eqn: gbar_vol}
\left|B_{\bar{g}}(\tilde{x},1)\right|_{\bar{g}}\ \ge\ (1-\varepsilon')
\omega_m.
\end{align}
By the volume ratio comparison (\ref{eqn: choosing_r_0}) we have
\begin{align}\label{eqn: gbar_vr}
\forall r\in (0,1],\ \forall \tilde{x}\in \pi^{-1}(B_g(p,7)),\quad
\left|B_{\bar{g}}(\tilde{x},r)\right|_{\bar{g}}\ \ge\
(1-\varepsilon)\omega_mr^m.
\end{align}
On the other hand, by (\ref{eqn: gbar_Ricci}), (\ref{eqn: gbar_vol}) and the
choice of $\varepsilon'$, we could apply \cite[Theorem 1.1]{CM18}
to see 
\begin{align}\label{eqn: gbar_isoperimetric}
\forall r\in (0,\varepsilon'],\quad
I_{\bar{g}}\left(B_{\bar{g}}(\tilde{x},r)\right)\ \ge\ (1-\varepsilon)I_m.
\end{align}
for any $\tilde{x}\in \pi^{-1}(B_g(p,7))$. Notice the scaling invariance of
these estimates.

Now we scale back to the original metric $\pi^{\ast}g$ and the estimates
(\ref{eqn: gbar_vr}) and (\ref{eqn: gbar_isoperimetric}) remain valid for
geodesic balls centered anywhere in $X$, with radii not exceeding
$r_{AE}:=\varepsilon'\lambda(\varepsilon)$.
By (\ref{eqn: choosing_lambda}) and the bound of $r'\in (\delta_{AE}',1)$ in
\cite[Proposition 5.8]{NaberZhang}, we have the following bound on $r_{AE}$,
solely determined by $m$ and $\varepsilon$:
\begin{align}
\min\left\{r_0(\varepsilon'),
\Lambda_C(\varepsilon'),\sqrt{\varepsilon'\slash (m-1)},
\delta'_{AE}(\varepsilon'')R_C(\varepsilon')^{-1}\right\}\varepsilon'\ \le\
r_{AE}\ \le\ \varepsilon',
\end{align}
with $\varepsilon'=\frac{\varepsilon}{16}\min\left\{1,C_{0,m}^{-1}\right\}$ and
$\varepsilon''$ determined by $\varepsilon'$ via Colding's theorem.
\end{proof}

 With the almost Euclidean isoperimetric constant estimate on the local
 universal covering space, we could apply the pseudo-locality theorem on the
 covering space and write down the details of proving Theorem~\ref{thm: main1}. 
\begin{proof}[Proof of Theorem~\ref{thm: main1}] 
The short-time existence of the Ricci flow is already shown in Lemma~\ref{lem:
start_RF}, and here we only need to bound the existence time from below by a
constant only depending on the dimension $m$. We fix an $\alpha\in (0,10^{-1})$
to begin our discussion.

Recall that the conformal factor $f_{\theta}$ we used in Lemma~\ref{lem:
start_RF} involves an extra parameter $\theta\in (0,\frac{1}{2})$, which we now
fix. Let $\epsilon':=\frac{\delta_{P}}{4}$, where
$\delta_{P}=\delta_P(m,\alpha)>0$ is the dimensional constant provided by
Theorem~\ref{thm: CTY11}. We then define (assuming $\delta_P\le 1$)
\begin{align}\label{eqn: define_theta}
\theta\ :=\
\frac{1}{2}\left(\frac{4-\delta_{P}}{4-2\delta_{P}}\right)^{\frac{1}{m(m-1)}}
-\frac{1}{2}
\end{align}
This dimensional constant is thus defined so that
$(1-2\theta)^{-m}(4-\delta_{P})= (4+\delta_{P})$. Once $\theta$ is chosen, we
will drop this subscript in our writing of $f=f_{\theta}$ and $w=w_{\theta}$,
given in Lemma~\ref{lem: start_RF}. 

We set $\delta_E(m,R,\alpha):=10^{-2}\delta_{AE}(\epsilon')R$.

Letting $\pi:X\to B_g(K,R)$ denote the universal covering map, we now pull the
metric $g$ back to $X$. Clarly $\Rc_{\pi^{\ast}g}\ge -(m-1)\pi^{\ast}g$ on $X$. 
We notice that the conformal change of the pull-back metric
is exactly the pull-back of the conformal change:
\begin{align*} 
\pi^{\ast}h\ =\
\pi^{\ast}(e^{2f}g)\ =\ e^{2f\circ \pi}\pi^{\ast}g,
\end{align*} 
and the conformal factor is $e^{f\circ \pi}=:e^{\tilde{f}}$. We also denote
$\tilde{\rho}:=\pi^{\ast}\rho$. Equipping $X_0:=\pi^{-1}(B_g(K,\frac{3R}{4}))$
with the covering metric $\pi^{\ast}h$, we have made it a complete Riemannian
manifold. Moreover, the bounds on the sectional curvature of $\pi^{\ast}h$
remain the same as that of $h$: by (\ref{eqn: kappa_C2}) we have
\begin{align}\label{eqn: sectional_pi_h}
\sup_{\wedge^2TX_0}|\mathbf{K}_{\pi^{\ast}h}|\ \le\
10^9k^2\left(C_0(m,R)^2R^{-2}+\kappa(K,g,R)+
\|\psi\|_{C^2(M)}\right) \theta^{-4}\ <\ \infty.
\end{align}
Therefore, we could start Ricci flow $(X_0,\tilde{h}(t))$ from the initial data 
$(X_0,\pi^{\ast}h)$ by Theorem~\ref{thm: Shi}. Notice that the Ricci flow
$\tilde{h}(t)$ is invariant under the action of $\pi_1(B_g(K,R))$, as
does the initial data $h$; therefore $(X_0,\tilde{h}(t))$ covers
$(B_g(K,\frac{3R}{4}),h(t))$, the original flow obtainted in Lemma~\ref{lem:
start_RF}, and consequently, the existence time of $h(t)$ is the same as the
existence time of $\tilde{h}(t)$, which we will bound from below via the
pseudo-locality theorem (Theorem~\ref{thm: CTY11}).

To apply Theorem~\ref{thm: CTY11} to the Ricci flow $(X_0,\tilde{h}(t))$,
we start with checking the relevant properties satisfied by
$(B_g(K,\frac{3R}{4}),g)$. By our assumption that
$\delta_E=10^{-2}\delta_{AE}R$, we have
for any $\tilde{x}\in X_0$,
\begin{align*}
d_{GH}\left(B_{\pi^{\ast}g}(\pi(\tilde{x}),10^{-1}R),
\mathbb{B}^k(10^{-1}R)\right)\ <\ 10^{-2}\delta_{AE}R,
\end{align*} 
and that $\rank\ \widehat{G}_{\delta_E}(\pi(\tilde{x}))=\rank\
\tilde{\Gamma}_{\delta_E}(\pi(\tilde{x}))=m-k$; therefore, after 
rescaling $\pi^{\ast}g \mapsto 10^{4}R^{-2}\pi^{\ast}g$ and applying
Lemma~\ref{lem: almost_Euclidean}, we see that
\begin{align}\label{eqn: almost_isoperimetric_2}
\forall \tilde{x}\in X_0,\ \forall r\in (0,10^{-2}r_{AE}R],\quad
I_{\pi^{\ast}g}\left(B_{\pi^{\ast}g}(\tilde{x},r)\right)\ \ge\
(1-\varepsilon')I_m.
\end{align}

Now we consider the corresponding bounds on the metric $\pi^{\ast}h$. We first
see that the scalar curvature is uniformly bounded from below. By the standard
formula, we have
\begin{align}\label{eqn: Sc_pi_h}
\begin{split}
\Sc_{\pi^{\ast}h}\ =\
&e^{-2\tilde{f}}\left(\Sc_{\pi^{\ast}g}
-\frac{4(m-1)}{m-2}e^{-\frac{m-2}{2}\tilde{f}} \Delta_{\pi^{\ast}g}
e^{\frac{m-2}{2}\tilde{f}}\right)\\
=\ &e^{-2\tilde{f}} \left[\Sc_{\pi^{\ast}g}-(m-1)
\left(\left(2w''+(m-2)(w')^2\right)|\nabla_{\pi^{\ast}
g}\tilde{\rho}|_{\pi^{\ast}g}^2+2w'\Delta_{\pi^{\ast}g}\tilde{\rho}
\right)\right].
\end{split}
\end{align} 
Since $\pi$ is a covering map, by (\ref{eqn: rho_C2}) we have 
\begin{align*}
R\|\nabla_{\pi^{\ast}g}\tilde{\rho}\|_{C^0(X)}
+R^2\|\Delta_{\pi^{\ast}g}\tilde{\rho}\|_{C^0(X)}\ \le\ 100C_0(m,R).
\end{align*}
Consequently, as $\Rc_{\pi^{\ast}g}\ge -(m-1)\pi^{\ast}g$, we have the scalar
curvature lower bound for $\pi^{\ast}h$ as
\begin{align}\label{eqn: Sc_pi_h_lb}
\Sc_{\pi^{\ast}h}\ \ge\ -10^2m(m-1)C_0(m,R)^2R^{-2}\theta^{-4}\ =:\
C_1(m,R,\theta).
\end{align}
Notice that here $\theta$ is already determined by $\alpha\in (0,10^{-1})$.

 Moreover, we need to control the isoperimetric constant of $\pi^{\ast}h$
 around any given point in $X_0$, and we only need to focus on the region
 $U:=\tilde{\rho}^{-1}((1-\frac{3}{2}\theta,1))\subset X_0$, since 
 $\pi^{\ast}g(\tilde{x})=\pi^{\ast}h(\tilde{x})$ whenever
 $\tilde{\rho}(\tilde{x})\le 1-\frac{3}{2}\theta$.
 Fixing any $\tilde{x}\in U$, let us denote  $s=\tilde{\rho}(\tilde{x})$ and
 define $U_s:=\tilde{\rho}^{-1}(s-\zeta,s+\zeta)$ for the moment, with
 $\zeta=\frac{1}{4}\theta(1-s)$. The key feature for the points in $U_{s}$ is
 the following metric equivalence:
  \begin{align}\label{eqn: metric_equivalence}
  \forall \tilde{y}\in U_{s},\quad 
e^{2w(s-\zeta)}\pi^{\ast}g(\tilde{y})\ \le\ \pi^{\ast}h(\tilde{y})\
\le\ e^{2w(s+\zeta)}\pi^{\ast}g(\tilde{y}).
\end{align}
  As we have $1-\frac{3}{2}\theta<s<1$, and as indicated in (\ref{eqn:
 seeking_beta}), we could consider all radii $r>0$ bounded as
 \begin{align}\label{eqn: good_scale}
 r\ <\ \min\left\{\frac{\theta^2R}{10^6C_0(m,R)},\
 10^{-2}r_{AE}(\varepsilon')R\right\}\ \le\
 \min\left\{\frac{\zeta e^{w(s-\zeta)}R}{10^4C_0(m,R)} ,\
 10^{-2} r_0(\varepsilon')R\right\},
 \end{align}
 where $\zeta=\frac{1}{4}\theta(1-s)$ and $r_0(\varepsilon')$ is
 defined as in (\ref{eqn: choosing_r_0}). 
 
 We now claim that $B_{\pi^{\ast}h}(\tilde{x},r)\subset U_{s}$: suppose
 otherwise, there is some $\tilde{y}\in B_{\pi^{\ast}h}(\tilde{x},r)\backslash
 U_{s}$, we could then let $\gamma:[0,1]\to X_0$ be a minimal 
 $\pi^{\ast}h$-geodesic connecting $\tilde{x}=\gamma(0)$ to
 $\tilde{y}=\gamma(1)$; since $\tilde{x}\in U_s$, there must be a first
 time $t_0\in (0,1)$ such that $\gamma(t_0)\in \partial U_s$ and
 $\gamma([0,t_0))\subset U_{s}$; by the continuity of $\rho$, we know that
 $|s-\tilde{\rho}(\gamma(t_0))|=\zeta$; but by (\ref{eqn: seeking_beta})
 and (\ref{eqn: metric_equivalence}) we have
 \begin{align}\label{eqn: ball_in_Us}
 \begin{split}
 \left|\tilde{\rho}(\tilde{x})-\tilde{\rho}(\gamma(t_0))\right|\ \le\
 &\int_0^{t_0}  \left|\pi^{\ast}g(\nabla_{\pi^{\ast}g}
 \tilde{\rho}(\gamma(t)), \dot{\gamma}(t))\right|\ \text{d}t  \\
 \le\  &\|u'\|_{C^0([0,1])} \|\nabla_g\psi\|_{C^0(B_g(K,R))}
 d_{\pi^{\ast}g}(\tilde{x},\gamma(t_0))\\
  \le\  &10^3C_0(m,R)R^{-1}d_{\pi^{\ast}g}(\tilde{x},\gamma(t_0))\\
 \le\ &10^3C_0(m,R)R^{-1}e^{-w(s-\zeta)}r\\
 \le\ &\frac{\zeta}{10};
 \end{split}
 \end{align}
 this provides a contradiction, and the claim is proven. 
 
 Consequently, on $B_{\pi^{\ast}h}(\tilde{x},r)$ we also have the uniform metric
 equivalence (\ref{eqn: metric_equivalence}), and so we could easily compare the
 volume of subsets in the ball, measured in the two metrics. Now for any region 
 $\Omega\subset B_{\pi^{\ast}h}(\tilde{x},r)$, we could estimate
 \begin{align*}
 |\Omega|_{\pi^{\ast}h}\ \le\
 e^{mw(s+\zeta)}|\Omega|_{\pi^{\ast}g}\quad 
 \text{and}\quad |\partial \Omega|_{\pi^{\ast}h}\ \ge\
 e^{(m-1)w(s-\zeta)}|\partial \Omega|_{\pi^{\ast}g},
 \end{align*}
 and by (\ref{eqn: almost_isoperimetric_2}), we get the \emph{scaling-invariant}
 estimate
 \begin{align*}
 |\partial \Omega|_{\pi^{\ast}h}^m\ \ge\
 \frac{(1-\varepsilon')I_m}{(1+2\theta)^{m(m-1)}}|\Omega|_{\pi^{\ast}h}^{m-1}.
 \end{align*}
 Since $\Omega\subset B_{\pi^{\ast}h}(\tilde{x},r)$ is arbitrarily chosen, by
 our choice of the constants $\varepsilon'$ and $\theta$, we get the control of
 the isoperimetric constant:
 \begin{align}\label{eqn: h_isoperimetric}
 I_{\pi^{\ast}h}\left(B_{\pi^{\ast}h}(\tilde{x},r)\right)\ \ge\ (1-\delta_P)I_m 
 \end{align}
 for any $\tilde{x}\in X_0$ and any $r>0$ in the range specified by (\ref{eqn:
 good_scale}). Notice that the estimate (\ref{eqn: h_isoperimetric}) remains
 unchanged under rescaling of the metric.

Now by (\ref{eqn: Sc_pi_h_lb}), (\ref{eqn: almost_isoperimetric_2}) for the
region where $\pi^{\ast}h=\pi^{\ast}g$ and (\ref{eqn: h_isoperimetric})
for the region $U$, we could apply the pseudo-locality thoerem to obtain a
lower bound of the existence time: Consider the dimensional constant
\begin{align*}
\mu(m,R,\alpha)\ :=\ \min\left\{C_1(m,R,\theta),\ 
\frac{\theta^2R}{10^6C_0(m,R)},\
10^{-2}r_{AE}R\right\},
 \end{align*} 
 and rescale the the flow $(X_0,\tilde{h}(t))$ to $(X_0,\bar{h}(s))$,
 with $s=\mu^{-2}t$ and $\bar{h}(s):=\mu^{-2}\tilde{h}(s)$. Now we have 
\begin{align}
 \Sc_{\bar{h}(0)}\ge -1\quad \text{and}\quad
\forall \tilde{x}\in X_0,\ 
I_{\bar{h}(0)}\left(B_{\bar{h}(0)}(\tilde{x},1)\right)\ \ge\ (1-\delta_{P})I_m.
\end{align}
Applying Theorem~\ref{thm: CTY11}, we see that the the existence time of
the rescaled flow $(X_0,\bar{h}(s))$ is bounded below by $\varepsilon_{P}^2>0$,
as previously discussed. Now scaling back, we see that the
existence time $T$ for the flow $(X_0,\tilde{h}(t))$ is bounded below as 
\begin{align}
T\ \ge\ \mu^2\varepsilon_{P}^{2}\ =:\ \varepsilon_{E}^2(m,R,\alpha),
\end{align}
which is a constant only depending on $m$ and $\alpha$, whence the desired
lower bound of the existence time for the original Ricci flow. Moreover, we
have the curvature estimate
\begin{align*}
\forall s\in (0,\varepsilon_P^2],\quad
\sup_{X_0}\left|\Rm_{\bar{h}(s)}\right|_{\bar{h}(s)}\ \le\ \alpha
s^{-1}+\varepsilon_P^{-2}.
\end{align*}
Rescaling back and restricting our attention to $B_g(K,\frac{R}{4})$, which is
unaffected by the conformal transformation, we get the desired curvature control.
\end{proof}

\begin{remark}
The application of the pseudo-locality theorem in proving the existence of Ricci
flows seems to appeare in the work \cite{Topping10} of Topping for the first
time. Here we notice that checking the completeness of the pull-back metric on
the local universal covering is necessary for the application of
Theorem~\ref{thm: CTY11}. An example where the pseudo-locality theorem
fails for incomplete Ricci flows is given by Topping; see e.g. \cite[Example
0.3]{HKRX18}.
\end{remark}

\begin{remark}\label{rmk: TianWang}
It seems that the initially assumed Ricci lower bound allows us to apply the 
version of the pseudo-locality theorem due to Tian and the second-named author 
(\cite[Proposition 3.1]{TianWang}) directly, as is done in our previous result
\cite[Theorem 1.4]{HW20a}. But the conformal transformation involved here
prevents us from doing so --- we do not have a uniform $C^{2}$ control of
the cut-off function $\psi$ in the definition of the conformal
factor: with $\tilde{f}=\pi^{\ast}(w\circ u\circ \psi)$, the lower
bound of the conformally transformed Ricci curvature
\begin{align}\label{eqn: Rc_lb_h}
\Rc_{\pi^{\ast}h}\ =\
\Rc_{\pi^{\ast}g}-(m-2)\left(\nabla^2\tilde{f}-\nabla \tilde{f}\otimes
\nabla \tilde{f}\right) -\left(\Delta \tilde{f} +(m-2)|\nabla
\tilde{f}|^2\right)\pi^{\ast}g
\end{align}
depends on the full Hessian bound $\|\nabla^2_g\psi\|_{C^0(B_g(K,R)}$, which
has no uniform \emph{a priori} estimate. In contrast, the scalar curvature of
$\pi^{\ast}h$ only depends on $\|\Delta_g\psi\|_{C^0(B_g(K,R))}$, which is
indeed uniformly bounded and leaves a chance to apply Perelman's original
version of the pseudo-locality theorem.
\end{remark}

Extracting the technical essence involved in the proof of Theorem~\ref{thm:
main1}, we can generalize this theoerm to the setting where only the initial
\emph{scalar curvature} lower bound is known.
\begin{theorem}\label{thm: sc}
For any $\alpha\in (0,10^{-1})$, $C>0$ and $R>0$ there are constants 
$\delta_{SC},\varepsilon_{SC}\in (0,1)$ solely determined by $m, C, R$ and
$\alpha$, such that if $K$ is a compact subset of an $m$-dimensional Riemannian
manifold $(M,g)$ with scalar curvature bounded below by $-1$, satisfying
\begin{enumerate}
  \item the universal covering space of $B_g(K,R)$ has isoperimetric
  constant no less than $(1-\delta_{SC})I_m$ everywhere at scale $1$, and
  \item there is a cut-off fuction $\rho$ supported in $B_g(K,\frac{3R}{4})$
  such that  
  \begin{align*}
  0\ \le \rho\ \le\ 1,\quad \rho|_{B_g(K,\frac{R}{4})}\equiv
  1\quad \text{and}\quad R\|\rho\|_{C^1(M)}+R^2\max_{M}\Delta_g\rho\ \le\ C,
  \end{align*}  
\end{enumerate}
then there is a Ricci flow solution with initial data $(B_g(K,\frac{R}{4}),g)$,
existing at least up to $\varepsilon_{SC}^2$, and satisfying
\begin{align*}
\forall t\in (0,\varepsilon_{SC}^2],\quad
\sup_{B_g(K,\frac{R}{4})}\left|\Rm_{g(t)}\right|_{g(t)}\ \le\ \alpha
t^{-1}+\varepsilon_{SC}^{-2}.
\end{align*}
\end{theorem}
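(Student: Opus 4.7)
The plan is to rerun the proof of Theorem~\ref{thm: main1} with two simplifications. Assumption (2) supplies the cut-off function directly, so the Cheeger-Colding construction --- which required the Ricci lower bound --- is no longer needed. Assumption (1) supplies the almost Euclidean isoperimetric constant on the universal cover directly, so Lemma~\ref{lem: almost_Euclidean} (which invoked Colding's volume continuity, Cavalletti-Milman, and the maximal-rank condition) is bypassed entirely. The overall scheme --- conformal blow-up of the boundary, lifting to the universal cover, applying Perelman's pseudo-locality --- is unchanged.

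For the local existence step, I would follow Lemma~\ref{lem: start_RF} using the given $\rho$ in place of the Cheeger-Colding cut-off. With $\theta = \theta(m,\alpha)$ fixed as in (\ref{eqn: define_theta}), set $f_\theta := w_\theta \circ \rho$ and $h := e^{2f_\theta} g$ on $B_g(K,\tfrac{3R}{4})$; the conformal factor blows up along $\partial B_g(K,\tfrac{3R}{4})$, making $h$ complete. Since $M$ is a smooth manifold and $\overline{B_g(K,\tfrac{3R}{4})}$ is compact, all higher-order derivatives of $\rho$ and of the curvature of $g$ are bounded (non-uniformly) on this set, so the sectional curvature of $h$ is bounded and Theorem~\ref{thm: Shi} produces a complete Ricci flow $h(t)$ existing up to some (non-uniform) time $T_0 > 0$.

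To promote $T_0$ to a uniform lower bound, I would pull back to the universal cover $\pi : X \to B_g(K, R)$, obtaining a Ricci flow $\tilde{h}(t)$ on $X_0 := \pi^{-1}(B_g(K,\tfrac{3R}{4}))$ that covers $h(t)$. The central observation --- already anticipated in Remark~\ref{rmk: TianWang} --- is that the conformal change formula (\ref{eqn: Sc_pi_h}) for the scalar curvature involves only $|\nabla_{\pi^{\ast} g}\tilde{\rho}|^2$ and $\Delta_{\pi^{\ast} g}\tilde{\rho}$, not the full Hessian of $\tilde{\rho}$. Combined with $\Sc_{\pi^{\ast} g} \geq -1$ and the sign $w'_\theta > 0$, the one-sided Laplacian bound in Assumption (2) therefore yields a scalar curvature lower bound of the form $\Sc_{\pi^{\ast} h} \geq -C_1(m, R, C, \alpha)$. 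The almost Euclidean isoperimetric control on $(X_0, \pi^{\ast} h)$ at small scales is then transferred from Assumption (1) by the metric-equivalence and ball-containment argument of (\ref{eqn: metric_equivalence})--(\ref{eqn: ball_in_Us}): inside every ball $B_{\pi^{\ast} h}(\tilde{x}, r)$ with $r$ smaller than some $\mu = \mu(m, R, C, \alpha) > 0$, the metrics $\pi^{\ast} g$ and $\pi^{\ast} h$ differ by at most a factor of $(1+2\theta)^2$, so the scale-invariant isoperimetric constant passes from $\pi^{\ast} g$ to $\pi^{\ast} h$ with only a mild multiplicative loss, giving $I_{\pi^{\ast} h}(B_{\pi^{\ast} h}(\tilde{x}, r)) \geq (1 - \delta_P) I_m$.

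Rescaling $\tilde{h}(t)$ by $\mu^{-2}$ and applying Theorem~\ref{thm: CTY11} on the complete non-compact flow $(X_0, \bar{h}(s))$ then bounds the existence time below by $\mu^2 \varepsilon_P^2 =: \varepsilon_{SC}^2$, with the stated curvature estimate. Since $\tilde{h}(t)$ covers $h(t)$ and $h \equiv g$ on $B_g(K, \tfrac{R}{4})$, restricting to $B_g(K, \tfrac{R}{4})$ yields the desired Ricci flow and curvature control. The main --- and essentially the only --- obstacle is the scalar curvature computation in the previous paragraph: it is precisely because the conformal variation of $\Sc_g$ is linear in $\Delta_g \rho$ rather than in $\nabla^2_g \rho$ that the scalar-curvature version is accessible at all, whereas a Ricci-curvature lower bound for $\pi^{\ast} h$ would additionally demand a two-sided Hessian bound on $\rho$ that Assumption (2) does not provide.
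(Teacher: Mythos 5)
Your proposal matches the paper's sketch of proof: the given $\rho$ replaces the Cheeger--Colding cut-off, Assumption~(1) directly supplies the almost-Euclidean isoperimetric estimate on the universal cover so that Lemma~\ref{lem: almost_Euclidean} is bypassed, and pseudo-locality is applied to the pulled-back conformal flow exactly as in the proof of Theorem~\ref{thm: main1}; you also correctly isolate the essential point that the conformal variation of scalar curvature depends on $\Delta_g\rho$ rather than $\nabla^2_g\rho$.

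There is, however, one concrete orientation slip that you carry over from the statement of Theorem~\ref{thm: sc}. With $\rho\equiv 1$ on $B_g(K,R/4)$ and $w_\theta(s)\to\infty$ as $s\to 1^-$, the factor $f_\theta := w_\theta\circ\rho$ that you write down is $+\infty$ on $B_g(K,R/4)$, not at $\partial B_g(K,3R/4)$; the $\rho$ of Assumption~(2) plays the role of $\psi$ in Lemma~\ref{lem: start_RF}, not of the Lemma's $\rho = u\circ\psi$, so the conformal factor should be $f_\theta = w_\theta\circ(1-\rho)$ (or $w_\theta\circ u\circ\rho$ with $u$ decreasing). With the corrected orientation the Laplacian term in (\ref{eqn: Sc_pi_h}) becomes $-2w_\theta'\Delta_g\rho$ with $w_\theta'>0$, so the one-sided bound actually required for the scalar curvature lower bound is $\Delta_g\rho$ bounded \emph{below} --- consistent with the Laplacian comparison $\Delta_g d^2\le 2m$ cited in the Remark following Lemma~\ref{lem: start_RF} --- whereas Assumption~(2) as printed bounds $\max_M\Delta_g\rho$ from above. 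The sign in the stated hypothesis appears to be a typo, and once either the boundary normalization of $\rho$ or the direction of the Laplacian bound is reversed, your argument (and the paper's) goes through unchanged.
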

\begin{proof}[Sketch of proof]
To indicate the proof, while Lemma~\ref{lem: start_RF} directly enables us to
start a Ricci flow with initial data $(K,g)$, we notice that the conformally
transformed metric using $\tilde{\rho}$ on the universal covering of
$B_g(K,R)$ has scalar curvature lower bound given by (\ref{eqn: Sc_pi_h}), and
Assumption (2) provides a uniform lower bound of this scalar curvature, just as
(\ref{eqn: Sc_pi_h_lb}); on the other hand, Assumption (1) together with the
$\|\rho\|_{C^1(M)}$ bound ensure that the isoperimetric constant of the
conformally transformed metric (on the universal covering) is sufficiently
close to the Euclidean isoperimetric constant, verifying (\ref{eqn:
h_isoperimetric}) --- we have all the ingredients ready to apply the
pseudo-locality theorem (Theorem~\ref{thm: CTY11}) and obtain the desired 
existence time and curvature bounds of the Ricci flow.
\end{proof}
Similar to Theorem~\ref{thm: main1}, this theorem may serve as a smoothing tool
in the study of the uniform behavior of Riemannian manifolds whose scalar
curvature are locally bounded below, a program initiated by
Gromov~\cite{Gromov18}; see also \cite{Bamler16, Sormani17}.

\section{Initial data locally collapsing to orbifold model spaces}
This section is devoted to the proof of Theorem~\ref{thm: main2}, based on the
discussion in the last section. We will prove the theorem by a contradiction
argument. Now suppose for some $R>0$ fixed we have a sequence of data
$\left\{K_i\subset (M_i,g_i)\right\}$ satisfying the assumptions of the
theorem, but with the Ricci flow existence time on $B_{g_i}(K_i,\frac{R}{4})$
decaying to zero. Notice that by Lemma~\ref{lem: start_RF}, as long as
the ambient manifold has Ricci curvature uniformly bounded below, we could
start the Ricci flow regardless of the specific geometry of the region
$B_{g_i}(K_i,\frac{R}{4})$. Now letting $\pi_i: X_i\to B_{g_i}(K_i,R)$ denote
the universal covering equipped with the covering metric $\pi_i^{\ast}g_i$ and
setting $X_{i,0}=\pi_i^{-1}(B_{g_i} (K_i,\frac{3R}{4}))$, if $X_{i,0}$ were 
everywhere almost Euclidean at a fixed scale, i.e. (\ref{eqn:
almost_isoperimetric_2}) holds in the setting of Theorem~\ref{thm: main2},
then applying Theorem~\ref{thm: CTY11} to the conformally transformed
metric, we could uniformly bound the Ricci flow existence time from below. 
Therefore, by the contradiction hypothesis, we could find $p_i\in
B_{g_i}(K_i,\frac{3R}{4})$ such that geodesic balls centered at $\tilde{p}_i\in
\pi_i^{-1}(p_i)\subset X_{i,0}$ with a fixed size cannot be locally almost
Euclidean.

More specifically, we may assume that there is some $G<O(k)$ with $|G|\le l$
such that 
\begin{align*}
d_{GH}\left(B_{g_i}(p_i,4^{-1}R), \mathbb{B}^k(4^{-1}R)\slash
G \right)\ =\ \delta_i\ \to\ 0\quad \text{as}\quad i\ \to\ \infty,
\end{align*}
however, for $\varepsilon'=\frac{\delta_P(\alpha)}{4}$ and any $r\in
(0,\frac{R}{10})$ fixed, the isoperimetric constant satisfies 
\begin{align}\label{eqn: contradiction_epsilon}
I_{\pi^{\ast}g_i}\left(B_{\pi^{\ast}g_i}(\tilde{p}_i,r)\right)\
\le\ (1-\varepsilon')I_m.
\end{align}
 On the other hand, from Assumption (1) of Theorem~\ref{thm: main2}, we
have surjective group homomorphisms $\phi_i: \pi_1(B_{g_i}(K_i,R),p_i)
\twoheadrightarrow G$, and consequently we have 
$\pi_1(B_{g_i}(K_i,R),p_i)\slash \ker \phi_i=H_i\cong G$. To facilitate
our argument, we will let $H$ denote all $H_i$ as they are all isomorphic to
one another, and let $\phi:H\xrightarrow{\approx} G$ denote the group
isomorphism induced by $\phi_i$ for $i$ large enough (after possibly passing to
a sub-sequence); we will also perform the rescaling $g\mapsto 10^5R^{-2}g=:g'$.

Since $\ker \phi_i \trianglelefteq \pi_1(B_{g_i}(K_i,R),p_i)$, the
covering map $\pi_i$, when restricted to the $g'$-metric closure 
$\overline{\pi_i^{-1}(B_{g_i'}(p_i,50))}$, induces a normal covering
$\pi_{i,0}:\overline{\pi_i^{-1}(B_{g_i'}(p_i,50))}\to
Y_i:=\overline{\pi_i^{-1}(B_{g_i'}(p_i,50))}\slash \ker \phi$, equipped with
the quotient metric $\bar{g}_i$ of $\pi^{\ast}_ig_i'$. Clearly, the deck
transformation group of this covering is $\ker \phi_i$. Moreover, the
deck transformation group of the covering $\bar{\pi}_i: Y_i\to
B_{g_i'}(p_i,50)\equiv Y_i\slash H$ is nothing but $H$, which is isomorphic to
the local orbifold group $G$. Clearly,  $H\le Isom(Y_i,\bar{g}_i)$.

To produce a contradiction, we will show that for $i$ large enough, 
$\left\{B_{\bar{g}_i}(\bar{p}_i,10)\subset Y_i\right\}$ is sufficiently
Gromov-Hausdorff close to $\mathbb{B}^k(10)$, for some $\bar{p}_i\in
\bar{\pi}_i^{-1}(p_i)\subset Y_i$ fixed. Applying Lemma~\ref{lem:
almost_Euclidean} to the normal covering $\pi_{i,0}$, we could then find an
estimate contradicting (\ref{eqn: contradiction_epsilon}) for some $r>0$ fixed.
To prove such Gromov-Hausdorff proximity we rely on yet another
contradiction argument: assuming
\begin{align}\label{eqn: contradiction_delta_AE}
\liminf_{i\to \infty} d_{GH}\left(B_{\bar{g}_i}(\bar{p}_i,10), \mathbb{B}^k(10)
\right)\ \ge\ \delta_{AE}\left(2^{-1}\varepsilon'\right),
 \end{align} 
  we will deduce a contradiction from the situation summarized in the
  following diagram:
\begin{align}\label{eqn: orbifold_diagram}
\begin{split}
\begin{xy}
<0em,0em>*+{(Y_i,\bar{g}_i)}="x", 
<-11em, 0em>*+{(\pi_i^{-1}(B_{g_i'}(p_i,50)),\pi_i^{\ast}g_i')}="w",
<0em,-5em>*+{(B_{g_i'}(p_i,50),g_i')}="v", 
<11em,0em>*+{(\overline{\mathbb{B}^k(50)},g_{Euc})}="y",
<11em,-5em>*+{(Z,d_Z)}="u",
 "w";"x" **@{-} ?>*@{>}?<>(.5)*!/_0.5em/{\scriptstyle \pi_{i,0}}, 
 "w";"v" **@{-} ?>*@{>} ?<>(.5)*!/_0.5em/{\scriptstyle \pi_i}, 
 "y";"u" **@{-} ?>*@{>} ?<>(.5)*!/_0.5em/{\scriptstyle \slash G}, 
 "v";"u" **@{-} ?>*@{>} ?<>(.5)*!/_0.5em/{\scriptstyle \delta_i-GH\ close},
  "u";"v" **@{-} ?>*@{>} ?<>(.5)*!/_0.5em/{ },
 "x";"y" **@{-} ?>*@{>} ?<>(.5)*!/_0.5em/{\scriptstyle \delta_{AE}-GH\
 apart}, "y";"x" **@{-} ?>*@{>} ?<>(.5)*!/_0.5em/{ }, 
  "x";"v" **@{-} ?>*@{>} ?<>(.5)*!/_0.5em/{\scriptstyle \bar{\pi}_i}, 
\end{xy}
\end{split}
\end{align}
 
Since $\Rc_{\bar{g}_i}\ge -(m-1)\bar{g}_i$, we have
$\left\{(Y_i,\bar{g}_i,\bar{p}_i)\right\}$ sub-converges to some $(Y_{\infty},
d_{\infty},\bar{p}_{\infty})$ in the pointed and $H$ equivariant
Gromov-Hausdorff topology. Especially, since $B_{g'_i}(p_i,50)\to
Z:=\overline{\mathbb{B}^k(50)}\slash G$ in the pointed Gromov-Hausdorff
topology, we see that $Y_{\infty}\slash H\equiv Z$. We will identify
$Y_{\infty}$ with its completion under $d_{\infty}$, making it a compact and
connected metric space.

We now aim at proving the following
\begin{lemma}\label{lem: isometry}
$B_{d_{\infty}}(\bar{p}_{\infty},10)\ \equiv\ \mathbb{B}^k(10)$.
\end{lemma}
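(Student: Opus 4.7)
The plan is to promote the pointed Gromov--Hausdorff convergence of the $Y_i$ to a measured limit with enough synthetic curvature control that the $H$-action, combined with the identification of the quotient with $Z=\overline{\mathbb{B}^k(50)}/G$, forces the limit ball to be Euclidean. The main tool is the stability and rigidity of the $RCD(K,N)$ condition under equivariant measured Gromov--Hausdorff convergence.

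Since each $(Y_i,\bar g_i)$ is a Riemannian manifold with $\Rc_{\bar g_i}\ge -(m-1)\bar g_i$, it is an $RCD(-(m-1),m)$ metric measure space with its canonical volume measure; after normalising at $\bar p_i$ and passing to a subsequence, stability of the synthetic Ricci bound (Ambrosio--Gigli--Savar\'{e}, Erbar--Kuwada--Sturm, Gigli--Mondino--Savar\'{e}) yields a limit $(Y_\infty,d_\infty,\mathfrak m_\infty,\bar p_\infty)$ which is $RCD(-(m-1),m)$. The finite isometric $H$-action on each $Y_i$ passes, by the equivariant version of Gromov's compactness theorem (Fukaya--Yamaguchi), to an isometric $H$-action on $Y_\infty$, and the quotient coincides on the relevant ball with $Z$, because $Y_i/H=B_{g_i'}(p_i,50)\to Z$ by the contradiction hypothesis.

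Next I would extract a local Euclidean structure. Let $\Omega\subset\mathbb{B}^k(50)$ be the open dense set on which $G$ acts freely and let $\tilde\Omega\subset Y_\infty$ be the preimage of $\Omega/G$ under the limit projection. On $\tilde\Omega$ the $H$-action is free, so $\tilde\Omega\to\Omega/G$ is a genuine finite Riemannian covering of flat spaces, whence $\tilde\Omega$ is locally isometric to $\mathbb R^k$. In particular $\dim_{\mathcal H}Y_\infty=k$, and $\mathfrak m_\infty$ is a constant multiple of $\mathcal H^k$ on $\tilde\Omega$, which is exactly the non-collapsedness input of \cite{DPG18}. Applied to $Y_\infty$ this implies that every tangent cone at every point is a $k$-dimensional non-collapsed $RCD(0,k)$ metric cone; via the finite cover $Y_\infty\to Z$, these cones are pulled back from the tangent cones of $Z$. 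Since $Z$ is locally $\mathbb{B}^k/G$ whose tangent cones are Euclidean quotients $\mathbb R^k/G_z$, and since the $H$-lift undoes this quotient, each tangent cone of $Y_\infty$ is forced to be $\mathbb R^k$.

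Finally I would invoke the volume-cone-implies-metric-cone rigidity on non-collapsed RCD spaces of \cite{DPG18} to propagate this Euclidean tangent-cone structure to a neighbourhood of every point, and then use that $\tilde\Omega$ is connected and dense to assemble a global isometry $B_{d_\infty}(\bar p_\infty,10)\cong\mathbb{B}^k(10)$ intertwining the $H$- and $G$-actions. The main obstacle is precisely this last step: points of $Y_\infty$ lying over the singular locus of $Z$ can be fixed by nontrivial subgroups of $H$, and that singular locus need not be discrete or smooth (cf.\ Remark~\ref{rmk: general_orbifold}). This is exactly why the recent non-collapsed RCD machinery of \cite{DPG18} must replace the classical orbifold-cover rigidity available only in the isolated-singularity setting of \cite{BKN89}.
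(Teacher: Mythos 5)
Your strategy is the right one in broad outline — reduce to non-collapsed $RCD$ theory and invoke \cite{DPG18} — but the chain of implications has a real gap, and it is precisely the gap that the paper spends its Claims~\ref{clm: fix_center}--\ref{clm: RCD} filling in. You observe correctly that $(Y_\infty,d_\infty,\mathfrak m_\infty,\bar p_\infty)$ is an $RCD(-(m-1),m)$ space by stability, where $\mathfrak m_\infty$ is the renormalized limit measure. But \cite[Theorem 1.5]{DPG18} requires a non-collapsed $RCD(0,k)$ structure, i.e.\ the reference measure must be $\mathcal H^k$ and the dimension parameter must be $k$, and \emph{neither} follows from stability: the stability limit has dimension parameter $m\ne k$, and $\mathcal H^m\equiv 0$ so the limit is as far from non-collapsed in the $RCD(K,m)$ sense as possible. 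Your bridge --- that $\mathfrak m_\infty$ is a constant multiple of $\mathcal H^k$ on $\tilde\Omega$, hence ``exactly the non-collapsedness input'' --- is not established (for a collapsed Ricci limit the density of $\nu$ with respect to $\mathcal H^k$ on the regular set is generally non-constant, and even if it were constant, you would still need to upgrade $RCD(-(m-1),m)$ to $RCD(0,k)$). The paper flags this explicitly right before Claim~\ref{clm: RCD}: it is not immediate that the metric limit equipped with $\mathcal H^k$ is an $RCD(0,k)$ space. The substance of the paper's proof is to prove this by hand --- a segment inequality for $\mathcal H^k$ (Claim~\ref{clm: segment}) built from the good tubular neighborhoods of Claim~\ref{clm: regular_connected}, used to verify the Sobolev-to-Lipschitz and Bakry--\'Emery conditions.

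The tangent-cone argument that is meant to replace that work also has a hole. At a point $\bar x$ over a singular $z\in Z$, you have a metric cone $C$ with an isometric $H_{\bar x}$-action such that $C/H_{\bar x}\cong\mathbb R^k/G_z$, and you claim ``the $H$-lift undoes this quotient,'' forcing $C\cong\mathbb R^k$. That is not automatic: $C/H_{\bar x}\cong\mathbb R^k/G_z$ with $H_{\bar x}\cong G_z$ does not in general pin down $C$ without extra information about the action or the cross-section, and the paper never asserts Euclidean tangent cones at singular points. Instead it takes the opposite route: it first establishes (using Claim~\ref{clm: fix_center}) the cone structure of $W_\infty$ at the vertex $\bar p_\infty$, then, once $(W_\infty,d_\infty,\mathcal H^k)$ is shown to be $ncRCD(0,k)$, it computes $\mathcal H^k(B_{d_\infty}(\bar p_\infty,20))=|H|\cdot|B_{d_Z}(z,20)|=|\mathbb B^k(20)|$ from the orbifold structure, and applies \cite[Theorem 1.5]{DPG18} as a \emph{global volume rigidity} theorem rather than ``propagating'' Euclidean tangent cones, which has no clean formulation in this setting. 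Your proposal would need to reproduce the paper's direct verification of the $ncRCD(0,k)$ structure (or supply a genuinely different mechanism) before any of the \cite{DPG18} machinery applies.
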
 
It seems not obvious how to re-construction
$B_{d_{\infty}}(\bar{p}_{\infty},10)$ directly out of the quotient and the group
action, and we need to rely on the metric measure property of $Y_{\infty}$, 
appealing to the theory of $RCD$ spaces. Eventually, we will show that
$\overline{B_{d_{\infty}}(\bar{p}_{\infty},20)}$ is a non-collapsing $RCD(0,k)$
space, but as its $k$-dimensional Hausdorff measure is equal to the volume of
$\mathbb{B}^k(20)$, by the recent work \cite{DPG18} we can conclude that
$B_{d_{\infty}}(\bar{p}_{\infty},10)$ is isometric to the $k$-Euclidean
$10$-ball.

\begin{proof}
Notice that if $G<O(k)$ fixes no point on $\mathbb{S}^{k-1}$, then
$\mathbb{S}^{k-1}\slash G$ is a manifold and $Y_{\infty}\equiv
\overline{\mathbb{B}^k(50)}$. The situation is more complicated when $G$ does
not act on $\mathbb{S}^{k-1}$ freely; see Remark~\ref{rmk: general_orbifold}.
In this case, since $G<O(k)$, the fixed point set of any element of $G$ is a
vector sub-space of $\mathbb{R}^k$.
Especially, the singular part $\Sigma$ of $Z$ is cut out by sub-spaces of
$\mathbb{R}^k$. Consequently, every $x\in Z$ has a small radius
$r_x>0$ such that $B_{d_Z}(x,r_x)$ is the geodesic $r_x$-ball centered at
$x$ in a metric cone with vertex $x$. 


On the other hand, for the finite quotient $\bar{\pi}_{\infty}:Y_{\infty}\to Z$ 
we may decompose $Y_{\infty}$ into a union of the orbit of a fundamental domain
$\Lambda$, i.e. $Y=\cup_{\gamma\in H}\gamma.\Lambda$, and for any regular point 
$y\in Y_{\infty}$ there is a \emph{unique} $\gamma\in H$ such that $\gamma.y\in
\Lambda$ --- here we put $\mathcal{R}:=\left\{y\in Y_{\infty}:\ \forall
\gamma\in H,\ \gamma.y\not=y\right\}$ as the regular part of $Y_{\infty}$,
and the singular part is $\mathcal{S}:=Y_{\infty}\backslash
\mathcal{R}$. Notice that for $\gamma\in H\backslash \{Id_{Y_{\infty}}\}$, 
$\gamma.\Lambda$ may intersect $\Lambda$ non-trivially, and the intersection is
contained in $\mathcal{S}$. Moreover, $\bar{\pi}_{\infty}|_{\Lambda}$ is a
bijective local isometry onto $Z$, and we notice that $\bar{\pi}_{\infty}$ maps
$\mathcal{R}$ and $\mathcal{S}$ respectively to the regular and singular
parts of $Z$. It is also clear that $\mathcal{R}$ is an open subset.

We begin with the following
\begin{claim}\label{clm: fix_center}
$\bar{p}_{\infty}$ is a fixed point under the action of $H\le
Isom(Y_{\infty},\bar{p}_{\infty})$. 
\end{claim}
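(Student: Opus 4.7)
The goal is to show $\gamma.\bar p_\infty=\bar p_\infty$ for every $\gamma\in H$. By the $H$-equivariant Gromov--Hausdorff convergence $(Y_i,\bar p_i,H)\to(Y_\infty,\bar p_\infty,H)$, this reduces to establishing
\begin{equation*}
\lim_{i\to\infty}d_{\bar g_i}(\gamma.\bar p_i,\bar p_i)\ =\ 0\quad\text{for each }\gamma\in H.
\end{equation*}
The key geometric input is that $p_i\to p_\infty$ corresponds, under the convergence $B_{g_i'}(p_i,50)\to Z=\overline{\mathbb{B}^k(50)}/G$, to the orbifold cone point $[0]$; since every element of $G<O(k)$ fixes the origin $0\in\R^k$, the preimage of $[0]$ under the smooth orbifold cover $\overline{\mathbb{B}^k(50)}\to Z$ is the single point $\{0\}$.

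The plan is to bound $d_{\bar g_i}(\gamma.\bar p_i,\bar p_i)$ by the length of a short loop in $B_{g_i'}(p_i,50)$ based at $p_i$ whose class in $H_i$ equals $\gamma$. For any $\gamma\in H_i$ with $\phi_i(\gamma)=g\in G$ and any small $r>0$, I would take $x\in\R^k$ with $|x|=r$, then form the concatenation in $Z$ of a minimizing geodesic from $[0]$ to $[x]$ with one from $[g\cdot x]=[x]$ back to $[0]$, giving a loop at $[0]$ of length $2r$ whose lift to $\overline{\mathbb{B}^k(50)}$ starting at $0$ ends at $g\cdot 0=0$. Through the $\delta_i$-GH approximation, this can be realized as a loop $\alpha_{i,\gamma}$ at $p_i$ of length at most $2r+O(\delta_i)$; lifting $\alpha_{i,\gamma}$ to $Y_i$ from $\bar p_i$ gives a path of the same length joining $\bar p_i$ to $\gamma.\bar p_i$, once the $H_i$-class of $\alpha_{i,\gamma}$ is matched with $\gamma$ via the identification $H_i\cong G$ from $\phi_i$. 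Letting first $i\to\infty$ and then $r\to 0$ produces the required estimate.

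The main obstacle will be justifying that the lifted loop $\alpha_{i,\gamma}$ indeed represents the prescribed class $\gamma$; this is subtle because, per Remark~\ref{rmk: general_orbifold}, $G$ may have non-trivial fixed points on $\mathbb{S}^{k-1}$, so $\overline{\mathbb{B}^k(50)}\to Z$ is not a topological covering and the deck-transformation bookkeeping along GH approximations requires more than a naive lift. A robust substitute is to argue directly in the limit: by equivariant convergence, the isotropy $H_{\bar p_\infty}\le H$ acts on a neighborhood $V$ of $\bar p_\infty$ in $Y_\infty$ with $V/H_{\bar p_\infty}$ equal to a neighborhood of $[0]$ in $Z\approx\R^k/G$. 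If $H_{\bar p_\infty}\subsetneq H$, then $|H|/|H_{\bar p_\infty}|>1$ distinct points of $H.\bar p_\infty$ lie over $[0]$, and the induced connected cover of a neighborhood of $[0]$ would yield a proper quotient of $\pi_1^{\mathrm{orb}}(\R^k/G,[0])=G$ of index $|H_{\bar p_\infty}|$, contradicting that the covering degree of $Y_\infty\to Z$ is $|H|=|G|$. Hence $H_{\bar p_\infty}=H$, and $\bar p_\infty$ is fixed.
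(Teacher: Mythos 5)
Your proposal diverges from the paper's proof, and both of your proposed routes have genuine gaps.

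Your first route (loop lifting along GH approximations) is one you yourself flag as incomplete, and the concern is well founded. But the problem is actually worse than bookkeeping of deck transformations along GH approximations. The loop you describe in $Z$ — out from $[0]$ to $[x]$ along a minimizing geodesic and back from $[g\cdot x]=[x]$ to $[0]$ along one — retraces the same segment of $Z$ and is null-homotopic in the topological sense; its lift to $\overline{\mathbb{B}^k(50)}$ based at $0$ ends at $0$ regardless of which $g$ you chose, precisely because $g\cdot 0=0$ for all $g\in G$. So the construction does not single out a prescribed $g$, and there is no mechanism to force the class in $H_i$ of the approximating loop $\alpha_{i,\gamma}$ to equal the target $\gamma$. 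This is not a matter of making the lift argument careful; the construction does not even produce the right object at the level of $Z$ itself.

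Your second route (orbifold covering degree) is also not a proof. The asserted contradiction — that having $|H|/|H_{\bar p_\infty}|>1$ points over $[0]$ contradicts ``covering degree $|H|=|G|$'' — does not follow. For any subgroup $K\le G$, $\mathbb{B}^k(r)/K\to \mathbb{B}^k(r)/G$ is a degree-$[G:K]$ cover, so there is nothing in orbifold covering theory alone that rules out $|H_{\bar p_\infty}|<|G|$; such a cover simply has residual orbifold singularity at the central fiber. To forbid this you would need to already know that $Y_\infty$ is smooth (or has no orbifold singularity) near $\bar p_\infty$, but establishing exactly that is the point of Lemma~\ref{lem: isometry}, of which this claim is the first step. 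So the argument is circular. Also, ``proper quotient of $\pi_1^{\mathrm{orb}}$ of index $|H_{\bar p_\infty}|$'' does not parse: covers correspond to subgroups, not quotients, and quotients have no index.

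The paper's own proof is entirely different and works at the metric level, avoiding covering-space considerations at singular points. It argues by contradiction: if $\gamma.\bar p_\infty\neq\bar p_\infty$ for some $\gamma\in H$ with $\overline{\gamma.\Lambda}\cap\overline{\Lambda}\neq\emptyset$, then both $\bar p_\infty$ and $\gamma.\bar p_\infty$ lie in the singular (non-trivial isotropy) set $\mathcal{S}$, the minimal geodesic between them has interior in $\mathcal{S}$, and pushing it down by $\bar\pi_\infty$ produces a geodesic loop in $\Sigma$ based at the cone vertex $z=[0]$. But geodesics in the cone $\Sigma$ emanating from the vertex are straight rays, so no geodesic loop at $z$ exists. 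The claim is then propagated to all of $H$ by a chain argument on translates of the fundamental domain. If you want to pursue an approach via the group action rather than covering theory, the ingredient you are missing is this cone-vertex rigidity, which is what the paper uses in place of your covering-degree count.
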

\begin{proof}[Proof of the claim]
We begin with considering any $\gamma\in H$ such that
$\overline{\gamma.\Lambda}\cap\overline{\Lambda}\not=\emptyset$ --- by the
connectedness of $Y_{\infty}$ we can always find such a $\gamma$. 
If $\gamma.\bar{p}_{\infty}\not=\bar{p}_{\infty}$, then
$\gamma.\bar{p}_{\infty}\in \partial \Lambda\backslash \{\bar{p}_{\infty}\}$, as
$int(\Lambda)$ consists of regular points. Since
$\bar{\pi}_{\infty}(\bar{p}_{\infty})
=\bar{\pi}_{\infty}(\gamma.\bar{p}_{\infty})=z\in \Sigma$, we must have
$\gamma.\bar{p}_{\infty}\in \mathcal{S}$. As the fixed point set, $\mathcal{S}$
must be totally geodesic, and thus the minimal geodesic segment $\sigma$
connecting $\bar{p}_{\infty}$ to $\gamma.\bar{p}_{\infty}$ has its interior
entirely lying in $\mathcal{S}$. Since $\bar{\pi}_{\infty}(\mathcal{S})=\Sigma$ 
and $\bar{\pi}_{\infty}(\bar{p}_{\infty})
=\bar{\pi}_{\infty}(\gamma.\bar{p}_{\infty})=z$, under the local isometry
$\bar{\pi}_{\infty}$ the geodesic segment $\sigma$ becomes a smooth geodesic
loop $\bar{\pi}_{\infty}\circ \sigma$ based at $z\in Z$, whose interior lying in
$\Sigma$. But this is impossible because any geodesic segment of $\Sigma$
emanating from the vertex is a straight line sigment, and in particular no
geodesic loop in $\Sigma$ can be based at $z\in Z$.

Setting $H_1=\left\{\gamma\in H:\ \overline{\gamma.\Lambda}\cap
\overline{\Lambda}\not=\emptyset\right\}$, the same arguement shows that if 
$\overline{\gamma.\Lambda}\cap \overline{\cup_{\gamma'\in
H_1}\gamma'.\Lambda}\not=\emptyset$, then
$\gamma.\bar{p}_{\infty}=\bar{p}_{\infty}$. For each $j\ge 1$, setting
$H_{j+1}=\left\{\gamma\in H:\ \exists \gamma'\in \cup_{i=1}^jH_i,\
\overline{\gamma.\Lambda}\cap \overline{\gamma'.\Lambda}
\not=\emptyset\right\}$, then we could inductively show that
$\gamma.\bar{p}_{\infty}=\bar{p}_{\infty}$ if $\gamma\in H_{j+1}$. Since
$Y=\cup_{\gamma\in H}\gamma.\Lambda$ and $Y$ is connected, we have
$\cup_{j=1}^{l'}H_j=H$ for some $l'\le l$, and thus we have shown that
$\gamma.\bar{p}_{\infty}=\bar{p}_{\infty}$ for any $\gamma\in H$. 
\end{proof}

From this claim, we see that $H$ acts on each geodesic ball centered at
$\bar{p}_{\infty}$, and $Y_{\infty}= \overline{B_{d_{\infty}}
(\bar{p}_{\infty},50)}$. Moreover, $H$ sends a minimal geodesic emanating from
$\bar{p}_{\infty}$ to another minimal geodesic emanating from
$\bar{p}_{\infty}$, and since $Z$ is a metric cone over $\mathbb{S}^{k-1}\slash
G$, $Y_{\infty}$ is also a geodesic ball in a metric cone centered at the vertex
$\bar{p}_{\infty}$. 

In the same vein, if $\bar{x}\in \mathcal{S}$, then its isotropy
group $H_{\bar{x}}:=\left\{\gamma\in H:\ \gamma.\bar{x}=\bar{x}\right\}$ acts on
a small ball around $\bar{x}$: since $|H|\le l$ and
$d_{\infty}(\gamma.\bar{x},\bar{x})>0$ for any $\gamma\not\in H_{\bar{x}}$, we
can find $\bar{r}_{\bar{x}}\in (0,r_{\bar{\pi}_{\infty}(\bar{x})})$ such that
$\gamma.B_{d_{\infty}}(\bar{x},\bar{r}_{\bar{x}})\cap
B_{d_{\infty}}(\bar{p}_{\infty},\bar{r}_{\bar{x}})=\emptyset$ whenever
$\gamma\in H\backslash H_{\bar{x}}$, but as $H_{\bar{x}}\le
Isom(Y_{\infty},d_{\infty})$, we know that $H_{\bar{x}}$ acts on
$B_{d_{\infty}}(\bar{x},\bar{r}_{\bar{x}})=:B_{\bar{x}}$ by isometries.
Moreover, the collection $\left\{\gamma.B_{\bar{x}}:\ \gamma\in H\right\}$ is in
bijective correspondence with the left coset of $H_{\bar{x}}$. Since $\gamma
H_{\bar{x}}\gamma^{-1}$ acts isometrically on $\gamma.B_{\bar{x}}$, and
$B_{\bar{x}}\slash H_{\bar{x}}\equiv (\gamma.B_{\bar{x}})\slash (\gamma
H_{\bar{x}}\gamma^{-1})$ for any $\gamma\in H$, we have $B_{\bar{x}}\slash
H_{\bar{x}}\equiv \bar{\pi}_{\infty}(B_{\bar{x}})\subset Z$; in fact we also
have $B_{\bar{x}}\slash H_{\bar{x}}\equiv B_{d_Z}(\bar{\pi}_{\infty} (\bar{x}),
\bar{r}_{\bar{x}})$ as $B_{\bar{x}}$ is a geodesic ball and
$\bar{\pi}_{\infty}$ is taking quotient by isometries. We further notice that
$B_{d_Z} (\bar{\pi}_{\infty}(\bar{x}),\bar{r}_{\bar{x}})$, as a geodesic ball
in a metric cone $Z$, is itself the geodesic $\bar{r}_{\bar{x}}$-ball in a
metric cone with vertex $\bar{\pi}_{\infty}(\bar{x})$, and consequently we know
that $B_{\bar{x}}$ is the geodesic $\bar{r}_{\bar{x}}$-ball centered at
$\bar{x}$ in a metric cone with vertex $\bar{x}\in Y_{\infty}$. Especially, if
$\bar{x}\in \mathcal{S}$ has a tangent cone isometric to $\mathbb{R}^k$, then
we must have $B_{\bar{x}}\equiv \mathbb{B}^k(\bar{r}_{\bar{x}})$. On the other
hand, it is clear that for any $\bar{x}\in \mathcal{R}$,
$B_{d_{\infty}}(\bar{x},\bar{r}_{\bar{x}})\equiv
\mathbb{B}^k(\bar{r}_{\bar{x}})$ with $\bar{r}_{\bar{x}}=
d_{\infty}(\bar{x},\mathcal{S})$. We will also let $B_{\bar{x}}$
denote $B_{d_{\infty}}\left(\bar{x}, d_{\infty}(\bar{x},\mathcal{S})\right)
\equiv \mathbb{B}^k(\bar{r}_{\bar{x}})$ when $\bar{x}\in \mathcal{R}$. 

On the other hand, notice that every point in the regular part $\mathcal{R}$ has
(any of) its tangent cone isometric to $\mathbb{R}^k$. According to the work of
Colding and Naber \cite{ColdingNaber}, the regular part of $Y_{\infty}$ is very
well connected. Letting $\mathcal{H}^k$ denote the $k$-dimensional Hausdorff
measure, we have the following 
\begin{claim}\label{clm: regular_connected}
For $\mathcal{H}^k\times \mathcal{H}^k$-a.e. pair of points $(x,y)\in
\mathcal{R}_{20}\times \mathcal{R}_{20}$, there is a minimal geodesic in
$B_{d_{\infty}}(\bar{p}_{\infty}, 40)$ with all of whose interior points having
the unique tangent cone $\mathbb{R}^k$. Here we employ the notation 
$\mathcal{R}_{r}:=\mathcal{R}\cap B_{d_{\infty}}(\bar{p}_{\infty},r)$ for any
$r\in (0,50)$.
\end{claim}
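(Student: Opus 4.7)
The plan is to deduce the claim from Colding and Naber's \cite{ColdingNaber} theorem on the well-connectedness of the regular set of a Ricci limit space. First I would verify that $(Y_\infty, d_\infty, \bar{p}_\infty)$ fits squarely into the Cheeger-Colding framework: each $Y_i$ is a smooth $m$-dimensional Riemannian manifold (the free quotient of the universal cover of $B_{g_i'}(p_i, 50)$ by the normal subgroup $\ker\phi_i$) carrying $\Rc_{\bar{g}_i}\ge -(m-1)\bar{g}_i$, and we are taking a pointed $H$-equivariant Gromov-Hausdorff limit. Next I would identify the essential dimension of $Y_\infty$ as exactly $k$: the finite isometric quotient $\bar{\pi}_\infty: Y_\infty \to Z$ is a local bijection away from the fixed-point set, and the regular part of $Z$ consists of points with tangent cone $\mathbb{R}^k$, so points of $Y_\infty$ with tangent cone $\mathbb{R}^k$ form a subset of full $\mathcal{H}^k$-measure.

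With the essential dimension fixed at $k$, I would invoke Colding-Naber directly: writing $\mathcal{R}_{CC}$ for the set of points of $Y_\infty$ with unique tangent cone $\mathbb{R}^k$, for $\mathcal{H}^k \times \mathcal{H}^k$-almost every $(x, y) \in \mathcal{R}_{CC} \times \mathcal{R}_{CC}$ there is a minimal geodesic from $x$ to $y$ whose interior lies entirely in $\mathcal{R}_{CC}$. Since the complement $Y_\infty \setminus \mathcal{R}_{CC}$ is $\mathcal{H}^k$-null, the paper's $\mathcal{R}_{20}$ agrees with $\mathcal{R}_{CC} \cap B_{d_\infty}(\bar{p}_\infty, 20)$ modulo a null set, so the Colding-Naber conclusion transfers verbatim to a.e.\ pair in $\mathcal{R}_{20} \times \mathcal{R}_{20}$. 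A direct triangle-inequality argument then confines any minimal geodesic between two points of $B_{d_\infty}(\bar{p}_\infty, 20)$ to lie inside $B_{d_\infty}(\bar{p}_\infty, 40)$, yielding the stated containment.

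The hard part I expect is checking that Colding-Naber's machinery genuinely applies in this collapsed regime, where the approximating manifolds $Y_i$ have dimension $m$ but the limit carries essential dimension $k < m$. Their argument ultimately rests on heat-kernel estimates, the segment inequality, and parabolic approximations of distance functions along the regular stratum; these ingredients are formulated intrinsically on the limit once the essential dimension is pinned down, so the transplant should go through once one verifies constancy of the essential dimension. Should orbifold-type singularities of $Y_\infty$ inherited from $Z$ create complications at the boundary between $\mathcal{R}$ and the CC-singular set, an alternative route is to pull the approximating geodesics back through the finite cover $\bar{\pi}_\infty$ and do the geodesic-avoidance analysis inside $Z$, where the explicit metric cone structure described earlier in the proof makes the regular stratum and its connectivity transparent.
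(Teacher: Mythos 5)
Your approach is broadly the same as the paper's --- cite Colding and Naber's regular-connectedness theorem and check that it applies in this setting --- but you have missed the single most delicate point, and this is a genuine gap.

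You assert that Colding--Naber gives, for $\mathcal{H}^k\times \mathcal{H}^k$-a.e.\ pair of regular points, a minimal geodesic with regular interior. That is not what \cite[Theorem 1.20]{ColdingNaber} states. Their theorem is formulated with respect to the \emph{renormalized limit measure} $\nu$, defined by $\nu(B_{d_\infty}(x,r)) = \lim_i |B_{\bar{g}_i}(\bar{p}_i,1)|^{-1}_{\bar{g}_i}|B_{\bar{g}_i}(x_i,r)|_{\bar{g}_i}$, \emph{not} the $k$-dimensional Hausdorff measure. In the collapsed regime you yourself flag as the ``hard part'' --- the approximating $(Y_i,\bar{g}_i)$ are $m$-dimensional with $m>k$, yet the limit has essential dimension $k$ --- these two measures are genuinely different objects and there is no a priori reason a $\nu$-null set should be $\mathcal{H}^k$-null. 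The paper confronts this head-on: Colding--Naber gives the conclusion for $\nu\times\nu$-a.e.\ pair, and then one argues that $\nu$ is positive on every geodesic ball centered in $\mathcal{R}_{20}$ (by volume comparison), which together with the locally Euclidean structure of $\mathcal{R}_{20}$ is what permits the transfer to $\mathcal{H}^k\times\mathcal{H}^k$. Your proposal elides this step entirely by misattributing the $\mathcal{H}^k$ statement to Colding--Naber.

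A second, lesser omission: the approximating spaces $Y_i = \overline{\pi_i^{-1}(B_{g_i'}(p_i,50))}/\ker\phi_i$ are complete manifolds \emph{with boundary}, whereas Colding--Naber's parabolic machinery is formulated for complete manifolds without boundary. The paper explicitly observes this and notes that the Li--Yau gradient estimate and Harnack inequality remain uniform as long as the geodesics in question stay a definite distance from $\partial Y_i$, which is precisely why the conclusion is localized to $B_{d_\infty}(\bar{p}_\infty,40)\subsetneq Y_\infty$. Your concluding paragraph gestures at possible complications but identifies the ``boundary'' as that between $\mathcal{R}$ and the singular set, which is a different issue; the manifold-boundary obstruction is the one that actually needs to be addressed. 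Your triangle-inequality step and the identification of the essential dimension are fine.
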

\begin{proof}[Proof of the claim]
Notice that the underlying manifolds $(Y_i,\bar{g}_i)$ are complete with
boundary, and by \cite[Theorem 1.1]{ColdingNaber} we know that as long as an
limiting minimal geodesic is contained in $B_{d_{\infty}}(\bar{p}_{\infty},
40)$, connecting two regular points whose tangent cones are $\mathbb{R}^k$,
and can be extended slightly towards both ends, then the tangent cones centered
at its interior are all isomtric to $\mathbb{R}^k$. Here we point out that
Colding and Naber's estimates are uniform (independent of $\{Y_i\}$) as long as
the limit geodesic in consideration is contained within $B_{d_{\infty}}
(\bar{p}_{\infty},40)$, since the Li-Yau gradient estimate
\cite[Theorem 1.2]{LiYau} and Harnack inequality \cite[Theorem 2.1]{LiYau} for
manifolds with boundary are uniform as long as we stay a definite distance away
from the boundary points. In fact, by \cite[Theorem 1.20]{ColdingNaber}, for
$\nu\times \nu$-a.e. pair of points $(x,y)\in \mathcal{R}_{20}\times 
\mathcal{R}_{20}$, since the minimal geodesic connecting them is entirely
contained in $B_{d_{\infty}}(\bar{p}_{\infty},40)$, every interior point of
this geodesic has its tangent cone isometric to $\mathbb{R}^k$. Here $\nu$ is
the renormalized measure, defined for any geodesic ball
$B_{d_{\infty}}(x,r)\subset Y_{\infty}$ by 
$\nu(B_{d_{\infty}}(x,r))=\lim_{i\to \infty}
\left|B_{\bar{g}_i}(\bar{p}_i,1)\right|_{\bar{g}_i}^{-1}
\left|B_{\bar{g}_i}(x_i,r)\right|_{\bar{g}_i}$,
where $B_{\bar{g}_i}(x_i,r)\xrightarrow{pGH}B_{d_{\infty}}(x,r)$ as $i\to
\infty$. By the volume comparison it is clear that $\nu(B_{d_{\infty}}(x,r))>0$
for any $x\in \mathcal{R}_{20}$ and $r\in (0,1)$, implying that for
$\mathcal{H}^k\times \mathcal{H}^k$-a.e. pair of points in
$\mathcal{R}_{20}\times \mathcal{R}_{20}$, the minimal geodesic connecting
these two points has tangent cone $\mathbb{R}^k$ at all of its interior points.
\end{proof}

Notice that if there is a minimal geodesic connecting two points in
$\mathcal{R}$ and passing through $\bar{p}_{\infty}$, then the tangent cone at
$\bar{p}_{\infty}$ is isometric to $\mathbb{R}^k$, but since
$B_{d_{\infty}}(\bar{p}_{\infty}, 20)$ is a geodesic ball in a metric cone
centered at the vertex $\bar{p}_{\infty}$, we must have
$B_{d_{\infty}}(\bar{p}_{\infty},20)\equiv \mathbb{B}^k(20)$, and the lemma is
proven. 

In general, there may be no minimal geodesic connecting regular
points and passing through $\bar{p}_{\infty}$, but the same reasoning shows 
that when a unit-speed minimal geodesic $\sigma:[0,r]\to 
B_{d_{\infty}}(\bar{p}_{\infty}, 40)$ connects two points in
$\mathcal{R}_{20}$, then it has a small tubular neighborhood locally
diffeomorphic to $(-\varepsilon,\varepsilon)\times
\mathbb{B}^{k-1}(\varepsilon)$ for some $\varepsilon>0$: since every interior
point of $\sigma$ is the center of a small geodesic ball isometric to one
centered at the cone vertex in a metric cone, and the regular-convexity
ensures that the tangent cone at each interior point is isometric to
$\mathbb{R}^k$, we see that each interior point $\sigma(t)$ has a small radius
$\bar{r}_{\sigma(t)}>0$ such that $B_{d_{\infty}} (\sigma(t),
\bar{r}_{\sigma(t)})\equiv \mathbb{B}^k(\bar{r}_{\sigma(t)})$; by the
compactness of $\sigma([0,r])$ we can find a minimal radius
$\bar{r}_{\sigma}>0$ such that the $\bar{r}_{\sigma}$-tubular neighborhood of
$\sigma([0,r])$ is locally diffeomorphic to the product mentioned above. In
fact, since $\sigma$ is a distance minimizer when restricted in each
$B_{d_{\infty}} (\sigma(t), \bar{r}_{\sigma(t)})$, it satisfies the Euclidean
geodesic equation, and thus becomes a straight line segment, i.e. we can
identify the $\bar{r}_{\sigma}$-tubular neighborhood of $\sigma([0,r])$
isometrically as
\begin{align}\label{eqn: good_nbhd}
B_{d_{\infty}}(\sigma([0,r]),\bar{r}_{\sigma})\ \equiv\ \left\{\vec{v}\in
\mathbb{R}^k:\ \exists t\in [0,r],\ \left|\vec{v}-t\vec{e}\right|
<\bar{r}_{\sigma}\right\},
\end{align} 
where $\vec{e}\in \mathbb{R}^k$ is a unit vector.

 Intuitively speaking, such tubular neighborhoods provide sufficiently regular
 ``tunnels'' that connect regular points in different fundamental domains of
 $B_{d_{\infty}}(\bar{p}_{\infty},20)$, and such tunnels exist in abundance as
 $\mathcal{R}$ is of full Hausdorff measure in each fundamental domain. The fact
 that $\mathcal{R}_{20}$ is very well connected enables us to prove the
 following segment inequality.
 \begin{claim}\label{clm: segment}
For any $u\in L^1(B_{d_{\infty}}(\bar{p}_{\infty},40))$ and any
$\bar{x}_0, \bar{y}_0\in \mathcal{R}$ with
$r=d_{\infty}(\bar{x}_0,\bar{y}_0)$, we have
\begin{align}\label{eqn: segment}
\int_{B_{d_{\infty}}(\bar{x}_0,r)} \int_{B_{d_{\infty}}(\bar{y}_0,r)}
\mathcal{F}_u(\bar{x},\bar{y})\
\text{d}\mathcal{H}^k(\bar{y})\text{d}\mathcal{H}^k(\bar{x})\ \le\ 2^{k+3}r\
\mathcal{H}^k\left(B_{d_{\infty}}(\bar{y}_0,4r)\right)
\int_{B_{d_{\infty}}(\bar{y}_0,4r)} u\ \text{d}\mathcal{H}^k,
\end{align}
where for any $\bar{x},\bar{y}\in \mathcal{R}$ we define 
$\mathcal{F}_u(\bar{x},\bar{y}):=\inf_{\sigma_{\bar{x}\bar{y}}}
\int_{\sigma_{\bar{x}\bar{y}}}u$, and the infimum is taken over all minimal
geodesics $\sigma_{\bar{x}\bar{y}}$ connecting $\bar{x}$ and $\bar{y}$ and
entirely contained in $\mathcal{R}_{40}$.
\end{claim}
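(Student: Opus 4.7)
The plan is to reduce the segment inequality to its Euclidean analogue, with the sharp constant $2^{k+3}$, by exploiting the tubular Euclidean structure (\ref{eqn: good_nbhd}) around regular minimal geodesics together with the genericity of such geodesics provided by Claim~\ref{clm: regular_connected}.

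First I would approximate $u \in L^1$ by non-negative continuous functions supported in $\mathcal{R}_{40}$, reducing the inequality to that case via monotone or dominated convergence. Next, by Claim~\ref{clm: regular_connected}, the set of pairs $(\bar{x},\bar{y}) \in B_{d_\infty}(\bar{x}_0,r) \times B_{d_\infty}(\bar{y}_0,r)$ not joined by a minimal geodesic lying entirely in $\mathcal{R}_{40}$ with unique Euclidean tangent cones at all interior points is a $\mathcal{H}^k\times\mathcal{H}^k$-null set, hence contributes nothing to the left-hand side. For each remaining ``good'' pair $(\bar{x},\bar{y})$, (\ref{eqn: good_nbhd}) furnishes a tubular neighborhood of every connecting geodesic $\sigma_{\bar{x}\bar{y}}$ isometric to an open Euclidean tube in $\mathbb{R}^k$.

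Then I would cover the relevant family of regular geodesics by a countable collection of such Euclidean tubes and apply within each tube the classical Euclidean segment inequality: for $f \ge 0$ on $\mathbb{R}^k$ and balls $A = B(x_0,r)$, $B = B(y_0,r)$ with $|x_0 - y_0| = r$,
\[
\int_A \!\int_B \int_0^1 f\bigl(tx + (1-t)y\bigr)\,|y-x|\,dt\,dy\,dx \;\le\; 2^{k+3}\,r\,|B| \int_{B(y_0, 4r)} f\,dy,
\]
proved by a standard Fubini and change-of-variables computation in polar coordinates $y \mapsto (s,\omega)$ around each base point. Summing over the countable cover and passing back through the approximation of $u$ yields the desired inequality, with the dimensional constant $2^{k+3}$ coming directly from the Euclidean local model (and the effective dimension $k$ of $\mathcal{R}$).

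The main obstacle will be the measure-theoretic bookkeeping needed to globalize the local Euclidean reduction. One must verify: that the Euclidean charts on overlapping tubes patch consistently, so that arclength integrals of $u$ over pieces of $\sigma_{\bar{x}\bar{y}}$ in different tubes sum to the true $\int_{\sigma_{\bar{x}\bar{y}}} u$; that $\mathcal{H}^k$ on $\mathcal{R}_{40}$ agrees with the Lebesgue measure pulled back from each chart, which follows from uniqueness of the Euclidean tangent cone at regular points together with Colding-Naber regularity; and that pairs whose connecting geodesic grazes the singular set $\mathcal{S}$ contribute only through the measure-zero exceptional set absorbed in Claim~\ref{clm: regular_connected}. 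The very good connectedness of $\mathcal{R}_{40}$ from \cite{ColdingNaber}, combined with the local metric-cone structure around each regular point discussed above, is precisely what makes this patching tractable.
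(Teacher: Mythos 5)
Your overall intuition is right: the segment inequality should follow from a Euclidean polar-coordinate computation, with the flatness of $\mathcal{R}$ (via the tubular Euclidean neighborhoods in (\ref{eqn: good_nbhd})) and the genericity of regular geodesics (Claim~\ref{clm: regular_connected}) justifying the change of variables. But the specific globalization strategy you propose --- cover the family of good geodesics by a countable collection of Euclidean tubes, apply the classical Euclidean segment inequality \emph{inside each tube}, and then sum over the cover --- does not work as stated. The Euclidean segment inequality compares a double integral over two balls $A=B(\bar x_0,r)$ and $B=B(\bar y_0,r)$ to a single integral over a larger ball, and a tube around a single geodesic (which in general can be forced to be arbitrarily thin) is too narrow to contain $A$ and $B$; moreover the segment inequality is not additive over a cover of tubes, so ``summing over the cover'' does not have a coherent meaning. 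The measure-theoretic issue you flag is real, but it is not resolved by patching Euclidean charts --- the structure of the inequality doesn't localize to tubes.

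What the paper actually does is use the tubular structure differently, and there are two ingredients your proposal omits. First, it decomposes $\mathcal{F}_u=\mathcal{F}_u^++\mathcal{F}_u^-$ into contributions from the outer and inner halves of each geodesic, reduces to $\mathcal{F}_u^+$ by symmetry, and exploits the inequality $t^{k-1}\le 2^{k-1}s^{k-1}$ for $s\in[t/2,t]$ to control the Jacobian factor; without this split the constant does not come out. Second, and more importantly, it does \emph{not} cover by tubes and sum: it fixes $\bar x$, notes that $B_{\bar x}\equiv\mathbb{B}^k(\bar r_{\bar x})$ so unit-speed minimal geodesics from $\bar x$ are parametrized by $\mathbb{S}^{k-1}$, and uses (\ref{eqn: good_nbhd}) only to conclude that along any geodesic $\sigma_{\bar x,\vec v}$ that stays in $\mathcal{R}_{40}$ the space is locally isometric to $\mathbb{R}^k$ at each point, so the polar Jacobian of the exponential map from $\bar x$ is \emph{exactly} $t^{k-1}$. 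This yields the single global decomposition $\mathrm{d}\mathcal{H}^k(\sigma_{\bar x,\vec v}(t))=t^{k-1}\mathrm{d}\theta(\vec v)\mathrm{d}t$ along the family of good geodesics, valid $\mathcal{H}^k$-a.e.\ on the target, after which one Fubini computation finishes the estimate. In short, the tubes are the local geometric input that certifies the Jacobian is Euclidean; they are not a cover over which you sum. If you replace your covering-and-summing step with this single polar change of variables around $\bar x$ (together with the $\mathcal{F}^\pm$ split), you recover the paper's argument.
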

 
 We need to reprove such an inequality, rather than directly applying the
 original one due to Cheeger and Colding \cite{ChCoIII}, because (\ref{eqn:
 segment}) is considered with respect to the $k$-dimensional Hausdorff measure,
 rather than the renormalized measure $\nu$. 
 \begin{proof}[Proof of the claim]
 We put
 $\mathcal{F}_u(\bar{x},\bar{y})=\mathcal{F}_u^+(\bar{x},\bar{y})
 +\mathcal{F}_u^-(\bar{x},\bar{y})$, where
 \begin{align*}
 \mathcal{F}_u^+(\bar{x},\bar{y})\ :=\ \inf_{\sigma_{\bar{x}\bar{y}}}
 \int_{\frac{d_{\infty}(\bar{x},\bar{y})}{2}}^{d_{\infty}(\bar{x},\bar{y})}
 u(\sigma_{\bar{x}\bar{y}}(t))\ \text{d}t,\quad \text{and}\quad 
 \mathcal{F}_u^-(\bar{x},\bar{y})\ :=\
 \inf_{\sigma_{\bar{x}\bar{y}}}
 \int_0^{\frac{d_{\infty}(\bar{x},\bar{y})}{2}}u(\sigma_{\bar{x}\bar{y}}(t))\
 \text{d}t.
 \end{align*}
 Since $\mathcal{F}_u^+(\bar{x},\bar{y})=\mathcal{F}_u^-(\bar{y},\bar{x})$, by
 Fubini's theorem we have 
 \begin{align*}
 \int_{B_{d_{\infty}}(\bar{x}_0,4r)} \int_{B_{d_{\infty}}(\bar{y}_0,4r)}
 \mathcal{F}_u^+(\bar{x},\bar{y})\ \text{d}\mathcal{H}^k(\bar{y})
 \text{d}\mathcal{H}^k(\bar{x})\ =\ \int_{B_{d_{\infty}}(\bar{x}_0,4r)} 
 \int_{B_{d_{\infty}}(\bar{y}_0,4r)} \mathcal{F}_u^-(\bar{x},\bar{y})\
 \text{d}\mathcal{H}^k(\bar{y})\text{d}\mathcal{H}^k(\bar{x}),
 \end{align*}
 and we only need to establish the estimate for $\mathcal{F}_u^+$. 

 We now fix any $\bar{x}\in B_{d_{\infty}}(\bar{x}_0,r)\cap \mathcal{R}$. By the
 triangle inequality, the unit-speed minimal geodesic $\sigma_{\bar{x}\bar{y}}$
 connecting $\bar{x}$ to any $\bar{y}\in  B_{d_{\infty}}(\bar{y}_0,r)\cap
 \mathcal{R}$ is entirely contained in $B_{d_{\infty}}(\bar{y},4r)$. Moerover,
 as we have discussed before in (\ref{eqn: good_nbhd}),
 $\sigma_{\bar{x}\bar{y}}$ is a straight line segment which has a Euclidean
 tubular neighborhood. Even though it is not possible to obtain a uniform size
 of the neighborhood, the key observation is that however small the
 neighborhood is, infinitesimally we can decompose the volume form
 $\text{d}\mathcal{H}^k$ at $\sigma_{\bar{x}\bar{y}}(s)$ into polar coordinate
 as $s^{k-1}\text{d}\theta(\vec{v}) \text{d}s$, where $\text{d}\theta$ is
 the volume form on $\mathbb{S}^{k-1}$, the standard $k-1$ sphere, and
 $\vec{v}\in \mathbb{S}^{k-1}$ is essentially
 $\dot{\sigma}_{\bar{x}\bar{y}}(0)$ --- since $B_{\bar{x}}\equiv
 \mathbb{B}^k(\bar{r}_{\bar{x}})$, all unit-speed minimal geodesics emanating
 from $\bar{x}$ is parametrized by $\mathbb{S}^{k-1}$. Adapting to the polar
 structure at $\bar{x}$, we will let $\sigma_{\bar{x},\vec{v}}$ denote the
 minimal geodesic emanating from $\bar{x}$ with initial direction $\vec{v}\in
 \mathbb{S}^{k-1}$. Consequently, we have
 \begin{align*}
 \mathcal{F}_u^+\left(\sigma_{\bar{x},\vec{v}}\left(\frac{t}{2}\right),
 \sigma_{\bar{x},\vec{v}}(t)\right)\
 \text{d}\mathcal{H}^k(\sigma_{\bar{x},\vec{v}}(t))\ \le\
 &\left(\int_{\frac{t}{2}}^tu(\sigma_{\bar{x},\vec{v}}(s))\ \text{d}s\right)\
 t^{k-1}\text{d}\theta(\vec{v})\text{d}t\\
 \le\ &2^{k-1}\left(\int_{\frac{t}{2}}^t u(\sigma_{\bar{x},\vec{v}}(s))s^{k-1}
 \text{d}\theta(\vec{v})\text{d}s\right)\ \text{d}t.
  \end{align*} 
Moreover, we will let  $\bar{t}_r(\bar{x},\vec{v}):=\sup\left\{t>0:\
\sigma_{\bar{x},\vec{v}}(t)\in \mathcal{R}\cap B_{d_{\infty}}
(\bar{y}_0,4r)\right\}$ --- notice that $\sigma_{\bar{x},\vec{v}}$ is always
defined at least up to $t=\bar{r}_{\bar{x}}$ and $\bar{x}\in
B_{d_{\infty}}(\bar{y}_0,4r)$, so $\bar{t}_r(\bar{x},\vec{v})>0$. On the other
hand, by the triangle inequality, it is clear that $\bar{t}_r (\bar{x},\vec{v})
\le 6r$ for any $\vec{v}\in \mathbb{S}^{k-1}$. Now since $\vec{v}\in
\mathbb{S}^{k-1}$ exhausts all possible directions of minimal geodesics
emanating from $\bar{x}$ and reaching to
$\bar{y}\in B_{d_{\infty}}(\bar{y}_0,r)\cap \mathcal{R}$, we can integrate the
above inequality with respect to $\bar{y}\in B_{d_{\infty}} (\bar{y}_0,r)\cap
\mathcal{R}$ and apply Fubini's theorem to see that
\begin{align*}
\int_{B_{d_{\infty}}(\bar{y}_0,r)}\mathcal{F}_u^+(\bar{x},\bar{y})\
\text{d}\mathcal{H}^k(\bar{y})\ \le\
&\int_{\mathbb{S}^{k-1}}\int_0^{\bar{t}_r(\bar{x},\vec{v})}
\mathcal{F}_u^+\left(\sigma_{\bar{x},\vec{v}}\left(\frac{t}{2}\right),
\sigma_{\bar{x},\vec{v}}(t)\right)\ \text{d}\mathcal{H}^k
(\sigma_{\bar{x},\vec{v}}(t))\\
\le\ &2^{k-1}\int_0^{6r}\int_{\mathbb{S}^{k-1}}
\left(\int_{0}^{\bar{t}_r(\bar{x},\vec{v})}
u(\sigma_{\bar{x}, \vec{v}}(s))s^{k-1} \text{d}\theta(\vec{v})\text{d}s\right)\
\text{d}t\\
\le\ &2^{k+2}r\int_{B_{d_{\infty}}(\bar{y}_0,4r)} u\ \text{d}\mathcal{H}^k.
\end{align*}
Here on the left-hand side we are essentially integrating regular $\bar{y}$, but
as $\mathcal{R}$ is of full Hausdorff measure, it is the same as integrating
$\bar{y}$ all over $B_{d_{\infty}}(\bar{y}_0,r)$. Integrating the last inequlity
with respect to $\bar{x}\in B_{d_{\infty}}(\bar{x}_0,r)\cap \mathcal{R}$, we
have
\begin{align*}
\int_{B_{d_{\infty}}(\bar{x}_0,r)}
\int_{B_{d_{\infty}}(\bar{y}_0,r)}\mathcal{F}_u^+(\bar{x},\bar{y})\
\text{d}\mathcal{H}^k(\bar{y})\text{d}\mathcal{H}^k(\bar{x})\ \le\
2^{k+2}r\ \mathcal{H}^k\left(B_{d_{\infty}}(\bar{y}_0,4r)\right)
\int_{B_{d_{\infty}}(\bar{y}_0,4r)} u\ \text{d}\mathcal{H}^k,
\end{align*}
as $B_{d_{\infty}}(\bar{x}_0, r)\subset B_{d_{\infty}}(\bar{y}_0, 4r)$. Adding
its symmetric part we obtain 
\begin{align*}
\int_{B_{d_{\infty}}(\bar{x}_0,r)} \int_{B_{d_{\infty}}(\bar{y}_0,r)}
\mathcal{F}_u(\bar{x},\bar{y})\
\text{d}\mathcal{H}^k(\bar{y})\text{d}\mathcal{H}^k(\bar{x})\ \le\ 2^{k+3}r\
\mathcal{H}^k\left(B_{d_{\infty}}(\bar{y}_0,4r)\right)
\int_{B_{d_{\infty}}(\bar{y}_0,4r)} u\ \text{d}\mathcal{H}^k,
\end{align*}
which is the desired inequality.
 \end{proof}
 
In order to show that $B_{d_{\infty}}(\bar{p}_{\infty},10)\equiv
\mathbb{B}^k(10)$, we will rely on the recent developments in the theory of
$RCD$ spaces, notably \cite{DPG18}, together with a volume consideration. To
this end, we equip $W_{\infty}:=\overline{B_{d_{\infty}}
(\bar{p}_{\infty},20)}$ with the $k$-dimensional Hausdorff measure
$\mathcal{H}^k$, and prove the following
\begin{claim}\label{clm: RCD}
 The metric measure space $(W_{\infty},d_{\infty}, \mathcal{H}^k)$ is a compact,
 connected and non-collapsing $RCD(0,k)$ space.
\end{claim}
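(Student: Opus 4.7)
The plan is to verify the three asserted properties of $(W_\infty, d_\infty, \mathcal{H}^k)$ in turn. Compactness is immediate since $W_\infty$ is by definition the closure of a bounded geodesic ball in the proper Gromov-Hausdorff limit $Y_\infty$. Connectedness descends from that of the approximators $(Y_i, \bar{g}_i)$ via pointed Gromov-Hausdorff stability of balls. The substantive content is the non-collapsed $RCD(0, k)$ assertion, which I would establish in two stages, separating the derivation of an \emph{a priori} curvature bound from its eventual improvement via cone self-similarity.

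In the first stage I aim to show that $(W_\infty, d_\infty, \mathcal{H}^k)$ is non-collapsed $RCD(-c_0, k)$ for some small $c_0>0$ coming from the rescaled Ricci lower bound on $g_i'$. Each $(Y_i, \bar{g}_i)$ is, by construction, the metric quotient of a Riemannian manifold satisfying $\Rc \geq -c_0$ under the discrete isometric action of $\ker\phi_i$, hence a Riemannian orbifold with the same Ricci lower bound and consequently an $RCD(-c_0, m)$ space with its canonical volume. Passing to the pointed measured Gromov-Hausdorff limit with the renormalized measure $\nu$ introduced in the proof of Claim~\ref{clm: regular_connected} and invoking Ambrosio-Gigli-Savar\'e stability, $(Y_\infty, d_\infty, \nu)$ is $RCD(-c_0, m)$. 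The essential dimension of this limit is $k$, since the regular set $\mathcal{R}$ has full measure and each of its points has tangent cone $\mathbb{R}^k$ (as recorded in the discussion preceding Claim~\ref{clm: regular_connected}). Combining this dimension drop with the $\mathcal{H}^k$-version of the segment inequality in Claim~\ref{clm: segment}, which is a direct $\mathcal{H}^k$-analogue of the Cheeger-Colding integral estimate bypassing the renormalized measure, and appealing to the structure theory of non-collapsed $RCD$ spaces of lower essential dimension, I would conclude that $(W_\infty, d_\infty, \mathcal{H}^k)$ is non-collapsed $RCD(-c_0, k)$.

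In the second stage I would upgrade the curvature bound from $-c_0$ to $0$ using the metric cone structure at $\bar{p}_\infty$ established in the discussion following Claim~\ref{clm: fix_center}. For each $\lambda \in (0, 1]$, the cone dilation $\phi_\lambda(\sigma, t) := (\sigma, \lambda t)$ is a $\lambda$-similarity of $W_\infty$ onto the smaller concentric ball, producing a metric measure isomorphism $(W_\infty, \lambda d_\infty, \lambda^k \mathcal{H}^k) \cong (B_{d_\infty}(\bar{p}_\infty, 20\lambda), d_\infty, \mathcal{H}^k)$. Since the right-hand side inherits $RCD(-c_0, k)$ from $(W_\infty, d_\infty, \mathcal{H}^k)$, the standard rescaling rule---$(X,d,\mu)$ is $RCD(K,N)$ iff $(X,\lambda d,\lambda^N\mu)$ is $RCD(K\lambda^{-2},N)$---forces $(W_\infty, d_\infty, \mathcal{H}^k)$ to be $RCD(-c_0 \lambda^2, k)$ for every $\lambda \in (0, 1]$. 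Letting $\lambda \to 0$ and using closedness of the $RCD(\cdot,k)$ condition in the curvature parameter yields $RCD(0, k)$, as desired.

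The principal obstacle lies in the first stage, namely the transition from the renormalized limit measure $\nu$---for which measured Gromov-Hausdorff stability directly applies---to the $k$-dimensional Hausdorff measure $\mathcal{H}^k$ required for the volume rigidity via \cite{DPG18}. The segment inequality of Claim~\ref{clm: segment}, stated and proved with respect to $\mathcal{H}^k$ rather than $\nu$, is engineered precisely to support this transition: it provides the integral estimates needed to verify the non-collapsed $RCD$ structure of $(W_\infty, d_\infty, \mathcal{H}^k)$ directly, without passing through the renormalized measure $\nu$ that may a priori fail to be compatible with $\mathcal{H}^k$ in the collapsing regime.
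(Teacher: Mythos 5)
Your proposal takes a genuinely different route from the paper. The paper verifies the four conditions of Definition~\ref{defn: RCD0k} directly: it builds the \emph{carr\'e du champ} from the locally Euclidean structure of $\mathcal{R}_{20}$, uses the segment inequality (Claim~\ref{clm: segment}) to establish the Lipschitz representative property, and derives the Bakry-\'Emery inequality pointwise in the regular set from the Euclidean Hessian identity (\ref{eqn: define_Gammaff}), which is why the curvature lower bound comes out as $0$ rather than a small negative constant. You instead propose a two-stage argument (stability plus dimension-drop theory, then cone-dilation bootstrapping). Your second stage is sound: the concentric ball $\overline{B_{d_\infty}(\bar p_\infty, 20\lambda)}$ is a convex subset of the metric cone, the dilation gives the metric-measure isomorphism you describe, and the scaling law plus closedness in the curvature parameter upgrades $RCD(-c_0,k)$ to $RCD(0,k)$ as $\lambda\to 0$.

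The gap is in your first stage, and it is not a small one. The transition from $(W_\infty,d_\infty,\nu)$ being $RCD(-c_0,m)$ to $(W_\infty,d_\infty,\mathcal{H}^k)$ being $ncRCD(-c_0,k)$ is exactly the problem the paper is trying to solve, and ``appealing to the structure theory of non-collapsed $RCD$ spaces of lower essential dimension'' does not close it. The relevant result in that direction is Honda's theorem \cite{Honda20}, but that theorem is formulated for Ricci limit spaces, i.e.\ pmGH limits of complete, boundaryless manifolds, where $\mathcal{H}^k$ is locally finite. Here $(Y_i,\bar g_i)$ is a compact manifold with boundary (not an orbifold: $\ker\phi_i$ acts freely), closed geodesic balls are in general not geodesically convex, and the Ambrosio-Gigli-Savar\'e stability machinery you invoke in stage 1 does not apply off the shelf. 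The paper's own passing remark that $(W_\infty,d_\infty,\nu)$ is $RCD$ ``by the standard theory'' is precisely what it is careful \emph{not} to rely on; instead it adopts a localized definition (test functions vanishing on $\partial W_\infty$) and verifies each condition by hand. Your last paragraph correctly identifies the $\nu$-to-$\mathcal{H}^k$ transition as the principal obstacle and correctly names the segment inequality as the tool the authors engineered for it, but your argument never actually feeds Claim~\ref{clm: segment} into the verification: in the paper it is used specifically to establish equicontinuity of approximating Lipschitz functions and hence Condition (1), while your stage 1 leaves its role unexplained and offloads the real work onto a black-box citation that does not hold in this boundary setting. To make your route rigorous, you would need either a version of the dimension-drop theorem for closed metric balls in Ricci limit spaces, or a justification that $W_\infty$ embeds convexly into a complete boundaryless $RCD(-c_0,m)$ space --- neither is available, which is why the paper chooses direct verification.
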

Although the metric measure space $(W_{\infty},d_{\infty},\nu)$, as the
\emph{metric-measure limit} of the closed geodesic balls
$(\overline{B_{\bar{g}_i}(\bar{p}_i, 20)}, \bar{g}_i,
\left|B_{\bar{g}_i}(\bar{p}_i,1)\right|^{-1}\dvol_{\bar{g}_i})$, is already an
$RCD(m-1,m)$ space by the standard theory, it is not immediate that the
\emph{metric limit} $(W_{\infty},d_{\infty})$, when equipped with the
$k$-dimensional Hausdorff measure $\mathcal{H}^k$, is an $RCD(0,k)$ space.
Before proving the claim, let us briefly recall the definition of $RCD(0,k)$
spaces proposed in \cite{AGS14}; see also \cite{Gigli18, Honda20}. 
For a complete, Hausdorff and separable metric measure space $(X,d_X,\mathfrak{m})$
the Cheeger energy is defined for any $f\in L^2(X)$ as
\begin{align*}
Ch(f)\ :=\ \inf\left\{\liminf_{j\to \infty}\frac{1}{2}\int_X(lip\ f_j)^2\
\text{d}\mathfrak{m}:\ f_j\in Lip_b(X)\cap L^2(X),\
\lim_{j\to\infty}\|f_j-f\|_{L^2(X)}=0\right\},
\end{align*} 
where $Lip_b(X)$ denote the collection of locally bounded Lipschitz functions on
$X$, and for any such $f$, $lip\ f$ is defined at any $x\in X$ as
$lip\ f(x) :=\liminf_{r\searrow 0}\sup_{y\in
B_{d_X}(x,r)}|f(x)-f(y)|d_X(x,y)^{-1}$.
The Banach space $W^{1,2}(X)$ is defined as $\left\{f\in L^2(X):\
Ch(f)<\infty\right\}$. 
\begin{definition}\label{defn: RCD0k}
We say that a complete, Hausdorff and separable metric measure space
$(X,d_X,\mathfrak{m})$ an $RCD(0,k)$ space if 
\begin{enumerate}
  \item there is a \emph{carr\'e du champ} $\Gamma: W^{1,2}(X)\times
  W^{1,2}(X) \to L^1(X)$ such that $\forall f\in W^{1,2}(X)$, if
  $\Gamma(f,f)\le 1$ $\mathfrak{m}$-a.e. on $X$, then it has $1$-Lipschitz
  representative;
  \item Infinitesimally Hilbertian: the Cheeger energy can be written as
  $2Ch(f)=\int_X\Gamma(f,f)\ \text{d}\mathfrak{m}$ for any $f\in W^{1,2}(X)$;
  \item there exists $x\in X$ such that $\mathfrak{m}(B_{d_X}(x,r))\le Ce^{Cr}$
  for some fixed $C>1$ and any $r>0$;
  \item Bakry-\'Emery inequality: $\forall f\in Dom(\Delta)$ with $\Delta f\in
  W^{1,2}(X)$, we have
  \begin{align*}
  \int_X\Gamma(f,f)\Delta\varphi\ \text{d}\mathfrak{m}\ \ge\
  2\int_X\left(\frac{1}{k}(\Delta f)^2+\Gamma(f,\Delta f)\right)\varphi\
  \text{d}\mathfrak{m},
  \end{align*}
  whenever $\varphi \in Dom(\Delta)\cap L^{\infty}(X)$ satisfies $\varphi\ge 0$
  and $\Delta \varphi \in L^{\infty}(X)$.
\end{enumerate}
Moreover, if $\mathfrak{m}=\mathcal{H}^k$, the $k$-dimensional Hausdorff
measure, then we say that $(X,d_X,\mathfrak{m})$ is a \emph{non-collapsing}
$RCD(0,K)$ space, denoted by $ncRCD(0,K)$ space; see \cite[Definition
1.1]{DPG18}.
\end{definition} 
Here we notice that $\Delta$ denotes the Laplace operator on $X$: for some
$f\in Dom(\Delta)\subset W^{1,2}(X)$, we say that $u\in L^2(X)$ satisfies
$\Delta f=u$ if
\begin{align*}
\forall \varphi\in W^{1,2}(X),\quad \int_{X}\Gamma(f,\varphi)\
\text{d}\mathfrak{m}\ =\ -\int_Xu\varphi\ \text{d}\mathfrak{m}.
\end{align*}
Notice that here $f$ is a critical point of the functional
$\mathcal{E}_u(f):=\int_X\frac{1}{2}\Gamma(f,f)+fu\ \text{d}\mathcal{H}^k$. When
$X$ has a boundary in a given context, we require the energy functional
$\mathcal{E}_u$ to only consider $W^{1,2}(W_{\infty})$ functions subject to a
given boundary condition $f|_{\partial X}$. In this situation, we require the
test function $\varphi$ to vanishi on $\partial X$, both in the definition of
$\Delta$ and in Condition (4) of Definition~\ref{defn: RCD0k}.

\begin{proof}[Proof of Claim~\ref{clm: RCD}]
If $f\in Lip_b(W_{\infty})$, then clearly $\nabla f$ is defined
$\mathcal{H}^k$-a.e. in $\mathcal{R}_{20}$. Consequently, we can define the
\emph{carr\'e du champ} $\Gamma$ for  $f_1,f_2\in Lip_b(W_{\infty})$ as
$\Gamma(f_1,f_2):=\left\langle \nabla f_1,\nabla f_2\right\rangle$
for $\mathcal{H}^{k}$-a.e. point in $W_{\infty}$. Here
$\left\langle-,-\right\rangle$ denotes the Euclidean inner product. Notice that
here the orbifold nature of $Z$ is crucial --- any regular point of
$W_{\infty}$ is the center of a small enough geodesic ball isometric to a
$k$-dimensional Euclidean ball.

Now for any $f_1,f_2\in W^{1,2}(W_{\infty})$, there is a sequence
$\{f_{aj}\}\subset Lip_b(W_{\infty})$ such that for $a=1,2$,
\begin{align*}
\lim_{j\to\infty}
\left\|f_{aj}-f_a\right\|_{L^2(W_{\infty})}\ =\ 0\quad \text{and}\quad
\lim_{j\to \infty} \left\|\nabla f_{aj}\right\|^2_{L^2(W_{\infty})}\ =\
2Ch(f_a)\ <\ \infty.
\end{align*}
Passing to sub-sequences, we can assume that
$f_{aj}\xrightarrow{\mathcal{H}^k\text{-a.e.}} f_{a}$ for $a=1,2$, and thus we
can define $\Gamma(f_1,f_2):=\lim_{j\to \infty} \left\langle \nabla f_{1j}, \nabla
f_{2j}\right\rangle$ for $\mathcal{H}^{k}$-a.e. point in $W_{\infty}$. (Here we
can truncate $|\nabla f_{aj}|$ by $n\in\mathbb{N}$, define a limit for each $n$
and take a diagonal limit as $n\to \infty$.) Clear, $\Gamma$ maps into
$L^1(W_{\infty})$ by the dominated convergence theorem: we have
$\left|\Gamma(f_{1j},f_{2j})\right|\le |\nabla f_{1j}||\nabla
f_{2j}|\in L^1(W_{\infty})$ by H\"older's inequality, and consequently
\begin{align*}
\int_{W_{\infty}}\left|\Gamma(f_1,f_2)\right|\
\text{d}\mathcal{H}^k\ =\ &\int_{W_{\infty}}
\lim_{j\to\infty}\left|\Gamma(f_{1j},f_{2j})\right|\ \text{d}\mathcal{H}^k\\
\le\ &\liminf_{j\to\infty}\left\|\nabla f_{1j}\right\|_{L^2(W_{\infty})}
\left\|\nabla f_{2j}\right\|_{L^2(W_{\infty})}\\
\le\ &2\sqrt{Ch(f_1)Ch(f_2)}\ <\ \infty.
 \end{align*} 
By the same reasoning, $\Gamma$ makes the Cheeger energy a quadratic form: if
$f_j\xrightarrow{L^2}f\in W^{1,2}(W_{\infty})$ and
$\left\|\nabla f_j\right\|_{L^2(W_{\infty})}^2 \to 2Ch(f)$, then $\left\|\nabla
f_{j}\right\|_{L^2(W_{\infty})}\le 2Ch(f)^{\frac{1}{2}}<\infty$ for all $j$
sufficiently large, and after possibly passing to a sub-sequence, by the
dominated convergence theorem we have
\begin{align*}
2Ch(f)\ =\ &\lim_{j\to\infty}\left\|\nabla
f_{j}\right\|_{L^2(W_{\infty})}^2\\
=\ &\int_{W_{\infty}}\lim_{j\to \infty}\left\langle \nabla f_{j},
\nabla f_{j}\right\rangle\ \text{d}\mathcal{H}^k\\
=\ &\int_{W_{\infty}}\Gamma(f,f)\ \text{d}\mathcal{H}^k.
\end{align*}

While we have defined the \emph{carr\'e du champ} $\Gamma$ and checked the
infinitesimally Hilbertian property (Condition (2)), given $f\in
W^{1,2}(W_{\infty})$ with $\Gamma(f,f)\le 1$, we are yet to find a
$1$-Lipschitz function $\bar{f}$ such that $\bar{f}=f$ for $\mathcal{H}^k$-a.e.
point in $W_{\infty}$. We now assume to have a sequence $\{f_j\}\subset
Lip_b(W_{\infty})$ such that 
$f_j\xrightarrow{\mathcal{H}^k\text{-a.e.}}f$ and $\left\|\nabla
f_j\right\|^2_{L^2(W_{\infty})}\to 2Ch(f)$, then $\frac{1}{2}\left|\nabla
f_j\right|^2\xrightarrow{\mathcal{H}^k\text{-a.e.}}\Gamma(f,f)\le 1$ after
possibly passing to a sub-sequence, still denoted by $\{f_j\}$. In order to find a
Lipschitz representative of $f$, we need to check the uniform equicontinuity of
$\{f_j\}$ --- notice that $\left|\nabla f_j\right|$ only converges to a bounded
function $\mathcal{H}^k$-\emph{almost everywhere}, and we do not have a uniform
bound for $\left\|\nabla f_j\right\|_{L^{\infty}(W_{\infty})}$. Therefore, we
will need to rely on the segment inequality (\ref{eqn: segment}) which controls
the modulus of continuity by $\left\|\nabla f_j\right\|_{\bar{L}^2_{loc}
(W_{\infty})}$ (the local $L^2$-average).

Now for any $\bar{p},\bar{q}\in W_{\infty}$ with
$d_{\infty}(\bar{p},\bar{q})=r\le 1$, for any $\varepsilon>0$ there are
$\bar{p}_{\varepsilon},\bar{q}_{\varepsilon}\in \mathcal{R}_{20}$ such that
$d_{\infty}(\bar{p}, \bar{p}')+d_{\infty}(\bar{q},\bar{q}')<\varepsilon r$.
Fixing a minimal geodesic $\sigma_{\varepsilon}:[0,1]\to M$ connecting
$\bar{p}_{\varepsilon}$ and $\bar{q}_{\varepsilon}$, we have a positive radius
$\bar{r}_{\sigma_{\varepsilon}}$ such that the
$\bar{r}_{\sigma_{\varepsilon}}$-tubular neighborhood 
of $\sigma_{\varepsilon}$ is isometric to an Euclidean domain; see (\ref{eqn:
good_nbhd}). We now fix $n\in \mathbb{N}$ large enough so that
$10n^{-1}r<\bar{r}_{\sigma_{\varepsilon}}$. For
$\bar{x}_s:=\sigma_{\varepsilon}(s)$ with $s=1,\ldots,n$, we define
\begin{align*}
A^{n}_{s-1}\ :=\ \left\{\bar{x}\in
B_{d_{\infty}}(\bar{x}_{s-1},n^{-1}r)\cap \mathcal{R}:\
\fint_{B_{d_{\infty}}(\bar{x}_{s}, n^{-1}r)}
\mathcal{F}_{\left|\nabla f_j\right|}(\bar{x},\bar{y})\
\text{d}\mathcal{H}^k(\bar{y})\ \le\
32^{k+1}r\fint_{B_{d_{\infty}}(\bar{x}_s,4n^{-1}r)} \left|\nabla
f_j\right|\right\},
\end{align*}
and for any $\bar{x}\in A^{n}_{s-1}$, we define 
\begin{align*}
B^{n}_s(\bar{x})\ :=\ \left\{\bar{y}\in
B_{d_{\infty}}(\bar{x}_{s},n^{-1}r)\cap \mathcal{R}:\
\mathcal{F}_{\left|\nabla f_j\right|}(\bar{x},\bar{y})\ \le\
32^{k+2}r\fint_{B_{d_{\infty}}(\bar{x}_s,4n^{-1}r)} \left|\nabla
f_j\right|\right\}.
\end{align*}
We also define $A^{n}_n := B_{d_{\infty}}(\bar{x}_n,n^{-1}r)\cap \mathcal{R}$
and $B^{n}_0(\bar{x}):= B_{d_{\infty}}(\bar{x}_0, n^{-1}r)\cap \mathcal{R}$ for
any $\bar{x}\in A^{n}_0$.

 Now for each $s=1,\ldots,n$, we apply Claim~\ref{clm: segment} to $|\nabla
 f_j|$ on $B_{d_{\infty}}(\bar{x}_{s-1},n^{-1}r)\cup
 B_{d_{\infty}}(\bar{x}_{s-1},n^{-1}r)$ and see that
\begin{align}\label{eqn: segment_s}
\fint_{B_{d_{\infty}}(\bar{x}_{s-1},n^{-1}r)}
\fint_{B_{d_{\infty}}(\bar{x}_{s},n^{-1}r)} \mathcal{F}_{\left|\nabla
f_j\right|}\ \text{d}\mathcal{H}^k \text{d}\mathcal{H}^k\ \le\
2^{5k+3}n^{-1}r\fint_{B_{d_{\infty}}(\bar{x}_s,4n^{-1}r)} \left|\nabla
f_j\right|\ \text{d}\mathcal{H}^k.
\end{align}
The right-hand side constant is obtained as $B_{d_{\infty}}
(\bar{x}_s, 10n^{-1}r)\subset B_{d_{\infty}}
\left(\sigma_{\varepsilon}([0,1]), \bar{r}_{\sigma_{\varepsilon}}\right)$, and
thus
\begin{align*}
\mathcal{H}^k\left( B_{d_{\infty}}(\sigma_{\varepsilon}
(\bar{x}_s),n^{-1}r)\right)\ =\ \omega_k(n^{-1}r)^k\quad \text{and}\quad 
\mathcal{H}^k\left(B_{d_{\infty}}(\sigma_{\varepsilon} (\bar{x}_s),
4n^{-1}r)\right)\ =\ \omega_k(4n^{-1}r)^k
\end{align*}
for each $s=0,1,\ldots,n$. Now by Chebyshev's inequality we see for each
$s=1,\ldots,n$ that
\begin{align*}
\mathcal{H}^k\left(A^{n}_{s-1}\right)\ \ge\
\frac{3}{4}\mathcal{H}^k\left(B_{d_{\infty}} (\bar{x}_{s-1}, n^{-1}r)\right),
\quad \text{and}\quad \mathcal{H}^k\left(B^{n}_{s}(\bar{x})\right)\ \ge\
\frac{31}{32}\mathcal{H}^k\left(B_{d_{\infty}} (\bar{x}_{s}, n^{-1}r)\right)
\end{align*}
whenever $\bar{x}\in A^{n}_{s-1}$. Especially, $A^{n}_s\cap
B^{n}_s(\bar{x})\not=\emptyset$ for any $s=1,\ldots,n$ and any $\bar{x}\in
A^{n}_{s-1}$. On the other hand, by the assumption that $\left|\nabla
f_j\right| \xrightarrow{\mathcal{H}^k\text{-a.e.}} \Gamma(f,f)\le 1$ when $j\to
\infty$ we see for all $j$ large enough and any $s=1,\ldots,n$ that
\begin{align*}
\fint_{B_{d_{\infty}}(\bar{x}(s), 4n^{-1}r)}\left|\nabla f_j\right|^2\
\text{d}\mathcal{H}^k\ \le\ 2.
\end{align*}
Combining this inequality with (\ref{eqn: segment_s}) and starting with any
$\bar{x}_0'\in A^{n}_0$ fixed, we could then inductively find minimal
geodesic segments $\sigma_s$ connecting $\bar{x}_{s}'\in A^{n}_{s}\cap
B^{n}_{s}(\bar{x}_{s-1}')$, such that 
\begin{align*}
\forall s=1,\ldots,n,\quad \left|f_j(\bar{x}_s')-f_j(\bar{x}_{s-1}')\right|\
=\ \int_{\sigma_s}\left\langle \nabla f_j,\dot{\sigma}_s\right\rangle\ 
\le\ 32^{k+4}n^{-1}r.
\end{align*}
Here we notice that each $\sigma_s$ is contained in $\mathcal{R}_{40}$ since
$\sigma_s$ maps into $B_{d_{\infty}}(\bar{x}_s',8n^{-1}r)
\subset B_{d_{\infty}}(\bar{p}_{\infty}, 23)$ for $n\ge 8$. Adding these
inequalities up with respect to $s$, we see that
\begin{align}\label{eqn: fj_estimate}
\left|f_j(\bar{x}_n')-f_j(\bar{x}_0')\right|\ \le\ 32^{k+4}r.
\end{align}
Since $d_{\infty}(\bar{p},\bar{x}_0')+d_{\infty}(\bar{q},\bar{x}_n')
<(2n^{-1}+\varepsilon)r$, by the continuity of $f_j$ and the independence
of the estimate (\ref{eqn: fj_estimate}) on $n$ and $\varepsilon$, we can let
$n\to \infty$, then send $\varepsilon\to 0$, and see that
\begin{align*}
\left|f_j(\bar{p})-f_j(\bar{q})\right|\ \le\ 
32^{k+4}d_{\infty}(\bar{p},\bar{q}).
\end{align*}
 As the uniform constant $32^{k+4}$ is clearly independent of $j$, this proves
 the equicontinuity of the family $\{f_j\}$, and the Arzel\`a-Ascoli theorem
 allows us to extract a continuous limit function $\bar{f}$. Clearly,
 $\bar{f}=f$ holds $\mathcal{H}^k$-a.e.; on the other hand, since
 $\Gamma(f,f)\le 1$, the infinitesimally Hilbertian property ensures that the
 \emph{upper gradient} $|Df|$ of $f$ is $\mathcal{H}^k$-a.e. bounded above by
 $1$, and by the locality of the upper gradient (see \cite[(2.7)]{Gigli18}), we
 see that $\left|D\bar{f}\right|\le 1$ holds $\mathcal{H}^k$ almost everywhere,
 implying that $\bar{f}$ is Lipschitz with $lip\ f\le 1$. We have now finished
 verifying Condition (1).

By the Claim~\ref{clm: fix_center}, we see that Condition (3) is readily 
checked as  
\begin{align*}
\mathcal{H}^k\left(B_{d_{\infty}}(\bar{p}_{\infty},r)\right)\ 
=\ |H|\left|B_{d_Z}(z,r)\right|\ =\ \omega_kr^k\ <\ \omega_ke^r.
\end{align*} 

 We now check the Bakry-\'Emery inequality for $f\in  Dom(\Delta)$,
 i.e. Condition (4). By the infinitesimally Hilbertian property, we know that
 $\Delta$ is a linear operator, and that it is a local operator. Consequently,
 for any $\bar{x}\in \mathcal{R}_{20}$, let $B_{\bar{x}}$ be the $k$-Euclidean
 neighborhood of $\bar{x}$, then $(\Delta f)|_{B_{\bar{x}}}=\Delta
 (f|_{B_{\bar{x}}})$ $\mathcal{H}^k$-a.e. whenever $\Delta f\in
 W^{1,2}(W_{\infty})$. But in this case, we can handle the Laplace operator as
 the Euclidean one: as $(\Delta f)|_{B_{\bar{x}}}\in W^{1,2}(B_{\bar{x}})$, by
 the elliptic regularity in the $k$-Euclidean space we see that
 $\|f\|_{C^2(\frac{1}{2}B_{\bar{x}})}<\infty$ where $\frac{1}{2}B_{\bar{x}}$ is
 the concentric ball as $B_{\bar{x}}$ but with half the radius. Consequently, as
 $\Delta f\in W^{1,2}(W_{\infty})$, we can estimate at $\mathcal{H}^k$-a.e. 
 regular point $\bar{x}$ that 
 \begin{align}\label{eqn: define_Gammaff}
 \begin{split}
\Delta \Gamma(f,f)(\bar{x})\ =\
&2\left|Hess_f\right|^2(\bar{x})+2\Gamma( f, \Delta f) (\bar{x})\\
\ge\ &\frac{2}{k}(\Delta f)^2(\bar{x})+2\Gamma(f,\Delta f)(\bar{x}).
\end{split}
 \end{align}
 Especially, we see that $\Delta \Gamma(f,f)$ is well-defined $\mathcal{H}^k$
 almost everywhere on $\mathcal{R}_{20}$. On the other hand, since
 $\mathcal{R}_{20}$ is an open set, and especially $\bar{r}_{\bar{x}}=
 d_{\infty} (\bar{x},\mathcal{S})$ for any $\bar{x}\in \mathcal{R}_{20}$, we
 could mollify $f$ to find $f_j\in C^3(\mathcal{R}_{20})$ so that
 $f_j|_{\mathcal{S}}=0$ and $f_j\xrightarrow{\mathcal{H}^k\text{-a.e.}} f$ as
 $j\to \infty$. Consequently, this also ensures
\begin{align}\label{eqn: mollification1}
 \frac{1}{k}(\Delta f_j)^2+\Gamma(f_j,\Delta f_j)\ \longrightarrow\
 \frac{1}{k}(\Delta f)^2+\Gamma(f,\Delta f)\quad \mathcal{H}^k\text{-a.e.}\
 \text{as}\ j\to \infty,
\end{align}
since by (\ref{eqn: define_Gammaff}) the limit is $\mathcal{H}^k$-a.e. well
defined on $\mathcal{R}_{20}$; and similarly 
\begin{align}\label{eqn: mollification2}
\Gamma(f_j,f_j)\ \longrightarrow\ \Gamma(f,f)\quad \mathcal{H}^k\text{-a.e.}\
\text{as}\ j\to \infty.
\end{align}
 
 Now for for any non-negative $\varphi\in  Dom(\Delta)\cap
 L^{\infty}(W_{\infty})$ with $\Delta \varphi\in L^{\infty}(W_{\infty})$ such
 that $\varphi=0$ on $\partial W_{\infty}=\left\{\bar{x}\in Y_{\infty}:\
 d_{\infty}(\bar{x},\bar{p}_{\infty})=20\right\}$, we have
\begin{align*}
\int_{W_{\infty}} \Gamma(f,f)\Delta \varphi\ \text{d}\mathcal{H}^k =\
&\lim_{j\to\infty}\int_{W_{\infty}}\Gamma(f_i,f_i)\Delta \varphi\
\text{d}\mathcal{H}^k\\
=\ &\lim_{j\to\infty}\int_{W_{\infty}} \Delta \Gamma(f_i,f_i)\varphi\
\text{d}\mathcal{H}^k\\
\ge\ &\lim_{j\to\infty}2\int_{W_{\infty}}\left(\frac{1}{k}(\Delta
f_i)^2+\Gamma(f_i,\Delta f_i)\right)\varphi\ \text{d}\mathcal{H}^k\\
\ge\ &2\int_{W_{\infty}}\left(\frac{1}{k}(\Delta f)^2 +\Gamma(f,\Delta
f)\right)\varphi\ \text{d}\mathcal{H}^k,
\end{align*}
where the first equality holds because of (\ref{eqn: mollification2}) and the
dominated convergence theorem, while the last inequality holds due to
(\ref{eqn: mollification1}) and Fatou's lemma. Condition (4) is now verified. 

With all conditions in Definition~\ref{defn: RCD0k} checked, and since the
measure $\mathcal{H}^k$ is nothing but the $k$-dimensional Hausdorff measure,
we conclude that $(Y_{\infty},d_{\infty},\mathcal{H}^k)$ is an $ncRCD(0,k)$
space.
\end{proof}

Moreover, since $B_{d_{\infty}}(\bar{p}_{\infty}, 20)\slash
H=W_{\infty}\slash H\equiv B_{d_Z}(z,20)\subset Z$, we see that
\begin{align*}
\mathcal{H}^k\left(B_{\bar{d}_{\infty}}(\bar{p}_{\infty},20)\right)\ =\ 
|H|\left|B_{d_Z}(z,20)\right|\ =\ 
\left|\mathbb{B}^k(20)\right|.
\end{align*}
Therefore, applying \cite[Theorem 1.5]{DPG18} to the  $ncRCD(0,k)$
space $(W_{\infty},d_{\infty}, \mathcal{H}^k)$, we have  
\begin{align*}
B_{d_{\infty}}(\bar{p}_{\infty},10)\ \equiv\ \mathbb{B}^k(10).
\end{align*}
This concludes the proof of the lemma.
\end{proof}
Now since $\lim_{i\to\infty}d_{GH}\left(B_{\bar{g}_i}(\bar{p}_i,10),
B_{d_{\infty}}(\bar{p}_{\infty}, 10)\right)= 0$, Lemma~\ref{lem: isometry}
produces a contradiction to (\ref{eqn: contradiction_delta_AE}) as $i\to
\infty$, and this contradiction shows that that for any $i$ sufficiently large,
\begin{align}\label{eqn: Y_GH}
10R^{-1}\delta'_i\ :=\ d_{GH}\left(B_{\bar{g}_i}(\bar{p}_i,10),
\mathbb{B}^k(10)\right)\ <\ \delta_{AE}\left(2^{-1}\varepsilon'\right).
\end{align}
Without loss of generality, we may assume that $\delta'_i\ge  \delta_i$.

 On the other hand, since for each $i$ the map $\pi_{i,0}:
 \pi_i^{-1}(B_{g_i'}(p_i,25)) \to Y_i$ is a normal covering with deck
 transformation group $\ker \phi_i$, so is its restriction to
 $\pi_{i,0}^{-1}(B_{\bar{g}_i}(\bar{p}_i,10) \to B_{\bar{g}_i}(\bar{p}_i,10)$.
 Since the pseudo-local fundamental group $\tilde{\Gamma}_{\delta_i'}(p_i)\le
\pi_1(B_{g_i}(K_i,R),p_i)$ is characterized by
\begin{align*}
\tilde{\Gamma}_{\delta_i'}(p_i)\ \cong\ 
\left\langle \gamma\in \pi_1(B_{g_i}(K_i,R),p_i):\
d_{\pi^{\ast}g_i}(\gamma.\tilde{p}_i,\tilde{p}_i)<2\delta_i'\right\rangle,
\end{align*}
 we have the following identity of almost nilpotent groups: 
\begin{align}\label{eqn: widehat_G}
\widehat{G}_{\delta_i'}(\bar{p}_i)\ =\ 
\left\langle \gamma\in \ker \phi_i:\ d_{\pi^{\ast}g'_i}(\gamma.\tilde{p}_i,
\tilde{p}_i)<20R^{-1}\delta_i' \right\rangle\ =\ 
\tilde{\Gamma}_{\delta_i'}(p_i)\cap \ker \phi_i.
\end{align}
Moreover, since
\begin{align*}
\widehat{G}_{\delta_i'}(\bar{p}_i)\ =\
\tilde{\Gamma}_{\delta_i'}(p_i)\cap \ker \phi_i\ \trianglelefteq\
\tilde{\Gamma}_{\delta_i'}(p_i)\quad \text{and}\quad
[\pi_1(B_{g_i}(K_i,R),p_i):\ker \phi_i]\ =\ |G|\ <\ \infty,
\end{align*} 
we must have $\left[\tilde{\Gamma}_{\delta_i'}(p_i):
\widehat{G}_{\delta_i'}(\bar{p}_i)\right]\le l$, and
thus $\rank\ \widehat{G}_{\delta_i'}(\bar{p}_i) =\rank\
\tilde{\Gamma}_{\delta_i'}(p_i)$ --- the almost nilpotency of
$\widehat{G}_{\delta_i'}(\bar{p}_i)$ is guaranteed by its
normality within $\tilde{\Gamma}_{\delta_i'}(p_i)$, which is almost nilpotent
for $\delta_i$ sufficiently small (see \cite{KW11, NaberZhang}).

Since $\delta_i\le \delta_i'$, we have $\tilde{\Gamma}_{\delta_i}(p_i)\le
\tilde{\Gamma}_{\delta_i'}(p_i)$ by the definition of the pseudo-local
fundamental group, and the Assumption (3), i.e.
$\rank\ \tilde{\Gamma}_{\delta_i}(p_i)=m-k$, then implies that $\rank\
\tilde{\Gamma}_{\delta_i'}(p_i)\ge m-k$. Consequently, we have 
\begin{align}\label{eqn: G[p]_rank}
\rank\ \widehat{G}_{\delta_i'}(\bar{p}_i)\ \ge\ m-k.
\end{align}
Now applying Lemma~\ref{lem: almost_Euclidean} to the normal covering
$\pi_{i,0}^{-1}(B_{\bar{g}_i}(\bar{p}_i,10))\to B_{\bar{g}_i}(\bar{p}_i,10)$
with deck transformation group $\ker \phi_i$, by (\ref{eqn: Y_GH}), (\ref{eqn:
widehat_G}) and (\ref{eqn: G[p]_rank}), we can find some scale
$\tilde{r}:=10^{-1}r_{AE}R\in (0,\frac{R}{10})$, such that for all $i$ large
enough, the isoperimetric constant has lower bound
\begin{align*}
I_{\pi^{\ast}g_i}\left(B_{\pi^{\ast}g_i}(\tilde{p}_i,\tilde{r})\right)\ >\
(1-2^{-1}\varepsilon')I_m,
\end{align*}
contradicting (\ref{eqn: contradiction_epsilon}). As discussed at the very
beginning of the section, this contradiction establishes Theorem~\ref{thm:
main2}. Notice that the \emph{a priori} dependence of $\delta_O$ is on $Z$, but
as $Z\equiv \mathbb{B}^k(25)\slash G$, and $G<O(k)$ has no more than
$l$ elements, there are only finitely many such finite groups, and the
dependence of $\delta_O$ is then reduced to $l$, the order of the local orbifold
group (besides the dimension $k\le m$).

\section{Infranil fiber bundles over controlled orbifolds}
In this section, we apply the local Ricci flow smoothing result to prove
Theorem~\ref{thm: main3}, which characterizes the situation, via topological
data, when a collapsing domain with Ricci curvature lower bound is actually an
infranil fiber bundle over an orbifold. In order to apply Theorem~\ref{thm:
main2} for the smoothing purpose, we need to keep track of the distance change
after locally running the Ricci flow. We have the following distance distortion
estimate according to \cite[Lemma 1.11]{HKRX18}:
\begin{lemma}[Distance distortion]\label{lem: dis_dis}
For any $\alpha\in (0,1)$, there is a positive quantity $\Psi_D(\alpha|m)$
with $\lim_{\alpha\to 0}\Psi_D(\alpha|m)=0$, such that under the assumption of
Theorem~\ref{thm: main1}, for any $x,y\in B_g(p,2)$ and any $t\in
(0,\varepsilon^2_P(m,\alpha)]$, if $d_g(x,y)\le \sqrt{t}$, then we have
\begin{align}\label{eqn: distance_distortion}
\left|d_{g(t)}(x,y)- d_g(x,y)\right|\ \le\ \Psi_D(\alpha|m)\sqrt{t}.
\end{align}
\end{lemma}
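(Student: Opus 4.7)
The plan is to bound $d_{g(t)}(x,y) - d_g(x,y)$ from above and below separately, by integrating the Ricci flow evolution $\partial_s g = -2\Rc$ against the curvature control $\left|\Rm_{g(s)}\right|_{g(s)} \le \alpha s^{-1} + \varepsilon_P^{-2} =: K_s$ furnished by Theorem~\ref{thm: main1}. This bound yields $\left|\Rc_{g(s)}\right|_{g(s)} \le (m-1) K_s$ throughout $s \in (0,\varepsilon_P^2]$, and is the central quantitative input.

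For the lower bound $d_{g(t)}(x,y) - d_g(x,y) \ge -\Psi_D(\alpha|m)\sqrt{t}$, I would invoke Perelman's distance change lemma (\cite[Lemma 8.3]{Perelman}): at any smooth time $s \in (0,t]$ with $d_{g(s)}(x,y) \ge 2r_s$ and $\left|\Rm_{g(s)}\right|_{g(s)} \le K_s$ on $B_{g(s)}(x,r_s) \cup B_{g(s)}(y,r_s)$,
\[
\frac{d^+}{ds}\, d_{g(s)}(x,y) \;\ge\; -C(m)\left(K_s r_s + r_s^{-1}\right).
\]
Choosing $r_s := K_s^{-1/2}$ to balance the two terms reduces the right-hand side to $-C(m)\sqrt{K_s}$, and integrating over $s \in (0,t]$ produces
\[
d_{g(t)}(x,y) - d_g(x,y) \;\ge\; -C(m)\!\int_0^t\! \sqrt{\alpha/s + \varepsilon_P^{-2}}\,ds \;\ge\; -C'(m)\left(\sqrt{\alpha}+\varepsilon_P^{-1}\sqrt{t}\right)\sqrt{t}.
\]
In the complementary regime where $d_{g(s)}(x,y) < 2r_s$, both $d_g(x,y)$ and $d_{g(s)}(x,y)$ are commensurate to $\sqrt{s/\alpha}$, so the bound is automatic.

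For the upper bound $d_{g(t)}(x,y) - d_g(x,y) \le \Psi_D(\alpha|m)\sqrt{t}$, I would take a $g$-minimizing geodesic $\gamma$ from $x$ to $y$ with $L_g(\gamma) = d_g(x,y) \le \sqrt{t}$ and estimate
\[
d_{g(t)}(x,y) \;\le\; L_{g(t)}(\gamma) \;=\; L_g(\gamma) - \int_0^t\!\int_\gamma \frac{\Rc_{g(s)}(T,T)}{|T|_{g(s)}}\,du\,ds.
\]
A Gronwall-type control of $\ell(s) := L_{g(s)}(\gamma)$ using $\left|\Rc_{g(s)}\right|_{g(s)} \le (m-1)K_s$ delivers the desired estimate once the singularity of $K_s$ at $s=0$ is handled by splitting $[0,t]$ into a small initial interval $[0,s_\ast(\alpha)]$, on which the initial Ricci lower bound $\Rc_g \ge -(m-1)g$ and the $C^0$-continuity of $g(s)$ keep $|T|_{g(s)}$ close to $|T|_g$, and a tail $[s_\ast(\alpha),t]$ on which the Shi-type bound $|\Rm_{g(s)}|\le K_s$ governs. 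Equivalently, a symmetric application of Perelman's estimate to a $g(t)$-minimizing geodesic gives the matching upper bound directly.

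The main obstacle is the singularity of $K_s$ at $s=0$: a naive Gronwall on $\ell(s)$ diverges because $\int_0^t \alpha/s\,ds = +\infty$. The essential saving is that Perelman's estimate replaces $K_s$ by $\sqrt{K_s}$ in the derivative bound for $d_{g(s)}(x,y)$, rendering the singular integral $\int_0^t \sqrt{\alpha/s}\,ds = 2\sqrt{\alpha t}$ finite and of the desired form $\sqrt{\alpha}\cdot\sqrt{t}$. This yields $\Psi_D(\alpha|m) = O(\sqrt{\alpha})$ as $\alpha \to 0$, establishing the stated modulus of continuity.
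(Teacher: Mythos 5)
Your strategy is genuinely different from the paper's, and it has a real gap on the expansion side of the estimate. The paper does not attempt a direct ODE/Gronwall argument on $M$. Instead, it lifts to the universal covering $\pi\colon X\to B_g(p,6)$ of the conformally modified metric, observes that $(X,\pi^{\ast}h)$ satisfies the \emph{hypotheses} of Perelman's pseudo-locality (almost-Euclidean isoperimetric constant) at every point, applies the already-established distance distortion lemma of Huang--Kong--Rong--Xu (\cite[Lemma 1.11]{HKRX18}) on the cover, and then descends the estimate to $B_g(p,2)$ via the $\Gamma$-equivariance of the lifted flow. The point is that \cite[Lemma 1.11]{HKRX18} is invoked from the \emph{assumption} of pseudo-locality, not merely from the curvature \emph{conclusion} $\lvert\Rm_{g(t)}\rvert\le \alpha t^{-1}+\varepsilon_P^{-2}$, and that assumption only holds after passing to the non-collapsed cover.

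The lower-bound half of your argument is the plausible part: Perelman's \cite[Lemma 8.3]{Perelman} with $r_s=K_s^{-1/2}$ does yield $\frac{d^+}{ds}d_{g(s)}(x,y)\ge -C(m)\sqrt{K_s}$ wherever the curvature bound holds, and $\sqrt{K_s}$ is integrable. But notice that $\int_0^t\sqrt{\alpha/s+\varepsilon_P^{-2}}\,ds\le 2\sqrt{\alpha t}+\varepsilon_P^{-1}t$, and over the full allowed range $t\in(0,\varepsilon_P^2]$ the second term contributes up to $\sqrt{t}$. So from the raw curvature bound alone you only get $\Psi_D\le C(m)(\sqrt{\alpha}+1)$, which does \emph{not} tend to $0$ as $\alpha\to 0$. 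Your claim $\Psi_D=O(\sqrt\alpha)$ therefore does not follow from $\lvert\Rm_{g(t)}\rvert\le\alpha t^{-1}+\varepsilon_P^{-2}$ by the argument you give; extracting the $\lim_{\alpha\to0}\Psi_D=0$ behavior is exactly what \cite[Lemma 1.11]{HKRX18} accomplishes, and it uses the isoperimetric hypothesis rather than the post-hoc curvature conclusion.

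The upper-bound half is where the proposal genuinely fails. You need to control $L_{g(t)}(\gamma)$, and from $\Rc_{g(s)}\ge -(m-1)K_s\,g(s)$ the Gronwall estimate gives $L_{g(t)}(\gamma)\le L_g(\gamma)\exp\bigl((m-1)\int_0^t K_s\,ds\bigr)$, which is vacuous since $\int_0^t \alpha/s\,ds=+\infty$. Your proposed fix --- splitting $[0,t]$ at $s_\ast(\alpha)$ and using ``the initial Ricci lower bound $\Rc_g\ge -(m-1)g$ and the $C^0$-continuity of $g(s)$'' on $[0,s_\ast]$ --- is circular: without an integrable bound on $\lvert\Rc_{g(s)}\rvert$ near $s=0$ there is no a priori $C^0$-closeness of $g(s)$ to $g(0)$, and the initial Ricci bound at $s=0$ does not propagate forward under Ricci flow. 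There is also no ``symmetric application of Perelman's estimate'': \cite[Lemma 8.3]{Perelman} is one-sided; it bounds $\frac{d}{ds}d_{g(s)}$ from \emph{below} using a Ricci \emph{upper} bound via Laplacian comparison on the distance function, and there is no companion estimate that bounds $\frac{d}{ds}d_{g(s)}$ from above with the same favorable $\sqrt{K_s}$ scaling from a Ricci lower bound. The additive upper bound is in fact the delicate half, and this is precisely where the paper relies on the non-collapsed structure on the covering space (compare with Theorem~\ref{thm: dis_dis_2} in the appendix, where without the covering trick one only obtains a weaker, multiplicative distortion estimate $d_{g(t)}\le C_D\,d_{g(0)}^{1-4(m-1)\alpha}$, and only after additionally assuming doubling and Poincar\'e bounds on the initial data).
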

\begin{proof}
As in the proof of Theorem~\ref{thm: main1}, we consider the
universal covering $\pi:X\to B_g(p,6)$ and we have a Ricci flow solution
$\tilde{h}(t)$ on $X$, with $\tilde{h}(0)=\pi^{\ast}h$ and $\tilde{h}(0)\equiv
\pi^{\ast}g$ on $B_g(p,4)$. Notice that for each $t\in[0,\varepsilon_P^2]$, the
fundamental group $\Gamma:=\pi_1(B_g(p,6))$ acts on $(X,\tilde{h}(t))$ by
discrete isometries and the Ricci flow $g(t)$ on $B_g(p,4)$ is the quotient flow
$(\pi^{-1}(B_g(p,4)),\tilde{h}(t))\slash \Gamma$.

Recall that $(X,\pi^{\ast}h)$ satisfies the assumption of
Theorem~\ref{thm: CTY11} at every point, then we could apply \cite[Lemma
1.11]{HKRX18} to find some positive quantity $\Psi_D(\alpha|m)$
with $\lim_{\alpha\to 0}\Psi_D(\alpha|m)=0$, such that for any
$\tilde{x},\tilde{y}\in X$ with $d_{\pi^{\ast}g}(\tilde{x},\tilde{x})\le
\sqrt{t}$, we have the estimate
\begin{align*}
\left|d_{\tilde{h}(t)}(\tilde{x},\tilde{y})
-d_{\pi^{\ast}g}(\tilde{x},\tilde{y})\right|\ \le\ \Psi_D(\alpha|m)\sqrt{t}.
\end{align*}

By the discussion above, the isometric action of $\Gamma$ on
$(\pi^{-1}(B_g(p,4)),\tilde{h}(t))$ for any $t\in [0,\varepsilon_P^2]$ ensure
that the above distance comparison descends to $B_g(p,2)$, since any minimal
geodesic $\gamma$ realizing the ($g$- or $g(t)$-)distance between two points in
$B_g(p,2)$ lie entirely within $B_g(p,4)$, and is lifted to a minimal 
($\pi^{\ast}g$- or $\tilde{h}(t)$-)geodesic $\tilde{\gamma}$ with the
same length. This gives the desired distance distortion estimate (\ref{eqn:
distance_distortion}).
\end{proof}

We are now ready to prove the infranil fiber bundle theorem:
 \begin{proof}[Proof of Theorem~\ref{thm: main3}] 
 Let us recall that in \cite[Theorem 2.1]{CFG92} there is dimensional constant
 $\delta(m)>0$ which we denote $\lambda_{CFG}$. Let us fix the largest possible
 $\alpha_F\in (0,10^{-1})$ so that $\Psi_D(\alpha_F|m)\le
 10^{-2}\lambda_{CFG}^2$, and we can consider $\alpha_F$ as a constant only
 determined by $m$. Now there are constants $\delta_{O}(\alpha_F)>0$ and
 $\varepsilon_{O}(\alpha_F)>0$ obtained from Theorem~\ref{thm: CTY11}. Now
 we put
\begin{align*} 
\delta_F(m,l,\bar{\iota},\alpha_F)\ :=\
10^{-2}\min\left\{\varepsilon_{O}(m,l,\bar{\iota},\alpha_F)
\lambda_{CFG}^2,\ \delta_{O}(m,l,\bar{\iota},\alpha_F)\right\}.
\end{align*}

For any $\delta<\delta_F$ and any $p\in B_g(K,4\bar{\iota})$, we have 
\begin{align*}
d_{GH}\left(B_g(p,\bar{\iota}), \mathbb{B}^k(\bar{\iota})\slash
\Gamma_{\Phi(p)}\right)\ \le\ &d_{GH}\left(B_g(p,\bar{\iota}),
B_{d_Z}(\Phi(p),\bar{\iota}) \right)+ d_{GH}\left(B_{d_Z}(\Phi(p), \bar{\iota}),
\mathbb{B}^k(\bar{\iota})\slash \Gamma_{\Phi(p)} \right)\\
<\ &\delta_O,
\end{align*}
and Theorem~\ref{thm: main2} applies to start the Ricci flow over $B_g(K,
\bar{\iota})$ for a period no shorter than $T_F:=\varepsilon_O^{2}$, satisfying 
the curvature bound
\begin{align*}
\sup_{B_g(K,\bar{\iota})}\left|\Rm_{g(T_F)}\right|_{g(T_F)}\ \le\ 2T_F^{-1}.
\end{align*} 
Moreover, Shi's estimate in \cite{Shi} provides finite constants $C_n$ ($n\ge
1$) such that
\begin{align*}
\sup_{B_g(K,\bar{\iota})}\left|\nabla^n\Rm_{g(T_F)}\right|_{g(T_F)}\ \le\
C_n(m,T_F,\alpha_F).
\end{align*} 
The rescaled metric $g(T_F)':=\min\left\{2T_F,\bar{\iota}^2\right\}^{-1}g(T_F)$
then satisfies, with $C_0'=1$, the regularity estimates
\begin{align}\label{eqn: regularity_gbar}
\forall n\in
\mathbb{N},\quad
\sup_{B_g(K,\bar{\iota})}\left|\nabla^n\Rm_{g(T_F)'}\right|_{g(T_F)'}\ \le\
C'_n(m,T_F,\alpha_F).
\end{align}
By the scaling invariant estimate in Lemma~\ref{lem: dis_dis}, we know that the
identity map on $B_g(K,\bar{\iota})$ provides a
$(1,\Psi_D(\alpha_F))$-Gromov-Hausdorff approximation between the metrics
$(2T_F)^{-1}g$ and $g(T_F)'$ --- here for any $\varepsilon >0$, we define a
$(1,\varepsilon)$-\emph{Gromov-Hausdorff approximation} between metric spaces
$(A,d_A)$ and $(B,d_B)$ as an $\varepsilon$-dense map $\varphi: A\to B$ such
that $\forall a\in A$, $\varphi|_{B_{d_A}(a,1)}:B_{d_A}(a,1)\to
B_{d_B}(\varphi(a),1+\varepsilon)$ is an $\varepsilon$-Gromov-Hausdorff
approximation; see also \cite[\S 2]{HKRX18} for the definition. 

Therefore, the region $(B_{g}(K,\bar{\iota}),g(T_F)')$ collapses on unit scale
to a $(k,l,1)$-controlled orbifold with regular metric (\ref{eqn:
regularity_gbar}), and combining the localization and the frame bundle argument
(see \cite{Fukaya89, CFG92}), it is easily seen that $K$ has an open
neighborhood $U\Subset B_g(K,\bar{\iota})$ which fibers over $Z_{4\bar{\iota}}$
by infranil fibers. In fact, by \cite[Theorem 1.7]{CFG92}, there is a nilpotent
Killing structure over $U$. Here we notice that the only technical alternation
is replacing \cite[Theorem 2.6]{CFG92} with \cite[Theorem 2.2]{HKRX18}, which
says that the $(1,\Psi_D)$-Gromov-Hausdorff collapsing sufficies to produce the
same conclusions of \cite[Theorem 2.6]{CFG92}.
\end{proof}

\begin{remark}\label{rmk: Reifenberg}
In \cite{Huang20}, Huang proved a fiber bundle theorem for manifolds with
uniform Ricci curvature and local rewinding volume lower bounds, that collapse
to lower dimensional \emph{manifolds} with bounded geometry; see \cite[Theorem
1.3]{Huang20}. The proof relies on applying the canonical Reifenberg method,
developed in \cite{ChCoI, CJN18}, to the non-collapsing local universal
covering space. 
If the collapsing limit is instead a \emph{singular orbifold}, as in the setting
of Theorem~\ref{thm: main3}, an attempt to apply the canonical Reifenberg method
would have to be performed on the finite covering space $Y$ of $B_{g'}(p,25)$,
which has a geodesic $10$-ball Gromov-Hausdorff close to $\mathbb{B}^k(10)$ ---
see the proof of Theorem~\ref{thm: main2} in \S 3. However, the harmonic almost
splitting map obtained from the canonical Reifenberg method is \emph{not}
necessarily equivariant (with respect to the $H$ action on $Y$ and the
$G\cong H$ action on $\mathbb{B}^k(20)$) --- it is not obvious that such a
harmonic map defines a topological fiber bundle structure on
$B_{g'}(p,10)$ over the orbifold neighborhood. On the contrary, the Ricci flow
is a natural regularization that respects isometric group actions,  
particularly adaptive to the collapsing setting. 
While it is possible that an equivariant Reifenberg method appears in future
works, the Ricci flow smoothing method (Theorems~\ref{thm:  main1} and
\ref{thm: main2}) remains necessary at the current stage.
\end{remark}

\begin{remark}\label{rmk: HR20}
We also point out that when $Z$ is (a bounded open subset of) a
$k$-dimensional manifold with locally bounded geometry, 
Theorem~\ref{thm: main3} provides a localization of \cite[Theorem B]{HKRX18}:
see \cite[Remark 2]{HRW20}. But our emphasis here is the extra orbifold
singularity which may very well occur in the collapsing geometry; see
Remark~\ref{rmk: general_orbifold}.
\end{remark}

In fact, if $f:B_g(p,20)\to \mathbb{B}^k(20)\slash G_O$ is an infranil
fiber bundle over an orbifold neighborhood with $G_O<O(k)$,
$\left|G_O\right|<\infty$, and $\diam_g(f^{-1}(z))
<\left|G_O\right|^{-1}\delta$ for any $z\in \mathbb{B}^k(20)\slash
G_O$, we also have the structure described by the assumptions of
Theorem~\ref{thm: main3} --- this presents a local inverse to this theorem. For
more detailed descriptions about the infranil fiber bundle structure over an
orbifold chart, as well as the locally unwarpped neighborhood, we refer the
readers to \cite[\S 7]{Fukaya89} and \cite[\S 3]{Foxy1908}.

We will follow the notations in \cite[Definition]{Fukaya89}. By the fact that
both $\mathbb{B}^k(20)\times F$ and $B_g(p,20)$ are smooth manifolds, it
implies that the action of the finite orbifold group $G_O$ is free of fixed
points, implying that $\theta: G_O \to Aff(F)$ is injective. On the other
hand, let $\pi:X\to \mathbb{B}^k(20)\times F$ denote the universal covering.
We clearly see that $X\cong \mathbb{B}^k(20)\times \tilde{F}$, with $\tilde{F}$
denoting the universal covering of $F$. Consequently,
$\pi_1(\mathbb{B}^k(20)\times F)=\pi_1(F)$. Notice that the map $q\circ
\pi:X\to B_g(p,20)$, with $q: \mathbb{B}^k(20)\times F\to B_g(p,20)$ denoting
the quotient map of the $G_O$ action, is actually a covering map, thanks to
the discrete and free action of $G_O$. Then the simple connectedness of $X$
makes it the universal covering space of $B_g(p,20)$. Since $\pi$ is a normal
covering, we have $\pi_1(F)\triangleleft \pi_1(B_g(p,20))$, with the quotient
$H:=\pi_1(B_g(p,20))\slash \pi_1(F)$ acting on $\mathbb{B}^k(20)\times F$ so
that $(\mathbb{B}^k(20)\times F)\slash H=B_g(p,20)$. By the correspondence
between the covering spaces of $B_g(p,20)$ and subgroups of $\pi_1(B_g(p,20))$,
we know that $H\cong G_O$, whence a surjective group homomorphism
\begin{align*}
\pi_1(B_g(p,20))\ \twoheadrightarrow\ \pi_1(B_g(p,20))\slash
\pi_1(F)\ \cong\ G_O.
\end{align*} 
To see that $\rank\ \tilde{\Gamma}_{\delta}(p)=m-k$, we notice
that  $\diam_gq^{-1}(f^{-1}(Z))\le \delta$ for all
$z\in \mathbb{B}^k(20)\slash G_O$, and thus
$\tilde{\Gamma}_{\delta}(\pi(\tilde{p})) =\pi_1(\mathbb{B}^k(20)\times
F)=\pi_1(F)$, for any $\tilde{p}\in \pi^{-1}(p) \subset X$. Clearly, $\pi_1(F)$
is almost nilpotent with $\rank\ \pi_1(F)= \dim F=m-k$. The following splitting
short exact sequence of groups
\begin{align*}
0\longrightarrow \pi_1(F)\longrightarrow
\pi_1(B_g(p,20))\stackrel{\theta}{\longleftrightarrows} 
G_O\longrightarrow 0,
\end{align*}
then tells that
$\tilde{\Gamma}_{\delta}(p)=\tilde{\Gamma}_{\delta}(\pi(\tilde{p}))\rtimes
G_O$, implying that $\rank\ \tilde{\Gamma}_{\delta}(p)=\rank\
\pi_1(F)=m-k$.

\appendix
\section{Local distance distortion estimates for Ricci flows with collapsing
initial data}

The distance distortion estimates along Ricci flows is a crucial issue in
view of its many natural applications (besides Lemma~\ref{lem: dis_dis} in the
proof of Theorem~\ref{thm: main3}, see also e.g. \cite{ChenWang19} for a
survey). In \cite{Foxy1808} a uniform distance distortion estimate for Ricci
flows with collapsing initial data has been obtained. That estimate is for
compact Ricci flow solutions and only compares the distance functions on nearby
\emph{positive} time slices. Here we present a new distance distortion estimate
which can be seen both as an extension (to $t=0$) and a localization of the
estimate in \cite[Theorem 1.1]{Foxy1808} under some extra assumption on the
Ricci curvature:
\begin{theorem}\label{thm: dis_dis_2}
Given a positive integer $m$, positive constants $\bar{C}_0$, $C_R$, $T\le 1$
and $\alpha\in (0,\frac{1}{2(m-1)})$, there are constants 
$C_D(\bar{C}_0,C_R,m)\ge 1$ and $T_D(\bar{C}_0,C_R,m)\in (0,T]$ such that for an
$m$-dimensional complete Ricci flow $(M,g(t))$ defined for $t\in [0,T]$, if
for some $x_0\in M$ and any $t\in [0,T]$ we have
\begin{align}
\Sc_{g(0)}\ &\ge\ -C_R\ \text{in}\ B_{g(0)}(x_0,10),
\label{eqn: key_Sc}\\
\left|\Rc_{g(t)}\right|_{g(t)}\ &\le\ 2(m-1)\alpha t^{-1}\ \text{in}\ 
B_{g(t)}\left(x_0,10+\sqrt{t}\right),
\label{eqn: key_spacetime_Rc}
\end{align}
and the initial metric has a uniform bound $\bar{C}_0$ on the doubling and
Poincar\'e constant for the geodesic ball $B_{g(0)}(x_0,10)$, then
for any $x,y\in B_{g(0)}(x_0,\sqrt{T_D})$ and $t\in [0,T_D]$, we
have
\begin{align}
C_D^{-1}d_{g(0)}(x,y)^{1+4(m-1)\alpha}d_{g(0)}(x,y)\ \le\ d_{g(t)}(x,y)\ \le\
C_Dd_{g(0)}(x,y)^{1-4(m-1)\alpha}.
\end{align}
\end{theorem}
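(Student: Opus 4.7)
My plan is to prove the estimate by introducing an intermediate positive time $s \in (0, t)$ and combining two distance comparisons: between $d_{g(t)}$ and $d_{g(s)}$, and between $d_{g(s)}$ and $d_{g(0)}$. For the first comparison, the Ricci bound \eqref{eqn: key_spacetime_Rc} gives, for any smooth curve $\gamma$ whose image stays inside the spatial region where \eqref{eqn: key_spacetime_Rc} is valid throughout $[s,t]$,
\begin{align*}
\left|\frac{d}{d\tau} \log g_\tau(\dot\gamma, \dot\gamma)\right|\ \le\ 4(m-1)\alpha\,\tau^{-1},
\end{align*}
so that $L_{g(t)}(\gamma) \le (t/s)^{2(m-1)\alpha} L_{g(s)}(\gamma)$ and symmetrically in the reverse direction. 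Provided $T_D$ is small enough that, for every $x, y \in B_{g(0)}(x_0, \sqrt{T_D})$, the $g(\tau)$-minimising geodesics between $x$ and $y$ remain inside $B_{g(\tau)}(x_0, 10+\sqrt{\tau})$ for all $\tau \in [s,t]$ (which can be bootstrapped from smallness of the starting ball together with a continuity argument in $\tau$), taking infima over curves yields
\begin{align*}
\left(\frac{s}{t}\right)^{2(m-1)\alpha}d_{g(s)}(x,y)\ \le\ d_{g(t)}(x,y)\ \le\ \left(\frac{t}{s}\right)^{2(m-1)\alpha}d_{g(s)}(x,y)
\end{align*}
for all $0 < s < t \le T_D$.

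The heart of the argument is the second comparison, which must control the distance function at the initial time without the benefit of a Ricci upper bound at $t = 0$. I would establish that for every $s \in (0, T_D]$ and $x, y \in B_{g(0)}(x_0, \sqrt{T_D})$, the quantities $d_{g(s)}(x,y)$ and $d_{g(0)}(x,y) + \sqrt{s}$ are comparable up to multiplicative constants depending only on $\bar{C}_0, C_R, m$. The doubling and Poincar\'e hypotheses on $B_{g(0)}(x_0,10)$ feed the Grigoryan--Saloff-Coste machinery to produce two-sided Gaussian heat kernel bounds for $(B_{g(0)}(x_0,10), g(0))$, and the scalar curvature lower bound forbids instantaneous volume collapse under the flow. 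Combining these inputs with a short-time parabolic comparison --- either by evolving distance-like cut-off functions along the conjugate heat equation of $(M,g(t))$, or by adapting the reduced-distance monotonicity used in \cite{Foxy1808} to the present local, collapsing setting --- one sees that the heat kernel of $g(s)$ differs from that of $g(0)$ only by factors controlled in terms of $\bar{C}_0, C_R, m$ on scales $\gtrsim \sqrt{s}$, and a Varadhan-type short-time asymptotic then translates this into the claimed distance comparison. This step is the principal obstacle and the main place where the initial-time hypotheses enter essentially.

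The two ingredients combine via the choice $s = d_{g(0)}(x,y)^2$. Since $d_{g(0)}(x,y) \le 2\sqrt{T_D}$, we have $s \le 4T_D \le T$ after possibly shrinking $T_D$, so the intermediate time lies in the allowed range. Plugging $s = d_{g(0)}(x,y)^2$ into the positive-time comparison and invoking the initial-time one,
\begin{align*}
d_{g(t)}(x,y)\ \le\ C\,\left(\frac{t}{d_{g(0)}(x,y)^2}\right)^{2(m-1)\alpha} d_{g(0)}(x,y)\ =\ C\,t^{2(m-1)\alpha}\,d_{g(0)}(x,y)^{1-4(m-1)\alpha},
\end{align*}
and the analogous lower bound produces exponent $1+4(m-1)\alpha$. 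Since $t \le T_D \le 1$, the prefactors $t^{\pm 2(m-1)\alpha}$ can be absorbed into a single constant $C_D = C_D(\bar{C}_0, C_R, m)$, completing the argument.
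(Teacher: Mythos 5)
The outer framing of your argument---introducing an intermediate time $s=d_{g(0)}(x,y)^2$, using the two-sided Ricci bound at positive times to get the ratio estimate $\left(\frac{s}{t}\right)^{2(m-1)\alpha}\le d_{g(t)}/d_{g(s)}\le\left(\frac{t}{s}\right)^{2(m-1)\alpha}$, and then reducing to a comparison between $d_{g(s)}$ and $d_{g(0)}$ at scale $\sqrt{s}$---matches the paper's strategy. But the ``heart of the argument'' as you call it, namely the claim that $d_{g(s)}(x,y)\lesssim d_{g(0)}(x,y)$ when $s\approx d_{g(0)}(x,y)^2$, is left unproven: you sketch a route through two-sided Gaussian heat kernel estimates at $t=0$, a parabolic/conjugate-heat-equation comparison, and Varadhan short-time asymptotics, but none of these steps is carried out, and it is not clear that chain closes. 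Varadhan's formula gives pointwise asymptotics of $\log p_t$, not uniform two-sided control with explicit constants at finite scale; and there is no off-the-shelf comparison between the heat kernels of $g(0)$ and $g(s)$ in the collapsing regime with only scalar curvature bounded below. So the central estimate is asserted rather than proved.

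The paper proves exactly this step (its Claim~\ref{clm: dis_dis_claim}) by an entirely different and concrete mechanism: a packing argument combined with Wang's local entropy theory. The doubling and Poincar\'e bounds enter via the Saloff-Coste Sobolev inequality, which together with the initial scalar curvature lower bound gives a lower bound on the local $\boldsymbol{\nu}$-entropy of $g(0)$-balls of radius $\sim d_0$; the almost-monotonicity of local entropy along the flow and the scalar curvature control from \eqref{eqn: key_spacetime_Rc} then propagate this to a uniform \emph{volume ratio lower bound} on small $g(t_0)$-balls; since a maximal $2d_0$-separated net on the initial minimizing geodesic produces $g(t_0)$-disjoint balls all contained, by \eqref{eqn: d0_le_dt}, in one $g(0)$-ball of comparable size, the number of net points is bounded, and hence so is $d_{g(t_0)}(x,y)/d_0$. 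Your sketch identifies the correct bottleneck but replaces this mechanism with a heat kernel heuristic that is not carried through.

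A smaller issue: your proposed two-sided comparison $d_{g(s)}(x,y)\asymp d_{g(0)}(x,y)+\sqrt{s}$ is false as stated (take $x=y$). What is actually available, and what the paper uses, is the one-sided Perelman-type bound $d_{g(s)}(x,y)\ge d_{g(0)}(x,y)-C(m)\sqrt{s}$ from \cite[Lemma 8.3]{Perelman}; the lower bound $d_{g(t)}\ge C_D^{-1}d_{g(0)}^{1+4(m-1)\alpha}$ is then deduced by combining this with the distance-expansion control of the Claim and a subdivision argument, not from a matching Gaussian-type lower bound. You would need to make this part precise as well.
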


The proof of the lemma is based on the theory of local entropy \cite{Wang18},
as well as the local entropy lower bound by volume ratio obtained in
\cite{Foxy1808}. It is inspired by the corresponding results for
non-collapsing initial data in \cite{HKRX18}, and relies on a ball
containment argument as surveyed in \cite[\S 3]{ChenWang19}. 

\begin{remark}\label{rmk: SC92}
  Here we will rely on \cite[Theorem 2.1]{Saloff-Coste92}, and we point out
  that the assumptions \cite[(1) and (2)]{Saloff-Coste92} are only needed
  locally, i.e. the same conclusion of \cite[Theorem 2.1]{Saloff-Coste92} holds
  even if these conditions on the doubling and Poincar\'e constants are assumed
  only within the geodesic ball $B_{g(0)}\left(x_0,10\right)$: notice that the
  only global result needed in the proof of this theorem is \cite[Theorem
 2.2]{Saloff-Coste92}, in which the constants involved only depend on the
 dimension of the manifold; whereas all other arguments leading to the
 application of \cite[Theorem 2.2]{Saloff-Coste92} only depend on the doubling
 and Poincar\'e constants within the geodesic ball in question.
\end{remark}

\begin{proof}
By taking $r_0=\sqrt{t}$ in \cite[Lemma 8.3]{Perelman} (see also
\cite[\S 17]{Hamilton93}), the Ricci curvature upper bound in (\ref{eqn:
key_spacetime_Rc}) tells that if $x,y\in B_{g(t)}\left(x_0,10\right)$ whenever 
$t\le T$, then
\begin{align}\label{eqn: d0_le_dt}
d_{g(0)}(x,y)\ \le\ d_{g(t)}(x,y)+3(m-1)\sqrt{t}. 
\end{align} 
 To control the distance expansion, we first establish the following \emph{a
 priori} estimate 
\begin{claim}\label{clm: dis_dis_claim}
There exists a uniform constant $\bar{C}_1(\bar{C}_0,C_R, m)>0$ such that if
$x,y \in  B_{g(t)}(x_0,10)$ for any $t\le
t_0:=d_0^2$ with $d_0:=d_{g(0)}(x,y)\le \sqrt{T}$, then
$d_{g(t_0)}(x,y)\le \bar{C}_1d_0$.
\end{claim}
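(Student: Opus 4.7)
The content of Claim~\ref{clm: dis_dis_claim} is the upper bound $d_{g(t_0)}(x,y) \le \bar{C}_1 d_0$ at the critical scale $t_0 = d_0^2$; the matching lower direction is already implied by (\ref{eqn: d0_le_dt}). The immediate difficulty is that the Ricci upper bound (\ref{eqn: key_spacetime_Rc}) is singular at $t = 0$: the standard ODE estimate $\frac{d}{dt}\log d_{g(t)}(x,y) \le 2(m-1)\alpha\, t^{-1}$ along a minimizing geodesic integrates to a factor $(t_0/s)^{2(m-1)\alpha}$ that blows up as $s \to 0$, so pointwise curvature alone does not close the estimate. The doubling and Poincar\'e assumptions at $t = 0$ are therefore essential to supply the missing structural information.

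The plan is to derive the claim via Gaussian heat-kernel bounds for the Ricci flow fed by initial-time non-collapsing. First, applying Saloff-Coste's theorem (Remark~\ref{rmk: SC92}) to the local doubling and Poincar\'e constants $\bar{C}_0$ together with the scalar lower bound $-C_R$ on $B_{g(0)}(x_0,10)$ yields Gaussian heat-kernel bounds for $(M,g(0))$ locally, and in particular a uniform time-zero volume ratio lower bound
\begin{equation*}
|B_{g(0)}(z,r)|_{g(0)} \ge c_0(\bar{C}_0, C_R, m)\, r^m
\qquad\text{for } z \in B_{g(0)}(x_0,5),\ r \in (0,1].
\end{equation*}

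Next, by the local $\mathcal{W}$-entropy theory of \cite{Wang18} and its lower bound by the initial volume ratio established in \cite{Foxy1808}, the preceding volume ratio lower bound propagates to a uniform lower bound for the local entropy at every base point $(z,s)$ with $z \in B_{g(s)}(x_0,4)$ and $s \in (0, t_0]$. Combined with Perelman's non-collapsing and a Hein--Naber-type argument, this entropy bound gives Gaussian estimates for the Ricci-flow heat kernel $K(y, t_0\,|\,x, 0)$: an upper Gaussian bound with exponential factor $\exp(-c\, d_{g(t_0)}(x,y)^2 / t_0)$, together with a matching lower bound of the form $\exp(-C\, d_{g(0)}(x,y)^2 / t_0)$ expressed in terms of the \emph{initial} distance. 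Plugging $d_{g(0)}(x,y)^2/t_0 = 1$ into the lower bound yields $K(y, t_0\,|\,x, 0) \ge c'' t_0^{-m/2}$, and the comparison with the upper bound forces $d_{g(t_0)}(x,y)^2/t_0 \le C'$, i.e., $d_{g(t_0)}(x,y) \le \bar{C}_1 d_0$ for $\bar{C}_1 := \sqrt{C'}$.

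The main obstacle is the heat-kernel lower bound expressed in terms of the \emph{initial-time} distance $d_{g(0)}$ rather than $d_{g(t_0)}$. Because the metric distortion between $g(0)$ and $g(t_0)$ is not uniformly bounded under the singular Ricci assumption (reflecting possible volume collapse), this lower bound cannot be obtained by a naive comparison of metrics; instead it must be extracted from the local-entropy framework of \cite{Wang18, Foxy1808} together with the scalar lower bound (\ref{eqn: key_Sc}), carefully propagating the initial non-collapsing through the full interval $(0, t_0]$. Once this Gaussian lower bound is in hand, the rest of the argument — including the ball-containment consistency between the non-collapsing from the entropy and the Bishop--Gromov upper bound $|B_{g(t_0)}(z,r)|_{g(t_0)} \le C_2(m)\, r^m$ coming from (\ref{eqn: key_spacetime_Rc}) — is essentially formal.
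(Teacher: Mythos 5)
The proposal rests on a claim that is false in the setting of this paper: you assert that the doubling and Poincar\'e bounds $\bar{C}_0$ together with the scalar lower bound give a uniform time-zero volume ratio lower bound $|B_{g(0)}(z,r)|_{g(0)} \ge c_0 r^m$. That is not implied. A flat torus of arbitrarily small side length has uniformly bounded doubling and Poincar\'e constants (and scalar curvature zero) but no lower bound on $|B(z,r)|/r^m$ once $r$ exceeds the diameter. Indeed the entire point of this section --- and of the title of the paper --- is that the initial data is \emph{collapsing}, i.e.\ no volume ratio lower bound is available. What Saloff-Coste's theorem delivers from $\bar{C}_0$ is a volume-normalized local Sobolev inequality (with the ball volume appearing explicitly as a factor), not non-collapsing; this is precisely what allows the paper's local entropy lower bound (\ref{eqn: nu_8md0_lb}) to carry the possibly tiny quantity $\log|B_{g(0)}(x,8md_0)|d_0^{-m}$ on its right-hand side.

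Beyond that, the remaining mechanism (Gaussian upper bound in $d_{g(t_0)}$ plus Gaussian lower bound in $d_{g(0)}$) is asserted but not derived; you correctly flag the lower bound in terms of the initial distance as the main obstacle, then state it ``must be extracted from the local-entropy framework'' without saying how. The paper does not use a heat-kernel comparison at all. Its actual argument is a ball-counting one: it places a maximal $2d_0$-separated net (in $d_{g(t_0)}$) $\{\sigma(s_i)\}$ along a minimizing $g(0)$-geodesic from $x$ to $y$, shows via (\ref{eqn: d0_le_dt}) and (\ref{eqn: ball_containment}) that the disjoint balls $B_{g(t_0)}(\sigma(s_i),d_0)$ all sit inside a fixed $g(0)$-ball around $x$, and then bounds their number $N$ by using the local $\boldsymbol\nu$-entropy: the Sobolev constant and scalar lower bound give the entropy lower bound (\ref{eqn: nu_8md0_lb}) at time $0$, almost-monotonicity (\ref{eqn: nu_i_almost_increasing}) propagates it to time $t_0$, and (\ref{eqn: nu_i_t_ub}) converts it to a volume lower bound for each small $g(t_0)$-ball comparable to $|B_{g(0)}(x,8md_0)|$. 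Because both sides scale with the (possibly tiny) ambient volume, the comparison yields a dimensional bound on $N$ and hence on $d_{g(t_0)}(x,y)/d_0$, without ever asserting any absolute volume non-collapsing. Your proposal would need to discard the volume ratio claim entirely and supply the Gaussian lower bound argument to have any chance of going through.
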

\begin{proof}[Proof of the claim]
To see this, we let $\sigma:[0,1]\to B_{g(0)}\left(x,d_0\right)$ be a minimal
$g(0)$-geodesic realizing $d_0$, and without loss of generality we may assume
that $d_{g(t_0)}(x,y)\ge 10d_0$. Now let $\{\sigma(s_i)\}_{i=1}^N$ be a
maximal collection of points on the image of $\sigma$ so that
$d_{g(t_0)}(\sigma(s_i),\sigma(s_{j}))\ge 2d_0$ when $i\not=j$. By the
maximality of $\left\{\sigma(s_i)\right\}$, we see that
$B_{g(t_0)}(\sigma(s_i), d_0)\cap B_{g(t_0)}(\sigma(s_j),d_0)=\emptyset$, and
it is also clear that $\left\{B_{g(t_0)}(\sigma(s_i),2d_0)\right\}$ cover
$Image(\sigma)$.
Especially, we have
\begin{align*}
d_{g(t_0)}(x,y)\le \sum_{i=1}^{N-1}d_{g(t_0)}(\sigma(s_i),
\sigma(s_{i+1}))\ \le\ 4Nd_0.
\end{align*} 
We therefore only need to bound $N$ uniformly from above. 
Moreover, by (\ref{eqn: d0_le_dt}) we see that for each $i=1,\ldots,N$,
\begin{align*}
\forall y'\in B_{g(t_0)}(\sigma(s_i),d_0),\quad d_{g(0)}(\sigma(s_i),y')\ \le\
&d_{g(t_0)}(\sigma(s_i),y')+8(m-1)\sqrt{\alpha t_0}\\
\le\ &(1+8(m-1)\sqrt{\alpha})d_0.
\end{align*}
Therefore, it is easily seen that each
$B_{g(t_0)}(\sigma(s_i),d_0)\subset
B_{g(0)}(\sigma(s_i),(1+8(m-1)\sqrt{\alpha})d_0)$, and consequently,
as each $\sigma(s_i)\in B_{g(0)}(x,d_0)$, we have
\begin{align}
\label{eqn: ball_containment}
\bigcup_{i=0}^kB_{g(t_0)}(\sigma(s_i),d_0)\ \subset\ B_{g(0)}(x,
(2+8(m-1)\sqrt{\alpha})d_0).
\end{align}

We now study the local entropy associated to the various metric balls. We first
recall that the uniform bound $\bar{C}_0$ on the doubling and Poincar\'e constants
gives a uniform bound on the Sobolev constant $C_S=C_S(m,\bar{C}_0)$, according
to \cite[Theorem 2.1]{Saloff-Coste92} (see also Remark~\ref{rmk: SC92} and 
\cite[Proposition 2.1]{Foxy1808}). Therefore, following the argument in
\cite[\S 3.1]{Foxy1808}, we get to \cite[(3.1)]{Foxy1808}, and by (\ref{eqn:
key_Sc}) we have
\begin{align*}
&\mathcal{W}\left(B_{g(0)}(x,8md_0),g(0),v^2,\tau\right)\\
\ge\ &\log \left|B_{g(0)}(x,8md_0)\right|d_0^{-m}
-\left(C_R+(8md_0)^{-2}\right)\tau-\frac{m}{2}\log
\left(512C_Sem^3\pi\right).
\end{align*}
for any $\tau>0$ and any $v\in W^{1,2}_0(B_{g(0)}(x,8md_0))$ with
$\int_{M}v^2\dvol_g=1$. Here due to the selection of $v$, we have the Perelman's
$\mathcal{W}$-entropy equal to the \emph{local entropy}
$\mathcal{W}\left(B_{g(0)}(x,8md_0),g(0),v^2,\tau\right)$, as defined in
\cite[\S 2]{Wang18}. Taking the infimum of $\mathcal{W}
\left(B_{g(0)}(x,8md_0),g(0),v^2,\tau\right)$ among all admissible $v$
described above, we see that for any $\tau>0$, 
\begin{align*}
&\boldsymbol{\mu}\left(B_{g(0)}(x,8md_0),g(0),\tau\right)\\
 \ge\ &\log \left|B_{g(0)}(x,8md_0)\right|d_0^{-m}
 -\left(C_R+(8md_0)^{-2}\right)\tau-\frac{m}{2}\log
 \left(512C_Sem^3\pi\right).
\end{align*}
Consequently, for
$\boldsymbol{\nu}\left(B_{g(0)}(x,8md_0),g(0),\tau\right)=\inf_{s\in
(0,\tau]}\boldsymbol{\mu}\left(B_{g(0)}(x,8md_0),g(0),s\right)$ we have
\begin{align}\label{eqn: nu_8md0_lb}
\begin{split}
&\boldsymbol{\nu}\left(B_{g(0)}(x,8md_0),g(0),\tau\right)\\ 
\ge\ &\log \left|B_{g(0)}(x,8md_0)\right|d_0^{-m}
-\left(C_R+(8md_0)^{-2}\right)\tau-\frac{m}{2}\log
\left(512C_Sem^3\pi\right).
\end{split}
\end{align}
On the other hand, by (\ref{eqn: key_spacetime_Rc}) we have
$\left|\Sc_{g(t)}\right|\le 2m(m-1)\alpha t^{-1}$ for $t>0$, and applying
\cite[Theorem 3.6]{Wang18}, we can bound the local entropy from above by volume
ratio:
\begin{align}\label{eqn: nu_i_t_ub}
\boldsymbol{\nu}\left(B_{g(t)}(\sigma(s_i),d_0),g(t),d_0^2\right)\ \le\ \log
\frac{\left|B_{g(t)}(\sigma(s_i),d_0)\right|}{\omega_md_0^m}
+\left(2^{m+7}+2m(m-1)\alpha t^{-1}d_0^2\right);
\end{align}
while by (\ref{eqn: key_spacetime_Rc}) and the effective monotonicity of the
local entropy (\cite[Theorem 5.4]{Wang18}), we see that
\begin{align}\label{eqn: nu_i_almost_increasing}
\boldsymbol{\nu}\left(B_{g(t)}(\sigma(s_i),d_0),g(t),d_0^2 \right)\ \ge\
\boldsymbol{\nu}\left(B_{g(0)}(\sigma(s_i),3d_0),g(0),d_0^2+t \right)-1.
\end{align}
Moreover, since $B_{g(0)}(\sigma(s_i),3d_0)\subset B_{g(0)}(x,8md_0)$ for each
$i=1,\ldots,N$, applying \cite[Proposition 2.1]{Wang18} and (\ref{eqn:
nu_8md0_lb}) we see that
\begin{align}\label{eqn: nu_0_lb}
\begin{split}
&\boldsymbol{\nu}\left(B_{g(0)}(\sigma(s_i),3d_0),g(0),\tau \right)\\ 
\ge\ &\log \left|B_{g(0)}(x,8md_0)\right|d_0^{-m}
-\left(C_R+(8md_0)^{-2}\right)\tau-\frac{m}{2}\log
\left(512C_Sem^3\pi\right).
\end{split}
\end{align}

We now obtain a uniform bound on $N$. Denoting $\boldsymbol{\nu}_{i,s}(r,\tau)
:=\boldsymbol{\nu} (B_{g(s)}(\sigma(s_i),r),g(s),\tau)$ for each
$i=1,\ldots,N$, $r\le 8md_0$, $s\le t_0$ and $\tau\le d_0^2+t_0$, we
consecutively apply (\ref{eqn: nu_i_t_ub}), (\ref{eqn: nu_i_almost_increasing})
and (\ref{eqn: nu_0_lb}) with $t_0=d_0^2$ to see that 
\begin{align}
\begin{split}
\left|B_{g(t_0)}(\sigma(s_i),d_0)\right|d_0^{-m}\ \ge\
&\omega_me^{\boldsymbol{\nu}_{i,t_0}\left(d_0,d_0^2\right)
-2^{m+7}-2m(m-1)\alpha}\\
\ge\ &\omega_m
e^{\boldsymbol{\nu}_{i,0}\left(3d_0,d_0^2+t_0\right)
-2^{m+7}-2m(m-1)\alpha-1}\\
\ge\ &\frac{\omega_m
e^{-2^{m+7}-2C_Rd_0^2-2m(m-1)\alpha-2}}{\left(512C_S em^3\pi
\right)^{\frac{m}{2}}} \left|B_{g(0)}(x,8md_0)\right|d_0^{-m}.
\end{split}
\end{align}
Therefore, by the mutual disjointness of
$\left\{B_{g(t_0)}(\sigma(s_i),d_0)\right\}$ and (\ref{eqn: ball_containment})
we have
\begin{align}
\begin{split}
\left|B_{g(0)}(x,8md_0)\right|_{g(0)}\ \ge\
&\sum_{i=1}^N\left|B_{g(t)}(\sigma(s_i),d_0)\right|_{g(0)}\\
\ge\ &\sum_{i=1}^N\left|B_{g(t)}(\sigma(s_i),d_0)\right|_{g(t)}\\
\ge\ &\sum_{i=1}^N C\left|B_{g(0)}(x,8md_0)\right|_{g(0)},
\end{split}
\end{align}
where $C=\omega_m
e^{-2^{m+7}-2C_Rd_0^2-2m(m-1)\alpha-2}(512C_S em^3\pi )^{-\frac{m}{2}}$.
Consequently, we easily see that 
\begin{align}
N\ \le\ \omega_m^{-1}e^{2^{m+7}+C_R+2m(m-1) +2} 
(288C_S em\pi)^{\frac{m}{2}}+10\ =:\ \frac{1}{4}\bar{C}_1(\bar{C}_0,C_R,m),
\end{align}
since $\alpha<1$, and we emphasize that $C_S$ is solely determined by
$\bar{C}_0$ and $m$.
\end{proof}
Now let $T_0\le T$ be the first time when some point $y\in
B_{g(0)}(x_0,\sqrt{T_0})$ sees $d_{g(T_0)}(x_0,y)\ge 10$, then applying
the estimate established by the claim with $x_0,y$, we have  
$T_0\ \ge\ 100\bar{C}_1^{-2}=:T_D(\bar{C}_0,C_R,m)$. Especially, for any
$x,y\in B_{g(0)}(x_0,\sqrt{T_D})$, we have (\ref{eqn: d0_le_dt}) holds
without any extra assumption, and for any such points,
\begin{align}\label{eqn: distance_ratio}
\forall 0<s<t\le T_D,\quad \left( \frac{s}{t}\right)^{4(m-1)\alpha}\ \le\
\frac{d_{g(t)}(x,y)}{d_{g(s)}(x,y)}\ \le\
\left(\frac{t}{s}\right)^{4(m-1)\alpha}.
 \end{align} 
 Therefore, reasoning in the same way as in \cite[Appendix]{HKRX18}, we have
\begin{align}\label{eqn: dis_dis_upper}
\forall x,y\in B_{g(0)}(x_0,\sqrt{T_D}),\quad d_{g(0)}(x,y) \le\ 
3m d_{g(t)}(x,y)^{\frac{1}{4(m-1)\alpha+1}}.
 \end{align} 

We now show the other side of the estimate following the same argument as in
\cite[Appendix]{HKRX18}. Given $x,y\in B_{g(0)}(x_0,\sqrt{T_D})$ satisfying
$d_0=d_{g(0)}(x,y)\le \sqrt{T_D}$ and given $t\le T_D$, we begin with setting 
$N:= \left\lceil d_0t^{-\frac{1}{2}}\right\rceil+1$ so that $d_0<N\sqrt{t} \le
2\sqrt{T}\le 2$. Dividing a minimal $g(0)$-geodesic $\sigma$ that realizes $d_0$
into $N$ pieces of equal length, i.e. $\left|\sigma|_{[s_i,s_{i+1}]}\right|
=N^{-1}d_0$ for $0=s_0<\ldots<s_N=1$, by the claim above we see that
\begin{align*}
\forall i=0,1,\ldots,N-1,\quad
d_{g(N^{-2}d_0^2)}(\sigma(s_i),\sigma(s_{i+1}))\ \le\ \bar{C}_1N^{-1}d_0;
\end{align*}
moerover, since $t>N^{-2}d_0^2$, by (\ref{eqn: distance_ratio}) we have 
\begin{align*}
d_{g(N^{-2}d_0^2)}(x,y)\ \ge\ \left(N^{-2}d_0^{2}t^{-1}\right)^{2(m-1)\alpha}
d_{g(t)}(x,y).
\end{align*} 
Now adding through $i=0,1,\ldots,N-1$, by these inequalities we have 
\begin{align*}
d_0\ =\ &\sum_{i=0}^{N-1}\left|\sigma|_{[s_i,s_{i+1}]}\right|\\
\ge\ &\bar{C}_1^{-1}\sum_{i=0}^{N-1}d_{g(N^{-2}d_0^2)}
(\sigma(s_i),\sigma(s_{i+1}))\\
\ge\ &\bar{C}_1^{-1}d_{g(N^{-2}d_0)}(\sigma(0),\sigma(1))\\
\ge\ &\bar{C}_1^{-1}\left(N^{-2}d_0^2t^{-1} \right)^{2(m-1)\alpha}
d_{g(t)}(x,y)\\
\ge\ &\bar{C}_1^{-1}16^{-(m-1)\alpha}d_0^{4(m-1)\alpha} d_{g(t)}(x,y),
\end{align*} 
and consequently, we have $d_{g(t)}(x,y)\le 16^m\bar{C}_1d_0^{1-4(m-1)\alpha}$.

Combining this with (\ref{eqn: dis_dis_upper}) we have the desired estimate
\begin{align*}
\forall x,y\in B_{g(0)}(x_0,\sqrt{T_D}),\ \forall t\le T_D,\quad  
C_D^{-1}d_{g(0)}(x,y)^{1+4(m-1)\alpha}\ \le\ d_{g(t)}(x,y)\ \le\
C_Dd_{g(0)}(x,y)^{1-4(m-1)\alpha},
\end{align*}
where $C_D:=\max\left\{(8m)^{4m},16^m\bar{C}_1\right\}$, only depending on
$\bar{C}_0$ and $C_R$.
\end{proof}

Arguing in the same way, if (\ref{eqn: key_spacetime_Rc}) can be assumed
globally on $M$, then we have the following 
\begin{corollary}\label{cor: dis_dis}
With the same assumptions as in Theorem~\ref{thm: dis_dis_2}, but with
(\ref{eqn: key_spacetime_Rc}) replaced by 
\begin{align}\label{eqn: dis_dis_global_Rc}
\forall t\le T,\quad \sup_M\left|\Rc_{g(t)}\right|_{g(t)}\ \le\ 2(m-1)\alpha
t^{-1},
\end{align}
then we have for any $t\le T$ and $x,y\in B_{g(0)}(x_0,5)$ satisfying
$d_{g(0)}(x,y)\le 1$,
\begin{align}\label{eqn: dis_dis_estimate}
C_D^{-1}d_{g(0)}(x,y)^{1+4(m-1)\alpha}\ \le\ d_{g(t)}(x,y)\ \le\
C_Dd_{g(0)}(x,y)^{1-4(m-1)\alpha}.
\end{align}
\end{corollary}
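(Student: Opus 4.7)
The argument will rerun the proof of Theorem \ref{thm: dis_dis_2} with the locality constraints dissolved by the global Ricci bound (\ref{eqn: dis_dis_global_Rc}). To begin, the inequality (\ref{eqn: d0_le_dt}) now holds for every pair $x,y\in M$ (not only those that stay inside $B_{g(t)}(x_0,10)$), because its derivation from \cite[Lemma 8.3]{Perelman} only invokes the Ricci bound along the minimizing geodesic, which is globally available. As a consequence, the ratio estimate (\ref{eqn: distance_ratio}) promotes to a global statement: for every $x,y\in M$ and every $0<s\le t\le T$,
\begin{align*}
\left(\tfrac{s}{t}\right)^{4(m-1)\alpha}d_{g(s)}(x,y)\ \le\ d_{g(t)}(x,y)\ \le\
\left(\tfrac{t}{s}\right)^{4(m-1)\alpha}d_{g(s)}(x,y).
\end{align*}
Letting $s\to 0$ and using the continuity of the metric at $t=0$ yields, as in the end of the proof of Theorem \ref{thm: dis_dis_2}, the lower bound $d_{g(t)}(x,y)\ge C_D^{-1}d_{g(0)}(x,y)^{1+4(m-1)\alpha}$ for the pairs $(x,y)$ in the corollary, with no further work required.

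For the upper bound on $d_{g(t)}(x,y)$, the plan is to adapt Claim \ref{clm: dis_dis_claim} to pairs $x,y\in B_{g(0)}(x_0,5)$ with $d_0:=d_{g(0)}(x,y)\le 1$ via chaining. Rather than trying to apply the claim in one shot (which would force the auxiliary ball $B_{g(0)}(x,8md_0)$ to leave $B_{g(0)}(x_0,10)$ and thereby lose the scalar curvature and doubling/Poincar\'e hypotheses), I would divide a minimal $g(0)$-geodesic $\sigma$ joining $x$ to $y$ into $N=\lceil d_0/\sqrt{T_D}\rceil$ equal pieces, choosing $T_D$ small enough that $8m\sqrt{T_D}+6\le 10$. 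Each breakpoint $\sigma(s_i)$ lies inside $B_{g(0)}(x_0,6)$, and the enlarged ball $B_{g(0)}(\sigma(s_i),8m\,d_0/N)$ is contained in $B_{g(0)}(x_0,10)$, so the local entropy estimate used in the claim (volume ratio lower bound from \cite{Foxy1808}, Sobolev constant from \cite{Saloff-Coste92}, effective monotonicity from \cite{Wang18}) applies with constants controlled by $m$, $C_R$ and $\bar{C}_0$. Summing the resulting segment-wise bounds $d_{g((d_0/N)^2)}(\sigma(s_i),\sigma(s_{i+1}))\le \bar{C}_1\,d_0/N$ and then reasoning exactly as in the chaining paragraph of the theorem's proof produces $d_{g(t)}(x,y)\le C_D d_0^{1-4(m-1)\alpha}$ for all $t\in[0,T_D]$. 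Finally, to reach the full time interval $t\in[0,T]$ one multiplies through by the harmless factor $(t/T_D)^{4(m-1)\alpha}\le (T/T_D)^{4(m-1)\alpha}$ afforded by the now-global (\ref{eqn: distance_ratio}), absorbing it into $C_D$.

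The main obstacle is the bookkeeping needed to keep every auxiliary ball that appears in the entropy argument inside $B_{g(0)}(x_0,10)$, since this is where the hypotheses on scalar curvature and on the doubling/Poincar\'e constants (and therefore the uniform Sobolev constant produced by \cite[Theorem 2.1]{Saloff-Coste92}, see Remark \ref{rmk: SC92}) are actually assumed. Ensuring this dictates the dependence of the final $T_D$ in the corollary on $m$, $\bar{C}_0$ and $C_R$, and is the reason the chaining must be performed at scale $\sqrt{T_D}$ rather than applied globally in one step; apart from this careful localization, the proof is a direct transcription of the argument for Theorem \ref{thm: dis_dis_2}.
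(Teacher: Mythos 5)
Your proposal is correct and follows essentially the same approach as the paper: the global Ricci bound makes (\ref{eqn: d0_le_dt}), and hence Claim~\ref{clm: dis_dis_claim}, unconditional for pairs in $B_{g(0)}(x_0,5)$, and one then re-runs the proof of Theorem~\ref{thm: dis_dis_2}. What you add is worth noting: the auxiliary balls $B_{g(0)}(\sigma(s_i),8m\,d_0/N)$ in the entropy argument must stay inside $B_{g(0)}(x_0,10)$, where the scalar curvature, doubling and Poincar\'e hypotheses actually live; since in the corollary $d_0$ ranges up to $1$ and $t$ up to $T$, chaining at the paper's time-dependent scale $\sqrt{t}$ would not keep these balls inside, so chaining at a fixed scale $\lesssim\sqrt{T_D}$ and then covering $(T_D,T]$ with the now-global ratio estimate (\ref{eqn: distance_ratio}) is genuinely needed — a subtlety the paper's one-sentence proof elides. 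One detail to tidy: when $d_0<\sqrt{T_D}$ (so $N=1$) the extension factor is $(t/d_0^2)^{2(m-1)\alpha}$, not $(t/T_D)^{4(m-1)\alpha}$, and using the sharp rate $2(m-1)\alpha$ (which is what the theorem's chaining computation actually employs) rather than the conservative $4(m-1)\alpha$ recorded in (\ref{eqn: distance_ratio}) is what recovers the claimed exponent $1-4(m-1)\alpha$ in that regime.
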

\begin{proof}
By (\ref{eqn: dis_dis_global_Rc}), we see that (\ref{eqn: d0_le_dt}) holds for
any $x,y\in B_{g(0)}(x_0,5)$, without any extra assumption, and thus
Claim~\ref{clm: dis_dis_claim} holds for all such points. Therefore, the rest of
the arguments follow without needing to confine ourselves in a smaller geodesic
ball.
\end{proof}

\subsection*{Acknowledgements.} Both authors would like to thank Professor
Xiaochun Rong for enlightening discussions and his warm encouragement. The
second-named author is partially supported by the General Program of the
National Natural Science Foundation of China (Grant No. 11971452) and the
research fund of USTC.

\vspace{0.7 in}

\end{document}